\documentclass[11pt]{amsart}%
\usepackage{amssymb}
\usepackage{amsmath}
\usepackage{amsfonts}
\usepackage{graphicx}
\theoremstyle{plain}

\newtheorem{definition}{Definition}

\newtheorem{lemma}{Lemma}

\newtheorem{proposition}{Proposition}
\newtheorem{remark}{Remark}

\newtheorem{theorem}{Theorem}
\newcommand{\N}{\mathbb{N}}

\newcommand{\R}{\mathbb{R}}
\newcommand{\C}{\mathbb{C}}
\numberwithin{equation}{section}
\begin{document}
\title[Real analytic approximation of Lipschitz functions]{Real analytic approximation of Lipschitz functions
on Hilbert space and other Banach spaces}
\author{}
\author{D. Azagra}
\address{ICMAT (CSIC-UAM-UC3-UCM), Departamento de An{\'a}lisis Matem{\'a}tico,
Facultad Ciencias Matem{\'a}ticas, Universidad Complutense, 28040,
Madrid, Spain}
\email{daniel\_azagra@mat.ucm.es}
\author{R. Fry}
\address{Department of Mathematics and Statistics, Thompson Rivers University,
Kamloops, B.C., Canada}
\email{Rfry@tru.ca }
\author{L. Keener}
\address{Department of Mathematics and Statistics, University of Northern British
Columbia, Prince George, B.C., Canada}
\email{keener@unbc.ca}
\subjclass{Primary 46B20}
\keywords{Real analytic, approximation, Lipschitz function, Banach
space. The first named author was partly supported by grant
Santander-Complutense 34/07-15844. The second named author was
partly supported by NSERC (Canada).}

\begin{abstract}
Let $X$ be a separable Banach space with a separating polynomial.
We show that there exists $C\geq 1$ (depending only on $X$) such
that for every Lipschitz function $f:X\rightarrow\mathbb{R}$, and
every $\varepsilon>0$, there exists a Lipschitz, real analytic
function $g:X\rightarrow\mathbb{R}$ such that $|f(x)-g(x)|\leq
\varepsilon$ and $\textrm{Lip}(g)\leq  C\textrm{Lip}(f)$. This
result is new even in the case when $X$ is a Hilbert space.
Furthermore, in the Hilbertian case we also show that $C$ can be
assumed to be any number greater than $1$.
\end{abstract}

\maketitle

\section{Introduction and main results}

Not much is known about the natural question of approximating
functions by real analytic functions on a real Banach space $X$.
When $X$ is finite dimensional, a famous paper of Whitney's
\cite{W} provides a completely satisfactory answer to this
problem: a combination of integral convolutions with Gaussian
kernels and real analytic approximations of partitions of unity
allows to show that for every $C^k$ function $f:\R^n\to\R^m$ and
every continuous $\varepsilon:\R^n\to (0, +\infty)$ there exists a
real analytic function $g$ such that $\|D^{j}g(x)-D^{j}f(x)\|\leq
\varepsilon(x)$ for all $x\in\R^n$ and $j=1, ..., k$.

In an infinite dimensional Banach space $X$, the lack of a
translation invariant measure which assigns finite, strictly
positive volume to balls makes this question much more difficult
to answer, as integral convolutions cannot be directly used in
order to regularize a given function. By constructing a real
analytic approximation of a partition of unity, Kurzweil was able
to show in his classic paper \cite{K} that for every Banach space
$X$ having a separating polynomial, for every Banach space $Y$,
and for every continuous function $f:X\to Y$, $\varepsilon:X\to
(0, +\infty)$, there exists a real analytic function $g:X\to Y$
such that $\|f(x)-g(x)\|\leq\varepsilon(x)$ for all $x\in X$.

In the special case of norms (or more generally convex funtions)
defined on a Banach space $X$, Deville, Fonf and Hajek \cite{DFH1,
DFH2}, by introducing specific formulae suited to this problem,
have proved two important results: first, in $l_{p}$ or $L_{p},$
with $p$ an even integer, any equivalent norm can be uniformly
approximated on bounded sets by analytic norms. Second, in
$X=c_{0}$ or $X=C\left( K\right),$ with $K$ a countable compact,
any equivalent norm can be uniformly approximated by analytic
norms on $X\backslash\left\{ 0\right\} .$

These results leave at least three important questions open.
Question one, is Kurzweil's result improvable? Is it necessary for
a Banach space $X$ to have a separating polynomial in order to
enjoy the property that every continuous function defined on $X$
can be uniformly approximated by real analytic functions? Fry and,
independently, Cepedello and Hajek have proved that every
uniformly continuous function defined on $c_0$ (which fails to
have a separating polynomial) \cite{F4} and, more generally, a
Banach space with a real analytic separating function \cite{CH},
can be uniformly approximated by real analytic functions. However,
the approximating functions they construct (again, refining
Kurzweil's technique, by employing a real analytic approximation
of a partition of unity) are not uniformly continuous, and in any
case the problem of approximating continuous functions (not
uniformly continuous) defined on $c_0$ by real analytic functions
remains open.

Question two. Given a $C^k$ function $f$ defined on a Banach space
$X$ with a separating polynomial, and a continuous
$\varepsilon:X\to (0, +\infty)$, is it possible to find a real
analytic function $g$ on $X$ such that
$\|D^{j}f-D^{j}g\|\leq\varepsilon$ for $j=0,1,..., k$? Nothing is
known about this problem, even in the case $k=1$ and $X$ being a
Hilbert space. It is worth mentioning that if we replace real
analytic with $C^\infty$ and consider only $k=1$, then the
question has an affirmative answer, which was found by Moulis
\cite{M} in the cases $X=\ell_p, c_0$ and with range any Banach
space (although her proof can be adapted to every Banach space
with an unconditional basis and a $C^\infty$ smooth bump function,
see \cite{AFGJL}), and by Hajek and Johanis \cite{HJ} very
recently for certain range spaces in the far more general case of
a separable Banach space $X$ with a $C^\infty$ smooth bump
function. And, interestingly enough, in both cases the solution
came as a corollary of a theorem on approximation of Lipschitz
functions by more regular Lipschitz functions. This leads us to
the following natural question, which is the subject matter of the
present paper.

Question three. Is it possible to approximate a Lipschitz function
$f$ defined on a Banach space $X$ having a separating  polynomial
by Lipschitz, real analytic functions $g$? And is it possible to
have the Lipschitz constants of the functions $g$ be of the order
of the Lipschitz constant of $f$ (that is $\textrm{Lip}(g)\leq
C\textrm{Lip}(g)$, where $C\geq 1$ is independent of $f$)? In such
case, does this hold for any number $C$ greater than one (that is,
is it possible to get real analytic approximations that almost
preserve the Lipschitz constant of the given function)? It is
worth noting that every approximation method based on constructing
real analytic approximations of partitions of unity in infinite
dimensions (such as \cite{K, F4, CH, J}) cannot work to give a
solution to this problem, the reason being that in order to make
the supports of the functions forming the partition locally finite
one has to give up the idea of those functions having a common
Lipschitz constant. That is, even if we consider $C^p$ smooth
functions instead of real analytic functions in this question, the
standard approximation technique based on the use of $C^p$ smooth
partitions of unity does not provide a solution to the problem.

A partial answer to this problem can be obtained by combining the
Deville-Fonf-Hajek results on real analytic approximation of
convex functions with the following theorem of M. Cepedello-Boiso
\cite{C}: a Banach space is superreflexive if and only if every
Lipschitz function can be approximated by differences of convex
functions which are bounded on bounded sets, uniformly on bounded
sets. It follows that, for $X=\ell_{p}$ or $L_{p}$, with $p$ an
even integer, and for every Lipschiz function $f:X\to\R$ there
exists a sequence of real analytic functions $g_n:X\to\R$ which
are Lipschitz on bounded sets, and such that
$\lim_{n\to\infty}g_n=f$, uniformly on bounded sets. However, this
method has two important disadvantages: 1) the approximation
cannot be made to be uniform on $X$, and 2) even on a fixed
bounded set $B$, we lose control on the Lipschitz constants of the
approximations $g_n$: in fact, one has
$\textrm{Lip}({g_{n}}_{|_{B}})\to \infty$ as $n\to\infty$.

A successful new approach to the question of approximating
Lipschitz functions on infinite-dimensional Banach spaces was
discovered by Fry in \cite{F1}, with the introduction of what one
can call {\em sup- partitions of unity} (see the following section
for a definition), a tool which has been thoroughly exploited in
\cite{HJ}.

Very recently, Fry and Keener \cite{FK} have constructed a real
analytic approximation of a sup partition of unity and have
employed it to show that, for every Banach space $X$ having a
separating polynomial, for every bounded open subset $U$ of $X$,
for every bounded Lipschitz function $f:U\to\R$, and for every
$\varepsilon>0$ there exists a Lipschitz, real analytic function
$g:U\to\R$ such that $|f-g|\leq\varepsilon$.

A disadvantage of the sup partition approach to approximation is
that it only works for bounded functions. In this paper we will
simplify the construction in \cite{FK}, and we will combine this
with some new tools in order to eliminate these restrictions and
obtain the following.

\begin{theorem}\label{analytic Lip approximation}
Let $X$ be a separable Banach space which admits a separating
polynomial. Then there exists a number $C\geq1$ such that for
every Lipschitz function $f:X\rightarrow\mathbb{R}$, and for every
$\varepsilon>0$ there exists a Lipschitz, real analytic function
$g:X\rightarrow \mathbb{R}$ such that $|f(x)-g(x)|\leq\varepsilon$
for all $x\in X$, and $\text{Lip}(g)\leq C\text{Lip}(f)$.
\end{theorem}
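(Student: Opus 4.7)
The overall strategy I would follow is to represent $f$ as a supremum of a countable family of ``cone'' functions anchored at a dense set of basepoints, then to replace the supremum by a real analytic \emph{log-sum-exp} averaging and the norm-cones by real analytic functions manufactured from the separating polynomial. This keeps us inside the sup-partition philosophy pioneered in \cite{F1, FK, HJ}, while avoiding the usual partitions of unity that destroy Lipschitz control.

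\emph{Step 1 (sup-cone representation).} Set $L=\textrm{Lip}(f)$. Any $L$-Lipschitz function on a metric space satisfies $f(x)=\sup_{y\in X}\bigl(f(y)-L\|x-y\|\bigr)$, and choosing a sufficiently dense countable sequence $\{y_n\}\subset X$ one gets an $L$-Lipschitz function $F(x)=\sup_{n}\bigl(f(y_n)-L\|x-y_n\|\bigr)$ with $|f-F|\leq \varepsilon/3$ everywhere.

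\emph{Step 2 (analytic surrogate for the norm).} Using the separating polynomial $p$ of $X$ I would build, for each $n$, a real analytic $u_n:X\to\R$ such that $u_n(y_n)\leq\varepsilon/3$, $u_n(x)\geq L\|x-y_n\|-\varepsilon/3$, and $\textrm{Lip}(u_n)\leq C_0L$ for a constant $C_0\geq 1$ depending only on $X$. A natural template is $u_n(x)=\Phi\bigl(p(\lambda_n(x-y_n))\bigr)$ for a carefully chosen entire function $\Phi$ that tames the super-linear growth of $p$ into essentially linear growth outside a small scale, together with a suitable rescaling of the argument; in the Hilbert case one may simply use an analytic modification of $L\sqrt{\|x-y_n\|^2+\delta_n^2}$ (the square root being obtained via an analytic branch after shifting into the domain of convergence of the binomial series).

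\emph{Step 3 (analytic soft-supremum).} Aggregate the local cones into
\[
g(x)=\frac{1}{t}\log\sum_{n=1}^{\infty}\exp\!\bigl(t(f(y_n)-u_n(x))\bigr),
\]
with $t$ large. For fixed $x$, $u_n(x)\to +\infty$ as $n\to\infty$ (as $\|y_n\|$ eventually exceeds any ball), while $|f(y_n)|$ grows at most linearly; hence the series converges locally uniformly, and the complex extension of each summand converges on a common complex neighbourhood, so $g$ is real analytic on $X$. Since $\nabla g$ is a probability-weighted average of the gradients of the $L$-Lipschitz-like functions $f(y_n)-u_n(\cdot)$, one gets $\textrm{Lip}(g)\leq C_0\,L$. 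Log-sum-exp over-estimates the supremum by at most $t^{-1}\log(\#\{n:u_n(x)\ \text{small at }x\})$, which is controlled because the $y_n$'s can be taken to form a locally finite-in-density net; choosing $t$ large yields $|g-F|\leq\varepsilon/3$, and hence $|f-g|\leq\varepsilon$.

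\emph{Main obstacle and the Hilbert refinement.} The delicate step is Step~2: manufacturing, from a polynomial that is only bounded below away from $0$ and grows like $\|\cdot\|^d$ with $d\geq 2$, a real analytic function that behaves linearly with the sharp constant $L$. This is where I expect the ``new tools'' promised after \cite{FK} must enter, probably via composing $p$ with an entire function whose derivative is essentially bounded by $1$ (an error-function-type primitive) and then optimizing the rescaling $\lambda_n$ against the local density of the net $\{y_n\}$. The same construction, specialised to $X$ Hilbert with $p(x)=\|x\|^2$, should give $C_0$ arbitrarily close to $1$: the relevant inequalities then reduce to elementary one-dimensional estimates on analytic smoothings of $|t|$, and the loss in the Lipschitz constant can be absorbed into $\delta_n$ and the softmax parameter $t$.
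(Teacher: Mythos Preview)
Your proposal has a genuine gap at Step~3 that is fatal in infinite dimensions. You claim that for fixed $x$ one has $u_n(x)\to+\infty$ because ``$\|y_n\|$ eventually exceeds any ball''. But $\{y_n\}$ was chosen dense in $X$ (otherwise Step~1 fails), so $\|y_n\|$ does \emph{not} tend to infinity; infinitely many $y_n$ lie in every ball $B(x,\delta)$. For those indices $u_n(x)\approx L\|x-y_n\|\leq L\delta$ and $f(y_n)-u_n(x)\geq f(x)-2L\delta$, so the series $\sum_n \exp\bigl(t(f(y_n)-u_n(x))\bigr)$ has infinitely many terms bounded below by a fixed positive constant and diverges. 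The appeal to ``a locally finite-in-density net'' cannot rescue this: in an infinite-dimensional space no $\delta$-net of a ball is finite, so any countable $\varepsilon$-dense set has infinitely many points in every open set. This is precisely the obstruction that makes a uniform soft-max over a dense family of cones unusable beyond finite dimensions, and it is the reason the paper does not take this route.

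The paper confronts exactly this issue. Its analytic sup-partition functions $\varphi_n$ are not cones centered at $x_n$ alone; each $\varphi_n$ is a Gaussian convolution of a bump whose support $A_n\subset\ell_\infty^n$ encodes the position of $x$ relative to \emph{all} earlier basepoints $x_1,\dots,x_{n-1}$. This priority mechanism forces $\varphi_n(x)$ to be small once $x$ has already been captured by some earlier $D_Q(x_j,r)$, and together with the growing parameters $\kappa_n$ yields the decay $|\widetilde\varphi_n(x+z)|\leq 1/(n!\,a_x^{\,n}M_n)$ (property~(5) of Lemma~\ref{construction of the sup partition of unity}); this is what makes $x\mapsto\{\varphi_n(x)\}$ take values in $c_0$ and the Preiss-norm quotient converge and extend holomorphically. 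The passage from the model case $f:X\to[0,1]$ to bounded $f$ and then to unbounded $f$ also requires the separate ``gluing tube'' (Lemma~\ref{tube}) and ``crown'' arguments, for which your outline offers no substitute.
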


As a matter of fact the proof of this result works for any Banach
space $X$ having a separating function with a Lipschitz
holomorphic extension to a uniformly wide neighborhood of $X$ in
the complexification $\widetilde{X}$.

\begin{definition}
A separating function $Q$ on a Banach space $X$ is a function
$Q:X\to [0, +\infty)$ such that $Q(0)=0$ and there exists $M, m>0$
such that $Q(x)\geq m\|x\|$ whenever $\|x\|\geq M$.
\end{definition}
\noindent It is clear that $X$ has a real analytic Lipschitz
separating function (with a holomorphic extension to a uniformly
wide neighborhood of $X$ in $\widetilde{X}$) if and only if $X$
has a real analytic Lipschitz function $Q$ which satisfies the
above definition with $M=m=1$ (and with a holomorphic extension to
a uniformly wide neighborhood of $X$ in $\widetilde{X}$). So we
also have the following.

\begin{theorem}\label{characterization of analytic Lip approximation}
Let $X$ be a separable Banach space having a separating function
$Q:X\to [0, \infty)$ with a Lipschitz holomorphic extension
$\widetilde{Q}$ defined on a set of the form
$\{x+iy\in\widetilde{X} : x, y\in X, \|y\|<\delta\}$ for some
$\delta>0$. Then there exists a number $C\geq1$ such that for
every Lipschitz function $f:X\rightarrow\mathbb{R}$, and for every
$\varepsilon>0$ there exists a Lipschitz, real analytic function
$g:X\rightarrow \mathbb{R}$ such that $|f(x)-g(x)|<\varepsilon$
for all $x\in X$, and $\text{Lip}(g)\leq C\text{Lip}(f)$.
\end{theorem}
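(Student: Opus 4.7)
The plan is to prove Theorem~\ref{characterization of analytic Lip approximation} by extending the bounded-domain construction of Fry--Keener \cite{FK} to all of $X$. The central devices are a Lipschitz, real analytic substitute for the norm built out of $\widetilde{Q}$, and a log-sum-exp soft-max that converts a discrete sup-representation of $f$ into a single real analytic function.

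After normalizing so that $f(0)=0$ and $\operatorname{Lip}(f)=1$, I may, as the paper already remarks, assume $Q$ is itself real analytic and $K$-Lipschitz with $Q(x)\geq\|x\|$ outside the unit ball, and that $\widetilde{Q}$ is $K$-Lipschitz on $\{x+iy:\|y\|<\delta\}$. Starting from the identity $f(x)=\sup_{y\in X}\{f(y)-\|x-y\|\}$, I would choose a countable dense $\rho$-net $\{x_j\}_{j\in\N}\subset X$ with $\rho$ small relative to $\varepsilon$, pick scalars $\lambda,\mu>0$ with $\lambda\mu\geq 1$, and consider
\[
h(x):=\sup_j\bigl\{f(x_j)-\lambda Q(\mu(x-x_j))\bigr\}.
\]
Because $Q(y)\geq\|y\|$ outside the unit ball and the net is $\rho$-dense, $h$ lies below $f$ everywhere, agrees with $f$ to within a preassigned error at each net point, and is $\lambda\mu K$-Lipschitz.

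The final approximation is the analytic soft-max
\[
g(x)=\frac{1}{t}\log\sum_{j}w_j\exp\bigl(t[f(x_j)-\lambda Q(\mu(x-x_j))]\bigr),
\]
for suitable positive weights $w_j$ with $\sum w_j<\infty$ and $t$ large. I would then verify (i) that the series converges absolutely and uniformly on a complex strip $\{x+iy:\|y\|<\delta'\}$ of positive width, so $g$ is the restriction to $X$ of a bounded holomorphic function, hence real analytic; (ii) that $|h(x)-g(x)|<\varepsilon/2$ for $t$ large; and (iii) that $\operatorname{Lip}(g)\leq\lambda\mu K=:C$ via the standard convex-combination bound
$\nabla g(x)=\bigl(\sum_j w_j e^{t u_j}\nabla u_j\bigr)/\bigl(\sum_j w_j e^{t u_j}\bigr)$
applied to $u_j(x):=f(x_j)-\lambda Q(\mu(x-x_j))$, combined with $\|\nabla u_j\|\leq \lambda\mu K$.

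The chief technical obstacle is step (i): in an infinite-dimensional $X$ the number of net points in a fixed ball is generally not finite, so the complexified series $\sum_j w_j\exp(-t\lambda\widetilde{Q}(\mu(x+iy-x_j)))$ need not converge without further structure. The weights $w_j$ provide the needed summability, but they shift the effective sup in the previous step by $\tfrac{1}{t}\log w_j$, which is only negligible if the net is calibrated so that for every $x\in X$ there exists an index $j$ with both $\|x-x_j\|\leq\rho$ and $|\log w_j|\leq t\varepsilon$. This forces an adaptive net, constructed by taking finer and finer $\rho$-nets of the annular regions $B(0,n+1)\setminus B(0,n-1)$ and enumerating them with indices whose weights $w_j$ are tuned to the holomorphic growth of $\widetilde{Q}$ on each annulus. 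Carrying out this calibration so that the final $C=\lambda\mu K$ depends only on $K$ and $\delta$ (hence only on $X$), and not on $f$ or $\varepsilon$, is the core of the argument.
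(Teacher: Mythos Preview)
Your approach via a global log-sum-exp is genuinely different from the paper's route (which builds an analytic sup-partition of unity, feeds it through the Preiss norm on $c_0$, and then uses a ``gluing tube'' and a ``crown'' partition to pass from bounded to unbounded $f$). Unfortunately, the obstacle you flag in step (i) is fatal in infinite dimensions, not merely technical.

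The calibration you describe cannot be carried out. You need, for every $x\in X$, an index $j$ with $\|x-x_j\|\leq\rho$ and $|\log w_j|\leq t\varepsilon$; equivalently, the set $\{x_j: w_j\geq e^{-t\varepsilon}\}$ must be $\rho$-dense in $X$. But $\sum_j w_j<\infty$ forces this set to be finite, and no finite set is $\rho$-dense in an unbounded $X$. Restricting attention to a single ball does not help either: in an infinite-dimensional space any $\rho$-net of a ball is already infinite, so any enumeration of net points in $B(0,1)$ uses indices with arbitrarily small $w_j$, and for $x$ near those points the lower bound $g(x)\geq u_j(x)+t^{-1}\log w_j$ is useless. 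Your proposed fix of distributing net points over annuli $B(0,n+1)\setminus B(0,n-1)$ changes nothing, since each annulus by itself needs infinitely many net points. Dropping the weights ($w_j\equiv 1$) is no better: density of $\{x_j\}$ forces infinitely many $x_j$ inside every ball $B(x,r)$, so $\sum_j\exp(-c\|x-x_j\|)$ diverges and your series for $g$ does not even converge on $X$, let alone on a complex strip.

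What the paper does instead is construct functions $\varphi_n$ (via integral convolutions in the finite-dimensional blocks $\ell_\infty^n$, precomposed with $(Q(\cdot-x_1),\dots,Q(\cdot-x_n))$) so that for each $x$ one has $\varphi_m(x)>1$ for some $m$ determined by the geometry, while $\{\varphi_n(x)\}_n\in c_0$ with explicit, locally uniform decay on a complex strip. The approximation $g(x)=\lambda(\{f(x_n)\varphi_n(x)\})/\lambda(\{\varphi_n(x)\})$, with $\lambda$ an analytic norm on $c_0$, then plays the role of your soft-max without any external summable weights. The essential point is that the ``weight'' attached to the $n$-th net point is not a fixed scalar $w_n$ but the function $\varphi_n(x)$, which is large precisely when $x$ is near $x_n$ and small otherwise; this is exactly the $x$-dependent adaptivity that a single summable sequence $(w_j)$ cannot provide.
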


We will prove (see Lemma \ref{separating function Q lemma} below)
that every Banach space with a separating polynomial (that is a
polynomial $P:X\to\R$ such that $P(0)=0<\inf\{P(x) : \|x\|=1\}$)
also has a Lipschitz, real analytic separating function with a
holomorphic extension to a uniformly wide neighborhood of $X$ in
$\widetilde{X}$. We do not know if the converse is true. A natural
related question is: does the space $c_0$ have such a Lipschitz
separating function?

By using Theorem \ref{analytic Lip approximation} and part of its
proof (namely Lemma \ref{construction of the sup partition of
unity} below) one can also prove the following.
\begin{theorem}\label{derivative approximation}
Let $X$ be a separable Banach space which admits a separating
polynomial. Let $f:X\rightarrow\mathbb{R}$ be bounded, Lipschitz,
and $C^{1}$ with uniformly continuous derivative. Then for each
$\varepsilon>0,$ there exists a real analytic function
$g:X\rightarrow\mathbb{R}$ with $\left\vert g-f\right\vert
<\varepsilon$ and $\left\Vert g^{\prime}-f^{\prime}\right\Vert
<\varepsilon.$
\end{theorem}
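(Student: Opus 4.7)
The plan is to combine the machinery behind Theorem \ref{analytic Lip approximation} with a standard Taylor-polynomial patching argument. The $C^1$ hypothesis with uniformly continuous derivative is used to ensure that $f$ is well-approximated locally by its affine first-order Taylor polynomials (which are automatically real analytic and have constant derivative), and then the real analytic sup-partition of unity from Lemma \ref{construction of the sup partition of unity} is used to glue these local affine pieces together in a real analytic fashion that preserves derivative information.

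First I would use uniform continuity of $f'$ to fix $\eta>0$ so small that $\|f'(x)-f'(y)\|<\varepsilon/4$ whenever $\|x-y\|\leq 2\eta$. I then choose a countable $\eta$-dense set $\{x_n\}\subset X$ and set $L_n(x):=f(x_n)+f'(x_n)(x-x_n)$; each $L_n$ is affine and real analytic, $L_n'\equiv f'(x_n)$, and the mean value theorem together with the modulus of continuity of $f'$ gives $|L_n(x)-f(x)|\lesssim \varepsilon\eta$ and $\|L_n'(x)-f'(x)\|<\varepsilon/4$ throughout $B(x_n,2\eta)$. Next I invoke Lemma \ref{construction of the sup partition of unity} to produce real analytic functions $\varphi_n:X\to[0,1]$ subordinated to the cover $\{B(x_n,\eta)\}$ in the sup-partition sense, with quantitative tail decay and controlled derivatives, and form $\omega_n$ from $\varphi_n$ by a real analytic normalization (for instance through an analytic approximation of the max, in the spirit of \cite{FK}) so that $\sum_n \omega_n\equiv 1$ up to a negligible error.

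Define the approximation by
\[
g(x):=\sum_n \omega_n(x)\, L_n(x).
\]
The uniform estimate $|g-f|<\varepsilon$ follows at once from $|L_n-f|\lesssim\varepsilon\eta$ on the effective support of $\omega_n$. For the derivative estimate I use the identity
\[
g'(x)=\sum_n \omega_n'(x)\bigl(L_n(x)-f(x)\bigr)+\sum_n \omega_n(x)\, f'(x_n),
\]
exploiting $\sum_n \omega_n'\equiv 0$. The second sum lies within $\varepsilon/4$ of $f'(x)$ by the choice of $\eta$, since only $\omega_n$ with $x$ close to $x_n$ contribute appreciably, and $f'(x_n)\approx f'(x)$ for such $n$. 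The first sum is small by the Taylor bound on $L_n-f$ weighted against the controlled $\omega_n'$.

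The principal obstacle is controlling the tail contributions to both sums. Because real analytic functions on an infinite-dimensional $X$ cannot be compactly supported, every $\omega_n$ contributes at every point, and when $x$ lies far from $x_n$ the difference $|L_n(x)-f(x)|$ grows linearly in $\|x-x_n\|$ (using boundedness and the Lipschitz constant of $f$). Thus the tail terms $\omega_n'(x)(L_n(x)-f(x))$ are bounded only if the analytic sup-partition has sufficiently fast (essentially exponential in the separating polynomial) decay of both $\omega_n$ and $\omega_n'$ away from their nominal supports, together with summability of $\sum_n \|\omega_n'(x)\|$ uniformly in $x$. Verifying this quantitative decay — which ultimately rests on the structure provided by the separating polynomial together with the boundedness of $f$ to handle the linearly-growing factor $L_n-f$ — is where the real work lies, and is exactly what the sup-partition of Lemma \ref{construction of the sup partition of unity} is designed to supply.
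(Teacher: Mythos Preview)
Your overall strategy---local affine Taylor pieces glued by a real analytic partition---is the right Moulis-type idea, and you correctly identify the tail control as the crux (note in particular that the factor $M_n=(1+\|x_n\|)$ in property~(5) of Lemma~\ref{construction of the sup partition of unity} is there precisely to absorb the linear growth of $|L_n(x)-f(x)|$, and Cauchy estimates on the holomorphic extensions transfer this decay to $\varphi_n'$). However, there is a genuine gap in your normalization step. You write that you will produce $\omega_n$ with ``$\sum_n\omega_n\equiv1$ up to a negligible error'' and then invoke the \emph{exact} identity $\sum_n\omega_n'\equiv0$. These are inconsistent: a small $C^0$ error in $\sum_n\omega_n$ gives no control whatsoever on the derivative of that error, and without $\sum_n\omega_n'=0$ your decomposition of $g'$ acquires an extra term $f\cdot(\sum_n\omega_n)'$ that you have not bounded. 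Your parenthetical suggestion---``an analytic approximation of the max, in the spirit of \cite{FK}''---points at the sup-partition quotient $\lambda(\{a_n\varphi_n\})/\lambda(\{\varphi_n\})$, which is precisely \emph{not} a collection of weights summing to~$1$; it is a single scalar, and its derivative has no obvious reason to track $f'$. If instead you mean an exact normalization such as $\omega_n=\varphi_n^{2}/\sum_m\varphi_m^{2}$, then $\sum_n\omega_n\equiv1$ holds identically and your identity is valid, but you must then verify that $\sum_m\varphi_m^{2}$ is real analytic, bounded away from zero, and that the quotient preserves the needed tail decay of both $\omega_n$ and $\omega_n'$---none of which you have argued.

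For comparison, the paper does not give a self-contained proof here but indicates that the argument uses \emph{both} Lemma~\ref{construction of the sup partition of unity} and Theorem~\ref{analytic Lip approximation}. Your sketch uses only the former. The role of Theorem~\ref{analytic Lip approximation} in the intended proof is therefore an ingredient you are missing; a plausible reading is that the Lipschitz approximation result is used either to regularize an auxiliary Lipschitz datum built from $f'$, or to reduce to a situation where the coefficients fed into the sup-partition machinery stay within a fixed bounded range (as the quotient formula requires). In any case, the sup-partition of Lemma~\ref{construction of the sup partition of unity} is not used in the paper as a classical normalized partition of unity, so the passage from $\{\varphi_n\}$ to $\{\omega_n\}$ is where your proposal needs substantive work.
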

\noindent We give a detailed proof of this result in \cite{AFK3}.

\medskip

Finally, by combining the Lasry-Lions sup-inf convolution
regularization technique \cite{LaLi} with the preceding Theorem
and with some of the techniques developed for the proof of Theorem
\ref{analytic Lip approximation}, we will obtain the following
improvement of our main result in the most important case; namely,
if $X$ is a Hilbert space, then the real analytic approximations
$g$ can be made to almost preserve the Lipschitz constant of the
given function $f$.

\begin{theorem}\label{Lip approximation on Hilbert space}
Let $X$ be a separable Hilbert space, and $f:X\to\mathbb{R}$ a
Lipschitz function. Then for every $\varepsilon>0$ there exists a
Lipschitz, real analytic function $g:X\rightarrow \mathbb{R}$ such
that $|f(x)-g(x)|\leq \varepsilon$ for all $x\in X$, and
$\text{Lip}(g)\leq \text{Lip}(f)+\varepsilon$.
\end{theorem}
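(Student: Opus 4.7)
The plan is to combine the Lasry--Lions sup-inf convolution regularization, available on a Hilbert space, with a simultaneous $C^{0}$-$C^{1}$ real analytic approximation of the resulting smooth function. The reason Theorem \ref{analytic Lip approximation} alone is not enough is that it only produces $\text{Lip}(g)\leq C\,\text{Lip}(f)$ with $C>1$ absolute; to sharpen $C$ down to $1+\varepsilon$ one first replaces $f$ by a $C^{1,1}$ function of the \emph{same} Lipschitz constant, and then approximates that function in the $C^{1}$ topology.

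Step one (Lasry--Lions). Fix $\varepsilon>0$, write $L=\text{Lip}(f)$, and for $0<2\mu<\lambda$ set
\[
h(x)=\sup_{z\in X}\inf_{y\in X}\left\{ f(y)+\frac{1}{2\lambda}\|z-y\|^{2}-\frac{1}{2\mu}\|x-z\|^{2}\right\}.
\]
On a Hilbert space this function is $C^{1,1}$, converges uniformly to $f$ as $\mu,\lambda\to 0^{+}$, and satisfies $\text{Lip}(h)\leq L$ because inf- and sup-convolution never increase the Lipschitz constant. In particular $\|h'(x)\|\leq L$ for every $x$, and $h'$ is Lipschitz, hence uniformly continuous, with a modulus depending only on $\mu$. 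Choose $\mu,\lambda$ small enough that $\|h-f\|_{\infty}<\varepsilon/3$.

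Step two (simultaneous analytic approximation of $h$). Produce a real analytic $g:X\to\mathbb{R}$ with $\|g-h\|_{\infty}<\varepsilon/3$ and $\|g'-h'\|_{\infty}<\varepsilon$. Theorem \ref{derivative approximation} asserts exactly this, but only for bounded $f$; since the Lasry--Lions regularization $h$ is only Lipschitz and generally unbounded, one has to reopen the proof of Theorem \ref{derivative approximation} from \cite{AFK3}. That proof patches the affine local models $y\mapsto h(x_{n})+h'(x_{n})(y-x_{n})$ via the real analytic sup partition of unity supplied by Lemma \ref{construction of the sup partition of unity}, and controls the error at each patch by the local quadratic estimate
\[
|h(y)-h(x_{n})-h'(x_{n})(y-x_{n})|\leq K\|y-x_{n}\|^{2},
\]
where $K$ is the Lipschitz constant of $h'$. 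Since this estimate is purely local and the sup partition is locally finite and bounded in supremum norm on each support, boundedness of $h$ plays no essential role, so the argument should extend verbatim. This is the step I expect to require the most care.

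Step three (conclusion). Combining the two approximations,
\[
|g(x)-f(x)|\leq|g(x)-h(x)|+|h(x)-f(x)|<\tfrac{2\varepsilon}{3}<\varepsilon,
\]
and, since $g$ is real analytic on the Hilbert space $X$,
\[
\text{Lip}(g)=\sup_{x\in X}\|g'(x)\|\leq\sup_{x\in X}\|h'(x)\|+\|g'-h'\|_{\infty}\leq L+\varepsilon,
\]
as required. The only genuine obstacle is thus Step two: adapting Theorem \ref{derivative approximation} to unbounded $C^{1,1}$ Lipschitz functions. The sketch above indicates why this adaptation should be routine, but carefully tracking the simultaneous $C^{0}$ and $C^{1}$ errors through the sup-partition construction is where the bulk of the work lies.
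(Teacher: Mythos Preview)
Your Steps 1 and 3 coincide with the paper's: Lasry--Lions regularization produces an $L$-Lipschitz $C^{1,1}$ function $h$ uniformly close to $f$, and any real analytic $g$ that is simultaneously $C^{0}$- and $C^{1}$-close to $h$ finishes the argument. The divergence, and the gap, is in Step 2. The paper applies Theorem~\ref{derivative approximation} only to \emph{bounded} $h$, obtaining first the case $f:X\to[0,M]$ (Proposition~\ref{almost preserving Lip approx in case of a function taking values in 01}). It then cuts a general bounded $f$ into $[0,1]$-valued pieces $f_{n}$, applies that proposition to each, and glues the resulting $(1+\varepsilon)$-Lipschitz analytic approximations via the tube diffeomorphism $H$ of Lemma~\ref{tube}, whose derivative is itself bounded by $1+\varepsilon$; this yields the bounded case with a holomorphic-extension domain independent of $\|f\|_{\infty}$. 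Only then is the crown argument of Section~5.6 invoked to reach unbounded $f$. The tube and crown constructions are therefore not scaffolding one can discard---they are precisely the mechanism that carries the sharp Lipschitz bound from bounded to unbounded targets.

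Your proposed shortcut is to extend Theorem~\ref{derivative approximation} directly to unbounded $h$, arguing that the quadratic Taylor estimate is local so boundedness ``plays no essential role''. The difficulty you are overlooking is not the $C^{0}$/$C^{1}$ error bookkeeping but the \emph{real analyticity} of the output. The sup-partition formula has the shape $g(x)=\lambda(\{\alpha_{n}\varphi_{n}(x)\})/\lambda(\{\varphi_{n}(x)\})$ with $\lambda$ the Preiss norm, and its holomorphic extension is obtained via the implicit function theorem (Proposition~\ref{refinement of CHP}, Lemma~\ref{holomorphic extension of the composition of lambda with Phi}); the domain of that extension depends explicitly on a uniform bound $1\leq\alpha_{n}\leq 1001$ for the coefficients. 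When $h$ is unbounded, the data $h(x_{n})$ entering the construction are unbounded and the holomorphic domain collapses as $x$ moves out in $X$---the very phenomenon the Remark at the end of Section~5.6 isolates, and the reason Theorem~\ref{sharp version of derivative approximation} carries an explicit hypothesis $\|f\|_{\infty}\leq M$. Indeed, the authors state there that $C^{1}$-fine real analytic approximation of unbounded $C^{1}$ functions is a \emph{conjecture}. So your Step~2, far from being routine, presupposes a result stronger than what the paper proves; you still need the gluing tube and crown machinery (or some substitute) to get from the bounded case to the general one.
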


\bigskip

\section{A brief outline of the proof}

The proof of our main result is rather long, and very technical at
some points so, for the reader's convenience, we will next explain
the main ideas of our construction  (which we here intentionally
oversimplify in order not to be burdened by important, but not
very meaningful precision and notation).

As said in the introduction, we will show in Lemma \ref{separating
function Q lemma} that every Banach space $X$ with a separating
polynomial $p$ of degree $n$ has a Lipschitz, real analytic
separating function. This is done as follows: such a Banach space
always has a $2n$-degree homogeneous polynomial $q$ such that
$\Vert x\Vert ^{2n}\leq q(x)\leq K\Vert x\Vert^{2n}$ for all $x\in
X$. Then the function $Q:X\rightarrow \lbrack0,+\infty)$ defined
by
\[
Q(x)=\left(  1+q(x)\right)  ^{\frac{1}{2n}}-1
\]
is real analytic, Lipschitz, and separating.

The next step will be taking a dense sequence $\{x_n\}$ in $X$ and
constructing an equi-Lipschitz, real analytic analogue of a sup
partition of unity $\{\varphi_n\}_{n\in\N}$ which is subordinated
to the covering $X=\bigcup_{n=1}^{\infty}D_{Q}(x_{n}, 4)$, where
$D_{Q}(x_n, 4)=\{x\in X : Q(x-x_n)<4\}$. By this we mean a
collection of real analytic functions $\varphi_n :X\to [0,2]$,
with holomorphic extensions $\widetilde{\varphi}_n$ defined on an
open neighborhood $\widetilde{V}$ of $X$ in the complexification
$\widetilde{X}$ of $X$, such that:
\begin{enumerate}
\item The collection $\{\varphi_{n} :X\to[0,2] \, | \, n\in\mathbb{N}\}$ is
equi-Lipschitz on $X$, with Lipschitz constant $L=2\text{Lip}(Q)$.
\item For each $x\in X$ there exists $m=m_{x}\in\mathbb{N}$
with $\varphi_{m}(x)>1$.
\item For every $x\in X$ the set $\{n\in\N :
\varphi_n(x)>\varepsilon\}$ is finite.
\item $0\leq\varphi_{n}(x)\leq\varepsilon$ for all $x\in X\setminus
D_{Q}(x_{n}, 4)$.
\item The function $\widetilde{V}\ni z\mapsto
\{\alpha_n\widetilde{
\varphi}_n(z)\}_{n=1}^{\infty}\in\widetilde{c_0}$ is holomorphic
for every sequence $\{\alpha_n\}$ such that $1\leq \alpha_n \leq
1001$ for all $n$.
\end{enumerate}

Next, there is a real analytic norm $\lambda:c_0\to [0, \infty)$
which satisfies $\|y\|_{\infty}\leq \lambda(y)\leq
2\|y\|_{\infty}$ for every $y\in c_0$. Assuming $f:X\to [1, 1001]$
is $1$-Lipschitz, define a function $g:X\to\mathbb{R}$ by
    $$
g(x)=\frac{\lambda(\{f(x_n)\varphi_{n}(x)\}_{n=1}^{\infty})}{\lambda(\{\varphi_{n}(x)\}_{n=1}^{\infty})}
    $$
(the sup-partition of unity approach to approximation consists in
replacing the usual locally finite sum in a classical partition of
unity by taking the norm $\lambda$. Since $\lambda$ is equivalent
to the supremum norm of $c_0$, the function $g$ roughly behaves
like $\sup_{n}\{f(x_n)\varphi_n(x)\}/\sup_{n}\{\varphi_n(x)\}$,
which is a Lipschitz function approximating $f$). It can be
checked that $g$ is real analytic,
$8518492\textrm{Lip}(Q)$-Lipschitz, and $|f-g|\leq 8$. Moreover,
given $\delta>0$, there exists a neighborhood
$\widetilde{U}_{\delta}$ of $X$ in $\widetilde{X}$, {\em
independent of $f$ but dependent on the interval $[1, 1001]$ and
the functions $\varphi_n$}, such that $g$ has a holomorphic
extension $\widetilde{g}:\widetilde{U}_{\delta}\to\C$ satisfying
    $$
|\widetilde{g}(z)-g(x)|\leq\delta \textrm{ for all }
z=x+iy\in\widetilde{U}_{\delta}.
    $$
Proving such independence of $\widetilde{U}_{\delta}$ from $f$ is
a delicate matter that will be tackled with the help of a
refinement of the main result of \cite{CHP}: an estimation of the
domain of existence of the holomorphic solutions to a family of
complex implicit equations depending on a parameter (see
Proposition \ref{refinement of CHP} below).

If we were only interested in approximating bounded functions by
Lipschitz, real analytic functions and we did not care about the
Lipschitz constant of the approximations, a replacement of the
interval $[1, 1001]$ with a suitable translation of the range of
$f$ in the above argument would finish our proof (up to scaling).
However, as the size of the interval increases, so does the
Lipschitz constant of $f$, and inversely, the size of
$\widetilde{U}_{\delta}$ decreases. In order to prove our main
result in full generality we have to work harder.

Up to scaling, the above argument shows the following: there
exists $C\geq 1$ (depending only on $X$) such that, for every
$\delta>0$ there is an open neighborhood $\widetilde{U}_{\delta}$
of $X$ in $\widetilde{X}$ such that, for every Lipschitz function
$f:X\to[0,1]$ with $\textrm{Lip}(f)\leq 1$, there exists a real
analytic function $g:X\to\mathbb{R}$, with holomorphic extension
$\widetilde {g}:\widetilde{U}_{\delta}\to\mathbb{C}$, such that
\begin{enumerate}
\item $|f(x)-g(x)|\leq 1/10$ for all $x\in X$.
\item $g$ is Lipschitz, with $\textrm{Lip}(g)\leq
C\textrm{Lip}(f)$.
\item $|\widetilde{g}(x+iy)-g(x)|\leq \delta$ for all $z=x+iy\in\widetilde{U}_{\delta}$.
\end{enumerate}

Now, given a $1$-Lipschitz, bounded function $f:X\to [0,
+\infty)$, we can cut its graph into pieces, namely we may
 define, for $n\in\N$, the functions
\[
f_{n}(x)=
\begin{cases}
f(x)-n+1 & \text{ if }n-1\leq f(x)\leq n,\\
0 & \text{ if }f(x)\leq n-1,\\
1 & \text{ if }n\leq f(x).
\end{cases}
\]
The functions $f_{n}$ are clearly $1$-Lipschitz and take values in
the interval $[0,1]$, so, for $\delta>0$ (to be fixed later on)
there exist a neighborhood $\widetilde{U}_{\delta}$ of $X$ in
$\widetilde{X}$ and $C$-Lipschitz, real analytic functions
$g_{n}:X\rightarrow\mathbb{R}$, with holomorphic extensions
$\widetilde{g}_n :\widetilde{U}_{\delta}\to\C$, such that for all
$n\in\mathbb{N}$ we have that $g_{n}$ is $C$-Lipschitz,
$|g_{n}-f_{n}|\leq 1/8$, and $|\widetilde{g}_n(x+iy)-g_n(x)|\leq
\delta$ for all $z=x+iy\in\widetilde{U}_{\delta}$.

Observe that the function $X\ni
x\mapsto\{f_{n}(x)\}_{n=1}^{\infty}$ takes values in
$\ell_\infty$, and more precisely in the image of the path
$\beta:[ 0,+\infty) \rightarrow \ell_{\infty}$ defined by
\[
\beta\left(  t\right)  =\left(  1,\cdots,1,\underset{n^{\text{th}%
}\ \text{place}}{\underbrace{t-n+1}},0,0,\cdots
\right)=\sum_{j=1}^{n-1}e_j +(t-n+1)e_n  \text{ if }  n-1\leq
t\leq n.
\]
The path $\beta$ is a $1$-Lipschitz injection of $[0, +\infty)$
into $\ell_{\infty}$, with a uniformly continuous (but not
Lipschitz) inverse $\beta^{-1}:\beta([0,+\infty))\to [0,
+\infty)$.

Define a uniformly continuous (not Lipschitz) function $h$ on the
path $\beta$ by $h\left( \beta\left( t\right) \right) =t$ for all
$t\geq 0$, that is $h(y)=\beta^{-1}(y)$ for $y\in\beta([0,
+\infty))$. Then we have $f\left( x\right) =h\left(
\{f_{n}(x)\}_{n=1}^{\infty}\right)$.

We will construct an open tube of radius $1/8$ (with respect to
the supremum norm $\|\cdot\|_{\infty}$) around the path $\beta$ in
$\ell_{\infty}$, and a real-analytic approximate extension (with
bounded derivative) $H$ of the function $h$ defined on this tube.
Then, since $|g_n-f_n|\leq 1/10$ and $\{f_{n}(x)\}_{n=1}^{\infty}$
takes values in the path $\beta$, the function
$\{g_{n}(x)\}_{n=1}^{\infty}$ will take values in this tube, and
therefore $g(x):=H(\{g_{n}(x)\}_{n=1}^{\infty})$ will approximate
$H(\{f_{n}(x)\}_{n=1}^{\infty})$, which in turn approximates
$h(\{f_{n}(x)\}_{n=1}^{\infty})=f(x)$. Besides, since $H$ has a
bounded derivative on the tube and the functions $g_n$ are
$C$-Lipschitz then $g$ will be $CM$-Lipschitz, where $M$ is an
upper bound of $DH$ on the tube. Moreover, we will show that there
exist $\delta>0$ (this is the $\delta$ we had to fix) and a
holomorphic extension $\widetilde{H}$ of $H$ so that if
$v\in{\ell}_{\infty}$ satisfies $\|v\|_{\infty}\leq \delta$ then
$$
|H(u+iv)-H(u)|\leq 1.
$$
We will call $H$ a real analytic {\em gluing tube function}.

Thus, resetting $C$ to $CM$, we will deduce that, there exists an
open neighborhood $\widetilde{U}:=\widetilde{U}_{\delta}$ of $X$
in $\widetilde{X}$ such that, for every $1$-Lipschitz, bounded
function $f:X\to\R$, there exists a real analytic function
$g:X\to\R$, with holomorphic extension
$\widetilde{g}:\widetilde{U}\to\C$, such that
\begin{enumerate}
\item $|f(x)-g(x)|\leq 1$ for all $x\in X$.
\item $g$ is $C$-Lipschitz.
\item $|\widetilde{g}(x+iy)-g(x)|\leq 1$ for all $z=x+iy\in\widetilde{U}$.
\end{enumerate}

The last step of the proof consists in passing from bounded to
unbounded functions. This can be achieved by constructing a real
analytic approximation $\theta_n(Q(x))$ to a partition of unity
subordinated to a covering of $X$ consisting of crowns $C_n=\{x\in
X : 2^{n-1}<Q(x)<2^{n+1}\}$ of rapidly increasing diameter (so
that $\sum_{n=1}^{\infty}\textrm{Lip}(\theta_n)<3$), and by
approximating each restriction of $f$ to $C_n$ by a real analytic
$C$-Lipschitz function $g_n$, in order to define
    $$
g(x)=\sum_{n=1}^{\infty}\theta_{n}(Q(x))g_n(x),
    $$
which we will check is a $5C$-Lipschitz real analytic
approximation of $f$. Up to scaling, our main theorem will then be
proved.

\bigskip

\section{Notation and basic definitions. Complexifications.}

Our notation is standard, with $X$ denoting a Banach space, and an
open ball with centre $x$ and radius $r$ denoted $B_{X}\left( x,
r\right)$ or $B(x,r)$ when the space is understood.  If $\left\{
f_{j}\right\} _{j}$ is a sequence of Lipschitz functions defined
on $X,$ then we will at times say this family is
\textit{equi-Lipschitz} if there is a common Lipschitz constant
for
all $j.$ A \textit{homogeneous polynomial of degree }$n$ is a map,\textit{\ }%
$P:X\rightarrow\mathbb{R},$ of the form $P\left(  x\right)
=A\left( x,x,...,x\right)  ,$ where $A:X^{n}\rightarrow\mathbb{R}$
is $n-$multilinear and continuous. For $n=0$ we take $P$ to be
constant. A \textit{polynomial of degree }$n$ is a sum
$\sum_{i=0}^{n}P_{i}\left(  x\right)  ,$ where the $P_{i}$ are
$i$-homogeneous polynomials.

Let $X$ be a Banach space, and $G\subset X$ an open subset. A
function $f:G\rightarrow\mathbb{R}$ is called \textit{analytic }if
for every $x\in G,$
there are a neighbourhood $N_{x},$ and homogeneous polynomials $P_{n}%
^{x}:X\rightarrow\mathbb{R}$ of degree $n$, such that
\[
f\left(  x+h\right)  =\sum_{n\geq0}P_{n}^{x}\left(  h\right)
\;\text{provided\ }x+h\in N_{x}.
\]
Further information on polynomials may be found, for example, in
$\left[ \text{SS}\right]  .$

For a Banach space $X,$ we define its (Taylor) complexification
$\widetilde{X}=X\bigoplus iX$ with norm
\[
\left\|  x+iy\right\|  _{\widetilde{X}%
}=\sup_{0\leq\theta\leq2\pi}\left\|  \cos\theta\ x-\sin\theta\
y\right\| _{X}=\sup_{T\in X^{*}, \|T\|\leq 1}\sqrt{T(x)^2
+T(y)^2}.
\]
We will sometimes denote, for $z=x+iy\in\widetilde{X}$,
$x:=\text{Re}\,z, \,\, y:=\text{Im}\,z$. If $L:E\to F$ is a
continuous linear mapping between two real Banach spaces then
there is a unique continuous linear extension
$\widetilde{L}:\widetilde{E}\to\widetilde{F}$ of $L$ (defined by
$\widetilde{L}(x+iy)=L(x)+iL(y)$) such that
$\|\widetilde{L}\|=\|L\|$. For a continuous $k$-homogeneous
polimomial $P:E\to\R$ there is also a unique continuous
$k$-homogeneous polinomial $\widetilde{P}:\widetilde{E}\to\C$ such
that $\widetilde{P}=P$ on $E\subset\widetilde{E}$, but the norm of
$P$ is not generally preserved: one has that
$\|\widetilde{P}\|\leq 2^{k-1}\|P\|$. It follows that if $q\left(
x\right) $ is a continuous polynomial on $X,$ there is a unique
continuous polynomial $\widetilde {q}\left( z\right)
=\widetilde{q}\left( x+iy\right)$ on $\widetilde{X}$ where for
$y=0$ we have $\widetilde{q}=q.$ It also follows that if the
Taylor series $f(x+h)=\sum_{n=0}^{\infty}D^{n}f(x)(h)/n!$ of a
real analytic function $f:X\to\R$ at a point $x$ has radius of
convergence $r_x$ then the series
$\sum_{n=0}^{\infty}\widetilde{D^{n}f(x)}(h)/n!$ has radius of
convergence $r_x/2e$ (see for instance \cite[Lemma 0]{CH}).
Consequently $f$ has a holomorphic extension $\widetilde{f}$ to
the neighborhood $\{x+iy : x, y\in X,
\|x+iy\|_{\widetilde{X}}<r_x\}$ of $X$ in $\widetilde{X}$.

We will also use the fact that for this complexification procedure
the complexifications $\widetilde{c_0}, \widetilde{\ell}_{\infty}$
of the real Banach spaces $c_0$ and $\ell_{\infty}$ are just the
usual complex versions of these spaces. That is, we have
$\widetilde{c}_{0}=\{\left\{  z_{j}\right\}  : z_{j}
\in\mathbb{C}$, $ \lim_{j\to\infty}z_{j} = 0\}$, with norm
$\left\Vert z\right\Vert _{\widetilde{c}_{0}}=\left\Vert \left\{
z_{j}\right\} \right\Vert _{\widetilde{c}_{0}}=\max_{j}\left\{
\left\vert z_{j}\right\vert \right\}  ,$ and similarly for
$\widetilde {l}_{\infty}.$ For more information on
complexifications (and polynomials) we recommend \cite{MST}.

In the sequel, all extensions of functions from $X$ to
$\widetilde{X},$ as well as subsets of $\widetilde{X},$ will be
embellished with a tilde. All smoothness properties of a norm or
Minkowski functional are assumed implicitly to not include the
point $0$.

\bigskip

\section{Preliminary results: the Preiss norm and separating functions}

\medskip

\subsection{The Preiss norm}

As developed in \cite{FPWZ}, there is a real analytic norm on
$c_{0}$ (hereafter referred to as the Preiss norm, $\|\cdot\|$)
that is equivalent to the canonical supremum norm
$\|\cdot\|_{\infty}$. Let us recall the construction.  Let
$C:c_{0}\rightarrow\mathbb{R}$ be given by
$C(\{x_{n}\})=\sum_{n=1}^{\infty}\left(  x_{n}\right) ^{2n}$. Let
$W=\{x\in c_{0}:C(x)\leq1\}.$ Then $\left\Vert \cdot\right\Vert $
is the Minkowski functional of $W$; that is, $\left\Vert
x\right\Vert $ is the solution for $\lambda$ to $C\left(
\lambda^{-1}x\right)  =1.$ The Preiss norm is analytic at all
non-zero points in $c_{0}.$ To see this, let us define the
function $\widetilde{C}:V\rightarrow\mathbb{C}$ by
$\widetilde{C}\left( \left\{  z_{n}\right\}  \right)
=\sum_{n=1}^{\infty}\left(  z_{n}\right) ^{2n}$ where $V$ is the
subset of $\widetilde{l}_{\infty}$ for which the series converges.
Then $\widetilde{C}$ is analytic at each $z\in\widetilde {c}_{0}$.
Indeed, the partial sums are analytic as a consequence of the
analyticity of the (complex linear) projection functions
$p_{j}(\{z_{i}\})=z_{j}$. Since the series in the definition of
$\widetilde{C}$ converges locally uniformly at each
$z\in\widetilde{c}_{0}$ the analyticity of $\widetilde{C}$ on
$\widetilde {c}_{0}$ follows. Also, for $z\in\widetilde{c}_{0}$
sufficiently close to $c_{0}$ and $\lambda\in \C\setminus\{0\}$
sufficiently close to $\R\setminus\{0\}$ we have
$\frac{\partial\widetilde{C}\left( \lambda^{-1}z\right)
}{\partial\lambda }\neq0,$ hence one can apply the complex
Implicit Function Theorem (see e.g. \cite{Cartan}, or \cite{D}
page 265, where the real result for Banach spaces is easily
extended to the analytic case) to $F\left(  z,\lambda\right)
=\widetilde{C}\left( \lambda^{-1}z\right)  -1$ to obtain a unique
holomorphic solution $\widetilde{\lambda}\left(
z\right)  $ to $F\left(  z,\lambda\right)  =0,$ with $\lambda:=\widetilde{\lambda}\mid_{c_{0}%
}=\left\Vert \cdot\right\Vert$, defined on a neighborhood of $c_0
\setminus\{0\}$ in $\widetilde{c_0}$. Now if $x=\{x_{n}\}$
satisfies $\left\Vert x\right\Vert _{\infty}=1,$ then
$\sum_{n=1}^{\infty}\left( \left\Vert x\right\Vert
^{-1}x_{n}\right)  ^{2n}=1$ implies $\left\Vert x\right\Vert
\geq1.$ On the other hand, if $\left\Vert x\right\Vert
_{\infty}=1/2,$ then $C\left(  x\right)
\leq\sum_{n=1}^{\infty}(1/2)^{2j}<1,$ implying $\left\Vert
x\right\Vert <1$. Hence, $\left(  1/2\right) \left\Vert
x\right\Vert \leq\left\Vert x\right\Vert _{\infty}\leq\left\Vert
x\right\Vert $ for all $x$ in $c_{0}$. We shall use the above
notation throughout this article.

\medskip

\subsection{Separating polynomials and separating Lipschitz analytic functions}

Let $X$ be a Banach space. A \textit{separating polynomial} on $X$
is a polynomial $q$ on $X$ such that $0=q(0)<\inf\{|q(x)|:x\in
S_{X}\}$. It is known [FPWZ] that if $X$ is superreflexive and
admits a $C^{\infty}$-smooth bump function then $X$ admits a
separating polynomial. The following lemma makes precise,
observations of Kurzweil in [K].


\begin{lemma}
Let $X$ be a Banach space with a separating polynomial of degree
$n$. Then there exist $K>1$ and a $2n$ degree homogeneous
separating polynomial $q:X\rightarrow \lbrack0,+\infty)$ such that
$\Vert x\Vert ^{2n}\leq q(x)\leq K\Vert x\Vert^{2n}$ for all $x\in
X$.
\end{lemma}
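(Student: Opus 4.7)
The plan is to follow Kurzweil's argument from~\cite{K}. First, reduce to the non-negative case by replacing the given $p$ with $p^{2}$, which is still separating (with constant $c^{2}$) and has degree $2n$; so I may assume $p:X\to[0,+\infty)$, $p(0)=0$, and $p(x)\geq c>0$ on $S_{X}$. Decompose $p$ into its homogeneous components,
\[
p\;=\;\sum_{k=1}^{2n}P_{k},\qquad P_{k}\text{ is }k\text{-homogeneous},
\]
and note the following scale-respecting lower bound, which is the algebraic heart of the argument:
\[
\max_{1\le k\le 2n}|P_{k}(x)|^{1/k}\;\ge\;c_{0}\qquad\text{for every }x\in S_{X},
\]
for some $c_{0}>0$ depending only on $c$ and $n$. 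Indeed, if every $|P_{k}(x)|^{1/k}$ were smaller than $c_{0}$, then the triangle inequality would force $p(x)\le\sum_{k=1}^{2n}|P_{k}(x)|<\sum_{k=1}^{2n}c_{0}^{k}$, which falls below $c$ for $c_{0}$ small enough in terms of $c$ and $n$.

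Next, I would use this bound to build $q$. Choose $M$ so that $2M/k$ is an even positive integer for each $k\in\{1,\dots,2n\}$ (for instance, $M$ a sufficiently large multiple of $\operatorname{lcm}(1,\dots,2n)$), and define
\[
q(x)\;:=\;\sum_{k=1}^{2n}P_{k}(x)^{2M/k}.
\]
Each summand is a $2M$-homogeneous polynomial with values in $[0,+\infty)$ (by the even exponent), so $q$ itself is a $2M$-homogeneous polynomial $X\to[0,+\infty)$. The maximum estimate above yields, for every $x\in S_{X}$, an index $k(x)$ with $P_{k(x)}(x)^{2M/k(x)}\ge c_{0}^{2M}$, and hence $q(x)\ge c_{0}^{2M}$ on $S_{X}$; by homogeneity, $q(x)\ge c_{0}^{2M}\|x\|^{2M}$ on all of $X$. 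Continuity and $2M$-homogeneity furnish the corresponding upper bound $q(x)\le K'\|x\|^{2M}$, and a final rescaling of $q$ by $c_{0}^{-2M}$ normalizes the comparison to $\|x\|^{2M}\le q(x)\le K\|x\|^{2M}$.

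The main technical obstacle is to bring the degree of $q$ down to exactly $2n$ rather than to the significantly larger degree $2M$ produced by the naive construction above. This will require the sharper manipulation behind Kurzweil's construction in~\cite{K}: one splits $p$ into its even and odd homogeneous parts $p_{e},p_{o}$, observes via $(p_{e}-p_{o})^{2}\ge 0$ that
\[
p_{e}^{2}+p_{o}^{2}\;\ge\;\tfrac{1}{2}(p_{e}+p_{o})^{2}\;=\;\tfrac{1}{2}p^{2}\;\ge\;\tfrac{c^{2}}{2}\quad\text{on }S_{X},
\]
and then performs further algebraic reductions that keep the total degree pinned at $2n$ while preserving the uniform positive lower bound on $S_{X}$. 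This degree-matching is the crux of the proof; the remaining verifications of the two-sided comparison and the final rescaling are straightforward from the computations above.
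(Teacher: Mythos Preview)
Your $2M$-homogeneous construction is correct, and once you drop the preliminary squaring $p\mapsto p^{2}$ (which buys nothing: the homogeneous components of $p^{2}$ need not be individually nonnegative, so you end up raising them to even powers anyway) it is exactly the standard Kurzweil argument applied to the components $p_{1},\dots,p_{n}$ of $p$ itself, with $M$ a multiple of $\operatorname{lcm}(1,\dots,n)$.

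The paper takes the same mechanism but is more direct and more cavalier: it simply writes
\[
q=p_{1}^{2n}+p_{2}^{2n/2}+\cdots+p_{n}^{2}=\sum_{i=1}^{n}p_{i}^{\,2n/i},
\]
notes that $|p(x)|\ge c$ on $S_{X}$ forces some $|p_{i}(x)|\ge c/n$, hence $q(x)\ge\min_{i}(c/n)^{2n/i}>0$, and gets the upper bound from continuity plus $2n$-homogeneity. This is your construction with $M=n$; no squaring, no even/odd splitting.

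Your unease about the degree is justified, however. The paper's formula tacitly needs each exponent $2n/i$ to be an \emph{even} integer so that $p_{i}^{2n/i}$ is a nonnegative polynomial, and this already fails at $n=3$, $i=2$ (the middle term would be $p_{2}^{3}$). So, read literally, the paper does not establish degree exactly $2n$ for general $n$; your $2M$-degree $q$ is the honest version. Since the next lemma only uses that $q$ is $2m$-homogeneous for \emph{some} $m$ with $\|x\|^{2m}\le q(x)\le K\|x\|^{2m}$, you should stop after your $2M$ construction: the ``degree-matching to $2n$'' you call the crux is neither carried out by the paper nor needed downstream, and your even/odd sketch toward it is unnecessary.
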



\begin{proof}
We may suppose that $p=\sum_{i=1}^{n}p_{i},$ where $p_{i}$ is
$i$-homogeneous for $1\leq i\leq n$. Define
$q=p_{1}^{2n}+p_{2}^{2n/2}+\cdots p_{n}^{2}$. Note that $q$ is
$2n$-homogeneous. As $p$ is separating, there is some $\eta>0$
such that $q(x)\geq\eta$ for all $x\in S_{X}$. By scaling, we may
assume that $\eta=1$. Then from the $2n$-homogeneity, $q\left(
x\right)  \geq\left\Vert x\right\Vert ^{2n}.$ Now, as $q$ is
continuous at $0,$ there is $\delta>0$ with $\left\Vert
x\right\Vert \leq\delta$ implies $q\left(  x\right)  \leq1,$ so
again from $2n$-homogeneity, $1\geq q\left(  \frac{\delta
x}{\left\Vert x\right\Vert }\right)  =q\left(  x\right)
\frac{\delta^{2n}}{\left\Vert x\right\Vert ^{2n}},$ and we are
done.
\end{proof}

\medskip

Note that a separating polynomial, even though is Lipschitz on
every bounded set, is never Lipschitz on all of $X$. In the proof
of our main results we will require the existence of a globally
Lipschitz, real analytic separating function on $X$. The following
lemma provides us with such separating functions for every Banach
space having a separating polynomial.

\begin{lemma}
\label{separating function Q lemma} Let $X$ be a Banach space with
a separating polynomial $q$ as in the previous Lemma. Then the
function $Q:X\rightarrow \lbrack0,+\infty)$ defined by
\[
Q(x)=\left(  1+q(x)\right)  ^{\frac{1}{2n}}-1
\]
is real analytic, satisfies $\inf_{\Vert x\Vert\geq
1}Q(x)>0=Q(0)$, and is Lipschitz on all of $X$. Moreover, there is
some $\delta_{Q}>0$ such that $Q$ has an holomorphic extension
$\widetilde{Q}$ to the open strip $W_{\delta_{Q}}:=\{x+z:x\in
X,z\in \widetilde{X},\Vert z\Vert_{\widetilde{X}}<\delta_Q\}$ in
$\widetilde{X}$ such that $\widetilde{Q}$ is still Lipschitz on
$W_{\delta_{Q}}$. Finally, we have that
    $$
Q(x)<4r \implies \|x\| <8r , \, \textrm{ for all } \, r\geq 1,
x\in X.
    $$
In particular $Q(x)\geq \frac{1}{2}\|x\|$ for $\|x\|\geq 8$ and
$Q$ is separating.
\end{lemma}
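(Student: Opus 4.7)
The approach is to verify each claim in turn, relying on the chain rule together with the estimate $q(x)\geq\|x\|^{2n}$ from the previous lemma. Real analyticity of $Q$ on $X$ is immediate: $q$ is a polynomial, $1+q(x)\geq 1$ throughout $X$, and $t\mapsto (1+t)^{1/(2n)}-1$ is real analytic on $(-1,\infty)$. The identity $Q(0)=0$ is direct, and for $\|x\|\geq 1$ one has $q(x)\geq 1$, whence $Q(x)\geq 2^{1/(2n)}-1>0$. For $r\geq 1$ the inequality $Q(x)<4r$ rearranges to $q(x)<(4r+1)^{2n}-1$, which together with $\|x\|^{2n}\leq q(x)$ gives $\|x\|<4r+1\leq 5r<8r$; the contrapositive with $r=\|x\|/8\geq 1$ yields $Q(x)\geq\|x\|/2$ for $\|x\|\geq 8$, so $Q$ is separating.

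For global Lipschitzness on $X$, the chain rule gives
\[
DQ(x)=\tfrac{1}{2n}(1+q(x))^{(1-2n)/(2n)}\,Dq(x).
\]
Writing $A$ for the symmetric continuous $2n$-linear form defining $q$, one has $\|Dq(x)\|\leq 2n\|A\|\,\|x\|^{2n-1}$. The exponent $(1-2n)/(2n)$ is negative and $q(x)\geq\|x\|^{2n}$, so $(1+q(x))^{(1-2n)/(2n)}\leq (1+\|x\|^{2n})^{(1-2n)/(2n)}$, which is at most $\|x\|^{1-2n}$ when $\|x\|\geq 1$ and at most $1$ when $\|x\|\leq 1$. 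A case split on $\|x\|\gtrless 1$ and multiplication of the two factors yield $\|DQ(x)\|\leq\|A\|$ uniformly on $X$.

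For the holomorphic extension, let $\widetilde{q}:\widetilde{X}\to\mathbb{C}$ be the unique continuous $2n$-homogeneous polynomial extending $q$, and aim to define $\widetilde{Q}(z)=(1+\widetilde{q}(z))^{1/(2n)}-1$ using the principal branch of $w^{1/(2n)}$ on the right half-plane $\{\mathrm{Re}(w)>0\}$. For $z=x+iy$ with $x,y\in X$, expanding $\widetilde{q}(x+iy)=\sum_{k=0}^{2n}\binom{2n}{k}i^{k}A(x^{(2n-k)},y^{(k)})$ and separating off the $k=0$ term produces
\[
|\widetilde{q}(x+iy)-q(x)|\leq \|A\|\bigl((\|x\|+\|y\|)^{2n}-\|x\|^{2n}\bigr)\leq 2n\|A\|\,\|y\|\,(\|x\|+\|y\|)^{2n-1}.
\]
Comparing this with $1+q(x)\geq 1+\|x\|^{2n}$, a short case split on $\|x\|\gtrless 1$ shows that one can pick $\delta_Q>0$ depending only on $n$ and $\|A\|$ so that $|\widetilde{q}(z)-q(x)|\leq\tfrac12(1+q(x))$ uniformly for all $z=x+iy\in W_{\delta_Q}$. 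This forces $\mathrm{Re}(1+\widetilde{q}(z))\geq\tfrac12(1+q(x))\geq\tfrac12>0$, so $1+\widetilde{q}(z)$ remains in the right half-plane, and $\widetilde{Q}$ is a well-defined holomorphic extension of $Q$ to $W_{\delta_Q}$.

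For Lipschitzness of $\widetilde{Q}$ on the convex open set $W_{\delta_Q}$, the same chain rule gives
\[
D\widetilde{Q}(z)=\tfrac{1}{2n}(1+\widetilde{q}(z))^{(1-2n)/(2n)}\,D\widetilde{q}(z).
\]
The lower bound $|1+\widetilde{q}(z)|\geq\mathrm{Re}(1+\widetilde{q}(z))\geq\tfrac12(1+\|x\|^{2n})$ controls the first factor by a constant multiple of $\max(1,\|x\|^{1-2n})$, while $\|D\widetilde{q}(z)\|\leq 2n\|\widetilde{A}\|(\|x\|+\delta_Q)^{2n-1}$ is bounded by a constant multiple of $\max(1,\|x\|^{2n-1})$; their product is uniformly bounded on $W_{\delta_Q}$, and integration along the convex straight-line segments in $W_{\delta_Q}$ gives the desired global Lipschitz estimate. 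The main obstacle throughout is the uniform-in-$x$ choice of $\delta_Q$: the same number must simultaneously keep $1+\widetilde{q}(z)$ away from $(-\infty,0]$ for $x$ of arbitrarily large norm, so the principal branch of the $2n$-th root stays holomorphic on the image, and preserve the cancellation between the decay of $(1+\widetilde{q}(z))^{(1-2n)/(2n)}$ and the growth of $D\widetilde{q}$; the quantitative estimate $|\widetilde{q}(z)-q(x)|\leq\tfrac12(1+q(x))$ is exactly what achieves both at once.
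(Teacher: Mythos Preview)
Your proof is correct and follows essentially the same strategy as the paper: establish that $1+\widetilde{q}(z)$ stays in the right half-plane on a uniform strip so the principal $2n$-th root is available, then bound $D\widetilde{Q}$ via the chain rule by balancing the decay of $(1+\widetilde{q})^{(1-2n)/(2n)}$ against the growth of $D\widetilde{q}$. Your presentation is in fact slightly more streamlined than the paper's in two places: you obtain the stronger lower bound $\mathrm{Re}(1+\widetilde{q}(z))\geq\tfrac12(1+q(x))$ directly from $|\widetilde{q}(z)-q(x)|\leq\tfrac12(1+q(x))$ (the paper first proves $\geq 1/2$ via an auxiliary function and later re-derives the sharper bound for the Lipschitz estimate), and your direct argument for the final implication actually yields $\|x\|<5r$, sharper than the paper's contrapositive giving $8r$.
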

\begin{proof}
Since the function $w\mapsto w^{1/2n}$ is well defined (taking the
usual branch of log) and holomorphic on the half-plane
$\text{Re}\,w\geq1/2$ in $\mathbb{C}$, it is clear that
\[
\widetilde{Q}(x+z)=\left(  1+\widetilde{q}(x+z)\right)  ^{1/2m}%
\]
will be well defined and holomorphic on such a strip
$W_{\delta_Q}$ as soon as we find $\delta=\delta_Q>0$ small enough
so that $\operatorname{Re}\left( 1+\widetilde{q}(x+z)\right)
\geq1/2$ whenever $x\in X,z\in\widetilde{X},\Vert
z\Vert\leq\delta$.

Let $A$ (resp. $\widetilde{A}$) denote a $2n$-multilinear form on
$X$ (resp. $\widetilde{X}$) such that $q(x)=A(x, ..., x)$ (resp.
$\widetilde {q}(w)=\widetilde{A}(w, ..., w)$). We have, for $x\in
X$ and $z\in \widetilde{X}$ with $\|z\|\leq\delta<1$
($\delta=\delta_{Q}$ is to be fixed later),
\begin{align*}
&  \text{Re}\,\widetilde{q}(x+z)=\text{Re}\,A(x+z, x+z, ..., x+z)\\
&  =A(x, x, ...,x)+\text{ terms of the form } \text{Re}\,A(y_{1},
...,
y_{2n}),\\
&  \text{where } y_{j}\in\{x,z\} \text{ for all } j, \text{ and }
y_{k}=z
\text{ for at least one } k\\
&  \geq A(x, ..., x)-\sum_{j=1}^{2n}\|\widetilde{A}\|\binom{2n}{j}%
\|x\|^{2n-j}\|z\|^{j}\\
&  \geq\|x\|^{2n}-\|A\|\sum_{j=1}^{2n}\binom{2n}{j}\|x\|^{2n-j}\delta^{j}\\
&   \geq\|x\|^{2n}-\delta\|A\|\sum_{j=1}^{2n}\binom{2n}{j}\|x\|^{2n-j}\\
&  \geq\|x\|^{2n}-\delta\|A\|\left( \|x\|+1\right) ^{2n}.
\end{align*}
Hence
\[
\text{Re}\,(1+\widetilde{q}(x+z))\geq\frac{1}{2}+\frac{1}{2}+\|x\|^{2n}%
-\delta\|A\|\left( \|x\|+1\right) ^{2n}.
\]
Since the real function
\[
f(t)=\frac{\frac{1}{2}+t^{2n}}{(|t|+1)^{2n}}
\]
is continuous, positive and satisfies $\lim_{|t|\to\infty}f(t)=1$,
there
exists $\alpha\in (0, 1)$ such that $f(t)\geq\alpha$, and therefore $1+t^{2n}%
-\alpha(|t|+1)^{2n}\geq1/2$ for all $t\in\mathbb{R}$. If we set
$$\delta
=\frac{1}{2}\min\{1, \alpha/\|A\|\},$$ this implies that $1+\|x\|^{2n}%
-\delta\|A\|\left( \|x\|+1\right) ^{2n}\geq1/2$ for all $x\in X$,
and therefore
\[
\text{Re}\,(1+\widetilde{q}(x+z))\geq\frac{1}{2}, \textrm{ for all
} x+z\in W_{\delta},
\]
and the function $\widetilde{Q}$ is holomorphic on $W_{\delta}$.

Now let us check that $\widetilde{Q}$ is Lipschitz on
$W_{\delta}$. By the same estimation as above, for $x\in X$,
$z\in\widetilde{X}$ with $\Vert z\Vert\leq\delta$, we have
\[
|1+\widetilde{q}(x+z)|\geq\text{Re}\,(1+\widetilde{q}(x+z))\geq
1+\Vert x\Vert^{2n}-\delta\Vert A\Vert\left(  \Vert
x\Vert+1\right) ^{2n}\geq1/2,
\]
and on the other hand, the derivative of $\widetilde{q}$ being a
$(2n-1)$-homogeneous polynomial,
\[
\left\vert \widetilde{q}^{\prime}(x+z)\right\vert \leq\left\Vert
\widetilde {q}^{\prime}\right\Vert \left(  \left\Vert x\right\Vert
+\left\Vert z\right\Vert \right)  ^{2n-1}\leq\left\Vert
\widetilde{q}^{\prime}\right\Vert \left(  \left\Vert x\right\Vert
+\delta\right)  ^{2n-1}.
\]
By combining these two inequalities we get, for $x+z\in
W_{\delta}$,
$$
\Vert\widetilde{Q}\,^{\prime}(x+z)\Vert=\frac{\Vert\widetilde{q}^{\prime
}(x+z)\Vert}{\left\vert 2n\left(  1+\widetilde{q}(x+z)\right)
^{1-\frac {1}{2n}}\right\vert }\leq\frac{\left\Vert
\widetilde{q}^{\prime}\right\Vert \left(  \left\Vert x\right\Vert
+\delta\right)  ^{2n-1}}{\left( 1+ \Vert x\Vert^{2n}-\delta\Vert
A\Vert\left(  \Vert x\Vert+1\right)  ^{2n}\right)
^{\frac{2n-1}{2n}}}.
$$
Since the function
\[
\mathbb{R}\ni t\mapsto\frac{\left\Vert
\widetilde{q}^{\prime}\right\Vert \left(  \left\vert t\right\vert
+\delta\right)  ^{2n-1}}{\left( 1+|t|^{2n}-\delta\Vert
A\Vert\left( |t|+1\right)  ^{2n}\right)  ^{\frac
{2n-1}{2n}}}\in\mathbb{R}%
\]
is bounded on $\mathbb{R}$, it follows that the derivative
$\widetilde{Q}^{\prime}$ is bounded on the convex set
$W_{\delta}$, and therefore the function $\widetilde{Q}$ is
Lipschitz on $W_{\delta}$.

Finally, assume that $\|x\|\geq 8r$, $r\geq 1$. Since $q(x)\geq
\|x\|^{2n}$ and the function $h(t)=(1+t^{2n})^{1/2n}-1$ is
increasing, we have
    $$
Q(x)\geq h(\|x\|)\geq h(8r).
    $$
Now, for $r\geq 1$ we have
    $$
(1+4r)^{2n}\leq 2^{2n} (4r)^{2n}\leq 1+(8r)^{2n},
    $$
from which it follows that
    $$
h(8r)\geq 4r,
    $$
and therefore $Q(x)\geq 4r$.
\end{proof}


\bigskip

\section{The proof of the main result}

\medskip

In the sequel we will be using the following notation: for the
real analytic and Lipschitz separating function $Q$ constructed in
Lemma \ref{separating function Q lemma}, and for every $x\in X$,
$r>0$, we define the $Q$-bodies
\[
D_{Q}(x,r)=\{y\in X:Q(y-x)<r\}.
\]

\medskip

\subsection{Real analytic sup-partitions of unity.}

The following Lemma provides us, for any given $r>0,
\varepsilon\in (0,1)$, with a real-analytic analogue of {\em
sup-partitions of unity} (see \cite{F1, HJ}) which, one could say,
are {\em $\varepsilon$-subordinated} to a covering of the form
$X=\bigcup_{j=1}^{\infty}D_{Q}(x_{j}, 4r)$.


\begin{lemma}
\label{construction of the sup partition of unity} Let
$\widetilde{V}=W_{\delta_Q}$ be an open strip around $X$ in
$\widetilde{X}$ in which the function $\widetilde{Q}$ of Lemma
\ref{separating function Q lemma} is defined. Given $r, K, \eta>0,
\varepsilon\in(0,1),$ and a covering of the form
$X=\bigcup_{j=1}^{\infty}D_{Q}(x_{j}, r)$, there exists a sequence
of holomorphic functions
$\widetilde{\varphi}_{n}=\widetilde{\varphi}_{n,r, \varepsilon}
:\widetilde{V}\to\mathbb{C}$, whose restrictions to $X$ we denote
by $\varphi_{n}=\varphi_{n, r, \varepsilon}$, with the following
properties:

\begin{enumerate}
\item The collection $\{\varphi_{n, r, \varepsilon} :X\to[0,2] \, | \, n\in\mathbb{N}\}$ is
equi-Lipschitz on $X$, with Lipschitz constant
$L=2\text{Lip}(Q)/r$ (independent of $\varepsilon)$.

\item $0\leq\varphi_{n, r, \varepsilon}(x)\leq 1+\varepsilon$ for all $x\in X$.

\item For each $x\in X$ there exists $m=m_{x,r}\in\mathbb{N}$ (independent of $\varepsilon)$
with $\varphi_{m, r, \varepsilon}(x)>1$. Besides, $m_{x,s}\geq
m_{x,r}$ whenever $r\geq s$.

\item $0\leq\varphi_{n, r, \varepsilon}(x)\leq\varepsilon$ for all $x\in X\setminus
D_{Q}(x_{n}, 4r)$.

\item[{(4')}] $\|\varphi_{n, r, \varepsilon}(x)\|\leq\varepsilon$
for all $x\in X\setminus D_{Q}(x_{n}, 5r)$.

\item For each $x\in X$ there exist $\delta_{x, r} >0$, $a_{x, r}>0$, and $n_{x, r}\in\mathbb{N}$
(independent of $\varepsilon$) such that
$$|\widetilde {\varphi}_{n, r, \varepsilon} (x+z)|<\frac{1}{n! a_x^{n}M_n}  \,\,
\textrm{ for } \,  n>n_{x, r}, \, z\in \widetilde{X} \textrm{ with
} \|z\|_{\widetilde{X}}<\delta_{x, r},
$$
where $M_n:=e^{2K^2\eta}(1+\|x_n\|)$.

\item For each $x\in X$ there exists $\delta_{x, r} >0$ (independent of $\varepsilon$)
and $n_{x, r, \varepsilon}\in\mathbb{N}$ such that for
$\|z\|_{\widetilde{X}}<\delta_{x, r}$ and $n>n_{x, r,\varepsilon}$
we have $|\widetilde {\varphi}_{n, r, \varepsilon}
(x+z)|<\varepsilon$.

\item For each $x\in X$ there exists $\delta_{x,r,\varepsilon}$
such that
$$|\widetilde {\varphi}_{n, r, \varepsilon}
(x+z)|\leq 1+2\varepsilon \,\, \textrm{ for } n\in\N, r\geq 1, \,
z\in \widetilde{X} \textrm{ with }\|z\|_{\widetilde{X}}\leq
\delta_{x, r, \varepsilon}.$$
\end{enumerate}
Moreover, in the case when $r\geq 1$, the numbers $\delta_{x, r},
a_{x, r}, n_{x, r}, \delta_{x, r, \varepsilon}$ and $n_{x, r,
\varepsilon}$ can be assumed to be independent of $r$.
\end{lemma}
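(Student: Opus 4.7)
My plan is to set $\varphi_n(x) = \psi(Q(x-x_n))$ for a carefully chosen real analytic template $\psi:\R\to[0,1+\varepsilon]$, and extend holomorphically by $\widetilde{\varphi}_n(z)=\widetilde{\psi}(\widetilde{Q}(z-x_n))$ on $\widetilde{V}=W_{\delta_Q}$, using the holomorphic extension $\widetilde{Q}$ from Lemma \ref{separating function Q lemma}. The template $\psi$ will be required to satisfy $\psi(t)>1$ for $0\leq t<r$, $\psi(t)\leq\varepsilon$ for $t\geq 4r$, and $|\psi'|\leq 2/r$, and to admit a complex extension with rapid decay on a horizontal strip around $\R$. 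With these properties the chain rule together with the Lipschitz bound on $Q$ immediately produces the equi-Lipschitz constant $2\,\text{Lip}(Q)/r$ required in (1), and the chain of containments $D_Q(x_n, r)\subset D_Q(x_n,4r)\subset D_Q(x_n,5r)$ yields (4) and (4') directly.

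The key constructive step is the template $\psi$. The slope budget $2/r$ is compatible with a drop from $1+\varepsilon$ down to $\varepsilon$ over an interval of length $3r$ (which needs slope of order $1/(3r)$), so there is room. A concrete choice is $\psi$ equal to $(1+\varepsilon)$ times a Gaussian mollification of the indicator of $(-\infty,2r]$, with mollifier width $\sigma$ of order $r$; this $\psi$ is entire, and its extension satisfies the classical strip bound
$$|\widetilde{\psi}(u+iv)|\leq e^{v^{2}/\sigma^{2}}\,\psi(u),$$
by a direct Gaussian convolution estimate. With $\psi$ fixed, properties (1), (2), (3), (4), (4') and (7) fall out by routine computation, using $|\widetilde{Q}(z-x_n)-Q(x-x_n)|\leq\text{Lip}(\widetilde{Q})\,\|z-x\|_{\widetilde{X}}$ on $W_{\delta_Q}$; in particular (3) invokes the covering hypothesis, since for $x\in D_Q(x_m,r)$ we have $Q(x-x_m)<r$ and hence $\varphi_m(x)>1$.

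The main obstacle I expect is the factorial decay in $n$ demanded by (5). The strip bound above only yields $|\widetilde{\varphi}_n(x+z)|\leq C\,\exp(-c\,Q(x-x_n)^{2})$ for $\|z\|_{\widetilde{X}}\leq\delta_{x,r}$ small, which dominates $1/(n!\,a_{x,r}^{n}M_n)$ only once $Q(x-x_n)^{2}$ grows faster than $n\log n$. To secure this, one must exploit the structure of the covering supplied by the applications of the lemma: in the proof of Theorem \ref{analytic Lip approximation} the sequence $\{x_n\}$ will arise from enumerating $r$-covers of crowns $\{2^{k-1}<Q<2^{k+1}\}$, so we may arrange the enumeration so that for each fixed $x$ and each $n$ beyond a threshold $n_{x,r}$ one has $\|x-x_n\|\geq\|x_n\|/2$, and then the final clause of Lemma \ref{separating function Q lemma} gives $Q(x-x_n)\geq\|x_n\|/8$ growing as fast as required; (6) then follows a fortiori. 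Should one wish to avoid depending on the external combinatorial structure, the Gaussian mollifier can be upgraded to a kernel $e^{-ct^{2k}}$ with large $k$, yielding faster-than-Gaussian decay of $\widetilde{\psi}$ on a slightly narrower strip, which forces the factorial rate for any reasonable covering. The final clause about uniformity in $r\geq 1$ is then handled by scaling $\sigma$ proportionally to $r$ and observing that $\text{Lip}(\widetilde{Q})$ depends only on $X$, so the thresholds $n_{x,r}$, $\delta_{x,r}$ and constants $a_{x,r}$ can be taken independent of $r$ once $r\geq 1$.
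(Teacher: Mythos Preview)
Your template ansatz $\varphi_n(x)=\psi(Q(x-x_n))$ cannot work, and the obstruction is not the \emph{rate} of decay in (5) but the far more basic requirement that $\varphi_n(x)\to 0$ at all. In an infinite-dimensional space $X$ a ball $B(x,1)$ is not totally bounded, so any covering $X=\bigcup_j D_Q(x_j,r)$ forces infinitely many centres $x_j$ to satisfy $D_Q(x_j,r)\cap B(x,1)\neq\emptyset$, and for each such $j$ one has $Q(x-x_j)\leq r+\text{Lip}(Q)$. Since a Gaussian mollification $\psi$ is strictly positive, $\psi(Q(x-x_j))\geq\psi(r+\text{Lip}(Q))>0$ for infinitely many $j$, so the sequence $\{\varphi_n(x)\}$ is not even in $c_0$, and (5), (6) fail outright. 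Neither of your proposed fixes helps: no enumeration of a genuine covering can arrange $\|x-x_n\|\to\infty$ for every fixed $x$, and replacing the Gaussian by $e^{-ct^{2k}}$ still gives a strictly positive $\psi$, so the same argument applies.

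The paper's construction avoids this by making $\varphi_n$ depend on \emph{all} of $Q(x-x_1),\dots,Q(x-x_n)$, not just the last one. Concretely, one defines a bump $b_n$ on $\ell_\infty^n$ supported on a box $A_n$ that requires $y_n$ small \emph{and} $y_1,\dots,y_{n-1}$ all at least $2r$; after Gaussian convolution in $\R^n$ one sets $\varphi_n(x)=\nu_n\bigl(Q(x-x_1),\dots,Q(x-x_n)\bigr)$. Thus $\varphi_n(x)$ is large only when $x_n$ is essentially the \emph{first} centre close to $x$, which happens for at most one $n$; for all larger $n$ some earlier coordinate $Q(x-x_{n_x})$ is small while the support of $b_n$ demands it be at least $2r$, and the Gaussian convolution then produces the required exponential (hence factorial, after choosing $\kappa_n$ large) decay. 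This ``first-hit'' mechanism is the missing idea in your proposal.
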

\begin{proof}
Define subsets $A_{1,r}=\{y_{1}\in\mathbb{R}: -1\leq y_{1}\leq
4r\}$, and, for $n\geq2$,
\[
A_{n,r} =\{y=\{y_{j}\}_{j=1}^{n}\in\ell^{n}_{\infty} : -1-r\leq
y_{n}\leq4r, \, 2r\leq y_{j}\leq M_{n,r}+2r \text{ for } 1\leq
j\leq n-1\},
\]
\[
A^{\prime}_{n,r}=\{y=\{y_{j}\}_{j=1}^{n}\in\ell^{n}_{\infty} : -1\leq y_{n}%
\leq3r, \, 3r\leq y_{j}\leq M_{n,r}+r \text{ for } 1\leq j\leq
n-1\},
\]
\[
\text{where } M_{n,r}=\sup\left\{  Q\left(  x-x_{j}\right)  :x\in
D_{Q}(x_{n}, 4r),\ 1\leq j\leq n\right\} .
\]

Let
$b_{n}=b_{n,r,\varepsilon}:\ell_{\infty}^{n}\rightarrow\lbrack0,2]$
be the function defined by
\[
b_{n}(y)=(1+\varepsilon)\max\{0,1-\frac{1}{r}\text{dist}_{\infty}(y,A_{n}^{\prime})\},
\]
where $\text{dist}_{\infty}(y,A)=\inf\{\Vert
y-a\Vert_{\infty}:a\in A\}$.

In the sequel, in order not to be too burdened by notation, will
omit the subscripts $r, \varepsilon$ whenever they do not play a
relevant role locally in the argument.

It is clear that support$(b_{n})=A_{n}$, that
$b_{n}=1+\varepsilon$ on $A_{n}^{\prime}$, and that $b_{n}$ is
$(2/r)$-Lipschitz (note in particular that the Lipschitz constant
of $b_{n}$ does not depend on $n\in\N$ or $\varepsilon\in (0,1)$).

Since the function $b_n=b_{n, r, \varepsilon}$ is
$\frac{2}{r}$-Lipschitz and bounded by $2$ on $\mathbb{R}^{n}$, it
is a standard fact that the normalized integral convolutions of
$b_{n}$ with the Gaussian-like kernels $y\mapsto G_{\kappa }(y):=
e^{-\kappa\sum_{j=1}^{{n}}2^{-j}y_{j}^{2}}$,
\[
x\mapsto\frac{1}{T_{\kappa}}b_{n} * G_{\kappa}(x)=\frac{1}{\int_{\mathbb{R}%
^{n}}e^{-\kappa\sum_{j=1}^{{n}}2^{-j}y_{j}^{2}}dy} \int_{\mathbb{R}^{n}}%
b_{n}(y)e^{-\kappa\sum_{j=1}^{n}2^{-j}(x_{j}-y_{j})^{2}}dy,
\]
\[
\text{where } T_{\kappa}=T_{\kappa, n}=\int_{\mathbb{R}^{n}}e^{-\kappa\sum_{j=1}^{{n}}%
2^{-j}y_{j}^{2}}dy,
\]
converge to $b_{n}$ uniformly on $\mathbb{R}^{n}$ as
$\kappa\to+\infty$. Moreover, since $b_n=b_n'=0$ on the open set
$\ell_{\infty}^n\setminus A_n$, we also have that
    $$
\lim_{\kappa\to+\infty}\left(\frac{1}{T_{\kappa, n}}b_{n, r,
\varepsilon}
* G_{\kappa}\right)' (x)=b_{n,r,\varepsilon}'(x)
    $$
uniformly in $x\in \{y\in\ell_{\infty}^{n} \, : \,
\textrm{dist}_{\infty}(y, A_n)\geq r\}$. Therefore, for each
$n\in\mathbb{N}$ we can find $\kappa_n =\kappa_{n, r,
\varepsilon}>0$ large enough so that
$$
|b_{n, r, \varepsilon}(x)-\frac{1}{T_{\kappa_{n,
\varepsilon}}}b_{n, r, \varepsilon}
* G_{\kappa_{n, \varepsilon}}(x)|\leq\frac{\varepsilon}{2} \,\, \text{ for
all } \,\, x\in\mathbb{R}^{n}\, \textrm{ and }  \eqno(*)
$$
$$
\|b_n'(x)-(b_n*G_{\kappa_n})'(x)\|\leq\frac{\varepsilon}{2} \,\,
\textrm{ whenever } \,\, \textrm{dist}_{\infty}(x, A_n)\geq r.
\eqno(**)
    $$
For reasons that will become apparent later on, we shall also take
each $\kappa_{n, r, \varepsilon}$ to be large enough so that
\[
\kappa_{n}^{n}\geq 2(\sqrt{2})^{n}(n!)^{2}. \eqno (***)
\]
Also, note that when $r\geq 1$ then the functions $b_{n, r,
\varepsilon}$ are all $2$-Lipschitz and therefore $\kappa_{n,r,
\varepsilon}$ can be assumed to be independent of $r$.

Let us also define $\widetilde{T}_n
=\int_{\mathbb{R}^{n}}e^{-\sum_{j=1}^{n}2^{-j} y_{j}^{2}}dy$, and
 \[
T_{n}:= T_{\kappa_n,
n}=\int_{\mathbb{R}^{n}}e^{-\kappa_{n}\sum_{j=1}^{n}2^{-j}y_{j}^{2}}dy
 =\frac{1}{\kappa_{n}^{n/2}}\widetilde{T}_{n},
\]
and observe that by a change of variables
    $$
T_{n}=\int_{\mathbb{R}^{n}}e^{-\kappa_{n}\sum_{j=1}^{n}2^{-j}y_{j}^{2}}dy
 = \int_{\mathbb{R}^{n}}e^{-\kappa_{n}\sum_{j=1}^{n}2^{-j}(A_j-y_{j})^{2}}dy
    $$
for any $A_j \in\R$.

Now define $\nu_{n}:\ell^{n}_{\infty}\to\mathbb{R}$ by
\[
\nu_{n} (x) := \frac{1}{T_{\kappa_{n}}}b_{n} * G_{\kappa_{n}}(x) =
\frac
{1}{T_{n}}\int_{\mathbb{R}^{n}}b_{n}(y)e^{-\kappa_{n}\sum_{j=1}^{n}%
2^{-j}(x_{j}-y_{j})^{2}}dy.
\]

Let us note that
\[
\nu_{n}(x)=\frac{1}{T_{n}}\int_{\mathbb{R}^{n}}b_{n}(y)e^{-\kappa_{n}%
\sum_{j=1}^{n}2^{-j}(x_{j}-y_{j})^{2}}dy= \frac{1}{T_{n}}\int_{\mathbb{R}^{n}%
}b_{n}(x-y)e^{-\kappa_{n}\sum_{j=1}^{n}2^{-j}y_{j}^{2}}dy,
\]
and so
\begin{align*}
\left\vert \nu_{n}\left(  x\right)  -\nu_{n}\left(
x^{\prime}\right) \right\vert  &  =\left\vert
\frac{1}{T_{n}}\int_{\mathbb{R}^{n}}\left( b_{n}\left(  x-y\right)
-b_{n}\left(  x^{\prime}-y\right)  \right)
e^{-\kappa_{n}\sum_{j=1}^{n}2^{-j}y_{j}^{2}}dy\right\vert \\
 &  \leq\frac{1}{T_{n}}\int_{\mathbb{R}^{n}}\left\vert
b_{n}\left(  x-y\right)
-b_{n}\left(  x^{\prime}-y\right)  \right\vert e^{-\kappa_{n}\sum_{j=1}%
^{n}2^{-j}y_{j}^{2}}dy\\
&  \leq\frac{2}{r}\ \left\Vert x-x^{\prime}\right\Vert _{\infty}\frac{1}%
{T_{n}}\int_{\mathbb{R}^{n}}e^{-\kappa_{n}\sum_{j=1}^{n}2^{-j}y_{j}^{2}}dy\\
 &  =\frac{2}{r}\left\Vert x-x^{\prime}\right\Vert _{\infty}.
\end{align*}
Hence, $\nu_{n}$ is $\frac{2}{r}$-Lipschitz. Note also that
$\|\nu_n\|_{\infty}\leq\|b_n\|_{\infty}\leq 2$.

\medskip

\noindent Next, consider the map $\lambda_{n}:X\rightarrow
l_{\infty}^{n}$ given by
\[
\lambda_{n}\left(  x\right)  =\left(  Q\left(  x-x_{1}\right)
,...,Q\left( x-x_{n}\right)  \right)  ,
\]
where $Q$ is the real analytic and Lipschitz separating function
constructed in Lemma \ref{separating function Q lemma}.

\medskip

\noindent Then for $n\geq1$ we define (real) analytic maps
$\varphi _{n}=\varphi_{n,r,\varepsilon}:X\rightarrow\mathbb{R}$ by
\begin{align*}
\varphi_{n}\left(  x\right)   &  =\nu_{n}\left(  \lambda_{n}\left(
x\right) \right)  =\nu_{n}\left(  \left\{  Q\left(  x-x_{j}\right)
\right\}
_{j=1}^{n}\right)  \\
&  =\frac{1}{T_{n}}\int_{\mathbb{R}^{n}}b_{n}(y)e^{-\kappa_{n}\sum_{j=1}%
^{n}2^{-j}\left(  Q\left(  x-x_{j}\right)  -y_{j}\right)  ^{2}}dy\text{.}%
\end{align*}
Since $Q$ is Lipschitz on $X$, we have that
\begin{align*}
\left\vert \varphi_{n}\left(  x\right)  -\varphi_{n}\left(
x^{\prime}\right) \right\vert   =\left\vert \nu_{n}\left(
\lambda_{n}\left(  x\right) \right)  -\nu_{n}\left(
\lambda_{n}\left(  x^{\prime}\right)  \right) \right\vert
\leq\frac{2}{r}\left\Vert \lambda_{n}\left(  x\right)
-\lambda_{n}\left( x^{\prime}\right)  \right\Vert
_{\infty}\\
=\frac{2}{r}\left\Vert \left\{  Q\left(  x-x_{j}\right) -Q\left(
x^{\prime}-x_{j}\right)  \right\}  _{j=1}^{n}\right\Vert _{\infty}
\leq\frac{2}{r}\text{Lip}(Q)\left\Vert x-x^{\prime}\right\Vert
_{X},
\end{align*}
hence the collection $\left\{  \varphi_{n, r, \varepsilon}:
n\in\N\right\} $ is uniformly Lipschitz on $X$, with constant
$\frac{2}{r}\text{Lip}(Q)$.

\noindent We can extend the maps $\varphi_{n, r, \varepsilon}$ to
complex valued maps defined on $W_{\delta_{Q}}$ (see Lemma
\ref{separating function Q lemma}), calling them
$\widetilde{\varphi}_{n, r, \varepsilon}$. Namely (where $x\in X$,
$z\in\widetilde{X}$),
\[
\widetilde{\varphi}_{n}\left(  x+z\right)  =\frac{1}{T_{n}}\int_{\mathbb{R}%
^{n}}b_{n}\left(  y\right)
e^{-\kappa_{n}\sum_{j=1}^{n}2^{-j}\left( \widetilde{Q}\left(
x-x_{j}+z\right)  -y_{j}\right)  ^{2}}dy
\]
\noindent Note that the $\widetilde{\varphi}_{n}$ are well defined
(as the $b_n$ have compact supports) and are holomorphic where
$\widetilde{Q}$ is (namely on $\widetilde{V}$). Hence the above
calculation establishes (1) as $\widetilde{\varphi}_{n}\mid
_{X}=\varphi_{n}.$  Bearing in mind that
$\|\nu_n\|_{\infty}\leq\|b_n\|_{\infty}$, it is also clear that
$0\leq\varphi _{n}(x)\leq 1+\varepsilon$ for all $x\in X,
n\in\mathbb{N}$, which proves (2).

\medskip

Let us show (3). For each fixed $x\in X,$ there exists $m=m_{x,r}$
with $x\in D_{Q}(x_{n_x},3r)$ but with $x\notin D_{Q}(x_i,3r)$ for
$i<m$. Observe that if $r\geq s$ then $x\notin D_{Q}(x_i,3s)$ for
$i<m_{x,r}$, so we necessarily have $m_{x,s}\geq m_{x,r}$ for all
$r\geq s$.

This implies that the point $\left(  Q\left( x-x_{1}\right)
,Q\left( x-x_{2}\right)  ,...,Q\left( x-x_{m_{x,r}}\right) \right)
$ belongs to $A_{m, r}^{\prime}$, where the function $b_{m_{x,r}}$
takes the value $1+\varepsilon$. According to $(\ast)$, we have
\begin{eqnarray*}
& & |1+\varepsilon-\varphi_{m}(x)|=\\
& & |b_{m}(x)-(\frac{1}{T_{\kappa_{n_x}}}b_{m}\ast G_{\kappa_{m}%
})(Q(x-x_{1}),Q(x-x_{2}),...,Q(x-x_{m}))|\leq\varepsilon/2,
\end{eqnarray*}
which yields $\varphi_{m}(x)\geq 1+\varepsilon-\varepsilon/2>1$.

\medskip

Properties (4) and (4') are shown similarly: if $Q(x-x_{n})\geq4r$
then the point $(Q(x-x_{1}), ..., Q(x-x_{n}))$ lies in a region of
$\mathbb{R}^{n}$ where the function $b_{n}$ takes the value $0$,
and $(*)$ immediately gives us $\varphi_n(x)\leq\varepsilon/2$.
And if $Q(x-x_n)\geq 5r$ then $(Q(x-x_{1}), ...,
Q(x-x_{n}))\in\{y\in\ell_{\infty}^{n} : \textrm{dist}_{\infty}(y,
A_n)\geq r\}$, so $(**)$ implies that
$\|\varphi_{n}'(x)\|\leq\varepsilon/2$.

\medskip

We finally show the more delicate properties $(5)$, $(6)$ and
$(7)$. For $x\in X$ and $z\in\widetilde{X}$ with $\|z\|<\delta
_{Q}$, according to Lemma \ref{separating function Q lemma}, we
have
\[
\widetilde{Q}\left(  x-x_{j}+z\right)  =Q\left(  x-x_{j}\right)
+Z_{j},
\]
\[
\text{where } Z_{j}\in\mathbb{C} \text{ with } \left\vert
Z_{j}\right\vert \leq C\left\Vert z\right\Vert _{\widetilde{X}},
\]
$C$ being the Lipschitz constant of $\widetilde{Q}$ on the strip
$W_{\delta_{Q}}$.

Now
\begin{align*}
\left(  \widetilde{Q}\left(  x-x_{j}+z\right)  -y_{j}\right)  ^{2}
&
=\left(  Q\left(  x-x_{j}\right)  -y_{j}+Z_{j}\right)  ^{2}\\
&  =\left(  Q\left(  x-x_{j}\right)  -y_{j}\right)  ^{2}+2\left(
Q\left( x-x_{j}\right)  -y_{j}\right)  Z_{j}+Z_{j}^{2}.
\end{align*}
Hence, for $\left\Vert z\right\Vert _{\widetilde{X}}< \delta_{Q}$
we have
\begin{align*}
&  \operatorname{Re}\left(  \widetilde{Q}\left(  x-x_{j}+z\right)
-y_{j}\right)  ^{2}\\
 &  =\left(  Q\left(  x-x_{j}\right)  -y_{j}\right) ^{2}+2\left(
Q\left(
x-x_{j}\right)  -y_{j}\right)  \operatorname{Re}Z_{j}+\operatorname{Re}%
(Z_{j}^{2})\\
 &=\left(Q(x-x_j)-y_j+ \operatorname{Re}Z_{j} \right)^{2}- \left(
\operatorname{Re}Z_{j} \right)^{2}+\operatorname{Re}(Z_{j}^{2})\\
&\geq \left(Q(x-x_j)-y_j+ \operatorname{Re}Z_{j}
\right)^{2}-2C^2\|z\|_{\widetilde{X}}^{2} \, .
\end{align*}
Next, for each $x\in X$ there exists $n_{x, r}$ so that $Q\left(
x-x_{n_{x, r}}\right)< r$ (note that $n_{x, r}$ can be assumed to
be equal to $n_{x,1}$ in the case $r\geq 1$), and also for $n>
n_{x, r}$ and $y\in A_{n}=\ $ support$\left( b_{n}\right) $ we
have $y_{n_{x, r}}\geq 2r.$ Hence, $y_{n_{x, r}}-Q\left(
x-x_{n_{x, r}}\right)
>2r-r=r.$ Thus, for $\left\Vert z\right\Vert
_{\widetilde{X}}<\min\{r/2C, \delta_{Q}\}$, $n>n_{x, r}$ and $y\in
A_n$ we have
    \begin{align*}
& \left( Q\left( x-x_{n_{x, r}}\right) -y_{n_{x, r}}
+\operatorname{Re} (Z_{n_{x, r}})\right)^{2}= \left( y_{n_{x,
r}}-Q\left( x-x_{n_{x, r}}\right)
-\operatorname{Re} (Z_{n_{x, r}})\right)^{2}\\
& \geq \left( y_{n_{x, r}}-Q\left( x-x_{n_{x, r}}\right)
-C\left\Vert z\right\Vert _{\widetilde{X}}\right)^{2}\geq \left( r
-C\left\Vert
z\right\Vert _{\widetilde{X}}\right)^{2} \\
& \geq (r-\frac{r}{2})^{2}=\frac{r^2}{4}.
    \end{align*}

It follows that for $\left\Vert z\right\Vert
_{\widetilde{X}}<\min\{r/2C, \delta_{Q}\}$, $n>n_{x, r}$, and
$y\in A_n$,
\begin{align*}
&  \sum_{j=1}^{n}2^{-j}\operatorname{Re}\left( \widetilde{Q}\left(
x -x_{j}+z\right)  -y_{j}\right)  ^{2}\\
 &  \geq\sum_{j=1}^{n}2^{-j} \left(Q(x-x_j)-y_j+
\operatorname{Re}Z_{j} \right)^{2}-2C^2 \sum_{j=1}^{n}2^{-j}\|z\|_{\widetilde{X}}^{2} \\
 & =\frac{1}{2}\sum_{j=1}^{n}2^{-j} \left(Q(x-x_j)-y_j+
\operatorname{Re}Z_{j} \right)^{2} +
\frac{1}{2}\sum_{j=1}^{n}2^{-j} \left(Q(x-x_j)-y_j+
\operatorname{Re}Z_{j} \right)^{2} -2C^2
\sum_{j=1}^{n}2^{-j}\|z\|_{\widetilde{X}}^{2}\\
 & \geq \frac{1}{2}\sum_{j=1}^{n}2^{-j} \left(Q(x-x_j)-y_j+
\operatorname{Re}Z_{j} \right)^{2} + \frac{1}{2} 2^{-n_{x,
r}}\frac{r^2}{4}
-2C^{2}\sum_{j=1}^{n}2^{-j}\left\Vert z\right\Vert _{\widetilde{X}}^{2}\\
 &  \geq \frac{1}{2}\sum_{j=1}^{n}2^{-j} \left(Q(x-x_j)-y_j+
\operatorname{Re}Z_{j} \right)^{2}+ 2^{-n_{x,
r}-3}r^2-2C^{2}\left\Vert z\right\Vert _{\widetilde{X}}^{2}.
\end{align*}
Define
    $
\delta_{x,r} =r\sqrt{\frac{2^{-n_{x, r} -4}}{C^2}}, \,\,\,\,\,
a_{x, r}=r^2 2^{-n_{x, r} -4}
    $
for $0<r\leq 1$, and put $a_{x,r}:= a_{x,1}$ and $\delta_{x,r}:=
\delta_{x,1}$ if $r\geq 1$. For every $n>n_{x, r}$, $y\in A_n$,
and $z\in\widetilde{X}$ with $\|z\|_{\widetilde{X}}\leq\delta_{x,
r}$, we have
    \begin{align*}
& \sum_{j=1}^{n}2^{-j}\operatorname{Re}\left( \widetilde{Q}\left(
x -x_{j}+z\right)  -y_{j}\right)  ^{2}\\
&\geq \frac{1}{2}\sum_{j=1}^{n}2^{-j} \left(Q(x-x_j)-y_j+
\operatorname{Re}Z_{j} \right)^{2}+
2^{-n_{x, r}-3}r^2-2C^{2}\delta_{x, r}^{2}\\
&=\frac{1}{2}\sum_{j=1}^{n}2^{-j} \left(Q(x-x_j)-y_j+
\operatorname{Re}Z_{j} \right)^{2}+ a_{x, r}.
    \end{align*}
Therefore, for every $n>n_{x, r}$ and $z\in\widetilde{X}$ with
$\|z\|_{\widetilde{X}}\leq\delta_{x, r}$ we can estimate
\begin{align*}
\left\vert \widetilde{\varphi}_{n, r, \varepsilon}\left(
x+z\right) \right\vert & =\left\vert
\frac{1}{T_{n}}\int_{\mathbb{R}^{n}}b_{n}\left( y\right)
e^{-\kappa_{n}\sum_{j=1}^{n}2^{-j}\left(  \widetilde{Q}\left(  x_{0}%
-x_{j}+z\right)  -y_{j}\right)  ^{2}}dy\right\vert \\
 &  =\frac{1}{T_{n}}\int_{\mathbb{R}^{n}}b_{n}\left(  y\right)
e^{-\kappa _{n}\operatorname{Re}\sum_{j=1}^{n}2^{-j}\left(
\widetilde{Q}\left(
x_{0}-x_{j}+z\right)  -y_{j}\right)  ^{2}}dy\\
 & \leq\frac{2}{T_{n}}\int_{A_{n,r}}e^{-\kappa_{n}a_{x, r}
-\frac{1}{2}\kappa_n \sum_{j=1}^{n} 2^{-j}\left( Q(x-x_j)+
\operatorname{Re}(Z_j)-y_j\right)^{2} }dy\\
&\leq\frac{2 e^{-\kappa_n a_{x, r}}}{T_{n}}\int_{\R^n}e^{
-\frac{1}{2}\kappa_n \sum_{j=1}^{n} 2^{-j}\left( Q(x-x_j)+
\operatorname{Re}(Z_j)-y_j\right)^{2} }dy\\
&= \frac{2 e^{-\kappa_n a_{x, r}}}{T_{n}}\int_{\R^n}e^{ -\kappa_n
\sum_{j=1}^{n} 2^{-j}u_{j}^{2} }\left(\sqrt{2}\right)^{n}du \\
&= 2 (\sqrt{2})^n e^{-\kappa_n a_{x, r}}.
\end{align*}

\noindent Now, by choice of $\kappa_n =\kappa_{n, r, \varepsilon}$
(see $(\ast\ast\ast)$ above), we have
    $$
2 (\sqrt{2})^n e^{-\kappa_n a_{x, r}}\leq \frac{2 (\sqrt{2})^n
n!}{\kappa_{n}^{n}a_{x, r}^{n}}\leq\frac{1}{a_{x, r}^{n} n! M_n}.
    $$
Hence
$$|\widetilde{\varphi}_{n, r, \varepsilon}\left(  x+z\right)|\leq \frac
{1}{n!a_{x, r}^{n}M_n}$$ for all $n> n_{x, r}$, $\varepsilon\in
(0,1)$, and $z\in\widetilde{X}$ with $\|z\|\leq\delta_{x, r}$.
This establishes $(5)$.

In particular by taking $n_{x, r,\varepsilon}> n_{x, r}$
sufficiently large, we can guarantee that for $n>n_{x, r,
\varepsilon}$ and $\left\Vert z\right\Vert
_{\widetilde{X}}<\delta_{x, r}$ we have $\left\vert
\widetilde{\varphi}_{n}\left( x_{0}+z\right) \right\vert
<\varepsilon$, which proves $(6)$.

As for $(7)$, we can write, as above, $ \widetilde{Q}\left(
x-x_{j}+z\right)  =Q\left(  x-x_{j}\right) +Z_{j}$, where
$Z_{j}\in\mathbb{C}$ with $\left\vert Z_{j}\right\vert \leq
C\left\Vert z\right\Vert _{\widetilde{X}}\leq C\delta_{x, r}$, and
we have
$$
\operatorname{Re}\left(  \widetilde{Q}\left(  x-x_{j}+z\right)
-y_{j}\right)  ^{2}\geq \left(Q(x-x_j)-y_j+ \operatorname{Re}Z_{j}
\right)^{2}-2C^2\delta_{x,r}^{2},
$$
hence
\begin{align*}
\left\vert \widetilde{\varphi}_{n, r, \varepsilon}\left(
x+z\right) \right\vert &
=\frac{1}{T_{n}}\int_{\mathbb{R}^{n}}b_{n}\left(  y\right)
e^{-\kappa _{n}\operatorname{Re}\sum_{j=1}^{n}2^{-j}\left(
\widetilde{Q}\left(
x_{0}-x_{j}+z\right)  -y_{j}\right)  ^{2}}dy\\
& \\
& \leq\frac{1+\varepsilon}{T_{n}}\int_{\R^n}e^{-\kappa_{n}
\sum_{j=1}^{n}2^{-j}\left[\left(Q(x-x_j)+ \operatorname{Re}Z_{j}
-y_j
\right)^{2}-2C^2\delta_{x,r}^{2}\right]}dy\\
& =\frac{(1+\varepsilon)e^{2\kappa_n
C^2\delta_{x,r}^{2}}}{T_{n}}\int_{\R^n}e^{-\kappa_{n}
\sum_{j=1}^{n}2^{-j}
\left(Q(x-x_j)+ \operatorname{Re}Z_{j} -y_j \right)^{2}}dy\\
& = (1+\varepsilon)e^{2\kappa_n C^2\delta_{x,r}^{2}}.
\end{align*}
This is true for every $n\in\N$, $z\in\widetilde{X}$ with
$\|z\|_{\widetilde{X}}\leq \delta_{x, r}$. On the other hand, for
$n>n_{x, r}$ we already know that
    $$
\left\vert \widetilde{\varphi}_{n, r, \varepsilon}\left(
x+z\right) \right\vert \leq \frac{1}{a_{x,r} n!}
    $$
if $\|z\|_{\widetilde{X}}\leq \delta_{x, r}$, and since this
sequence converges to $0$ we can find $m_{x, r, \varepsilon}$ such
that
$$
\left\vert \widetilde{\varphi}_{n, r, \varepsilon}\left(
x+z\right) \right\vert \leq \frac{1}{a_{x,r} n!}<1+2\varepsilon
\textrm{ for } n\geq m_{x, r, \varepsilon}, \,
\|z\|_{\widetilde{W}}\leq\delta_{x, r}.
    $$
Then we can define
$$
\delta_{x, r, \varepsilon}= \min\left \{ \delta_{x, r},
\sqrt{\frac{1}{2\kappa_{1, r, \varepsilon} C^{2}}\log
\left(\frac{1+2\varepsilon}{1+\varepsilon}\right)}, ... ,
\sqrt{\frac{1}{2\kappa_{m_{x, r, \varepsilon}, r, \varepsilon}
C^{2}}\log \left(\frac{1+2\varepsilon}{1+\varepsilon}\right)}
\right \},
$$
so we get
$$
\left\vert \widetilde{\varphi}_{n, r, \varepsilon}\left(
x+z\right) \right\vert \leq 1+2\varepsilon
    $$
if $\|z\|_{\widetilde{X}}\leq \delta_{x, r, \varepsilon}$, for all
$n\in\N$ and $r>0$. In the case $r\geq 1$ the numbers $\kappa_{n,
r, \varepsilon}, \delta_{x,r}, a_{x,r}$ are independent of $r$,
hence so are $m_{x, r, \varepsilon}$ and $\delta_{x, r,
\varepsilon}$.
\end{proof}

\begin{remark}
{\em In this paper we will only use the above Lemma in the case
$r\geq 1$, and we will not need property $(4')$. The $M_n$ can
also be dropped from property $(5)$. However, we shall use the
full power of this Lemma in \cite{AFK3}.}
\end{remark}

\bigskip

\subsection{A refinement of the implicit function theorem for
holomorphic mappings on complex Banach spaces.}

We shall use the following refinement of \cite[Theorem 1.1]{CHP},
which provides a lower bound on the size of the domains of
existence of the solutions to a family of complex analytic
implicit equations, in a uniform manner with respect to the
equation, whenever appropriate bounds are available for the
functions defining the equations and its first derivatives with
respect to the dependent variable.

\medskip

\begin{proposition}\label{refinement of CHP}
Let $\psi :E\times\C\times P\to\C$ be a function, where $E$ is a
complex Banach space and $P$ is a set of parameters. Let $(x_0,
\mu^0)\in E\times\C$, $R>0$, and suppose that for each $p\in P$
the function
$$(x, \mu)\mapsto \psi_{p} (x, \mu)=\psi(x, \mu,
p)$$ is (complex) analytic on $B=\{(x, \mu) : \|x-x_0\|\leq R, \,
|\mu-\mu^0|\leq R\}$, and that there exist $M, A>0$ such that
    $$
\left|\frac{\partial\psi_{p}}{\partial\mu}(x_0, \mu_0)\right|\geq
A, \,\,\, \textrm{ and } \,\, |\psi_{p}(x,\mu)|\leq M \,\,
\textrm{ for all } (x, \mu)\in B, p\in P.
    $$
Then there exist
    $
0<r=r(A,R,M)<R$ and $ s=s(A,R,M)>0
    $
so that for each $p\in P$ there exists a unique analytic function
$\mu=\mu_{p}(x)$, defined on the ball $B(x_0, s)$, such that
$\psi_p(x, \mu)=0$ for $(x, \mu)\in B(x_0, s)\times B(\mu^0, r)$
if and only if $\mu=\mu_p(x)$ for $x\in B(x_0, s)$.
\end{proposition}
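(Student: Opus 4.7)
The result is a uniform-in-parameter sharpening of the classical holomorphic implicit function theorem, so my plan is to re-examine the standard one-complex-variable proof (Rouché's theorem plus Cauchy's integral formula) while carefully tracking every quantitative constant in terms of $A$, $R$, $M$ alone, and then to read off the implicit function as a contour integral whose domain of definition is automatically $p$-independent. Throughout I will use the standing hypothesis $\psi_p(x_0,\mu^0)=0$ that is built into the stated conclusion (and is present in the intended application to the Preiss norm, where $\mu^0$ is the base value of the functional being implicitly defined).

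First I would apply Cauchy's integral formula on the polydisc $B$ to obtain uniform bounds, in terms of $M$ and $R$ only, on the partial derivatives of $\psi_p$ of arbitrary orders throughout the smaller polydisc $\|x-x_0\|\leq R/2$, $|\mu-\mu^0|\leq R/2$. In particular $\partial_{\mu}\psi_p$ is Lipschitz on that smaller polydisc with constant depending only on $M,R$, and combining this with the hypothesis $|\partial_\mu\psi_p(x_0,\mu^0)|\geq A$ yields radii $r_1=r_1(A,R,M)$ and $s_1=s_1(A,R,M)$ such that
\[
|\partial_\mu\psi_p(x,\mu)|\geq A/2 \quad \text{for all } \|x-x_0\|\leq s_1,\ |\mu-\mu^0|\leq r_1,\ p\in P.
\]
Integrating this lower bound radially from $\mu^0$ gives $|\psi_p(x_0,\mu)|\geq (A/2)|\mu-\mu^0|$ on $|\mu-\mu^0|\leq r_1$, while a further Cauchy estimate bounds $|\partial_x\psi_p|$ uniformly in $p$. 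Shrinking $s$ down to some $s=s(A,R,M)\leq s_1$ then produces a uniform positive lower bound for $|\psi_p(x,\mu)|$ along the entire circle $|\mu-\mu^0|=r$ (with $r:=r_1$), valid simultaneously for every $\|x-x_0\|\leq s$ and every $p\in P$.

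Once this circle is secured, I would invoke the argument principle: for each fixed $x\in B(x_0,s)$ the integer
\[
N_p(x)=\frac{1}{2\pi i}\oint_{|\mu-\mu^0|=r}\frac{\partial_\mu\psi_p(x,\mu)}{\psi_p(x,\mu)}\,d\mu
\]
counts the zeros of $\psi_p(x,\cdot)$ inside $B(\mu^0,r)$, is continuous in $x$, and equals $1$ at $x=x_0$ since $\mu^0$ is a simple zero by the derivative lower bound. Hence $N_p(x)\equiv 1$ on the connected ball $B(x_0,s)$, so $\psi_p(x,\cdot)$ has exactly one zero $\mu_p(x)$ in $B(\mu^0,r)$, giving the biconditional in the conclusion. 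Analyticity of $\mu_p$ follows from the explicit representation
\[
\mu_p(x)=\frac{1}{2\pi i}\oint_{|\mu-\mu^0|=r}\mu\,\frac{\partial_\mu\psi_p(x,\mu)}{\psi_p(x,\mu)}\,d\mu,
\]
since the integrand is analytic in $x$ and the contour is fixed.

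The main obstacle is not any single step in isolation but the bookkeeping required to guarantee that every quantitative constant (the second-derivative bounds, the Lipschitz constant of $\partial_\mu\psi_p$, the lower bound on $|\psi_p|$ along the fixed circle) depends only on $A$, $R$, $M$. This is enforced by applying Cauchy's integral formula exclusively on the single polydisc $B$, whose radius $R$ is uniform in $p$, so no $p$-dependent auxiliary data ever enters any estimate and the final radii $r,s$ come out independent of $p$, as required for the later application to the family $\widetilde{\varphi}_n$.
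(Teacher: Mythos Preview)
Your proposal is correct and follows essentially the same route as the paper, which defers to \cite{CHP} and identifies the two key steps as a Taylor/Cauchy estimation of $\psi_p$ in the $\mu$-variable together with an application of Rouch\'e's theorem comparing $\mu\mapsto\psi_p(x_0,\mu)$ and $\mu\mapsto\psi_p(x,\mu)$; your Cauchy-estimate plus argument-principle scheme is exactly this, with the variable $x\in E$ treated as a parameter so that all constants depend only on $A,R,M$. One small point of phrasing: the lower bound $|\psi_p(x_0,\mu)|\geq (A/2)|\mu-\mu^0|$ does not literally follow from ``integrating the lower bound on $|\partial_\mu\psi_p|$ radially'' (modulus of an integral is bounded above, not below, by the integral of the modulus), but it follows immediately from the Taylor expansion $\psi_p(x_0,\mu)=\partial_\mu\psi_p(x_0,\mu^0)(\mu-\mu^0)+O(|\mu-\mu^0|^2)$ using the very Cauchy bounds you already have, which is precisely the ``estimation of the Taylor series'' the paper invokes.
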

\begin{proof}
This is essentially a restatement of Theorem 1.1 in \cite{CHP},
where the result is proved (with estimates for $r$ and $s$) in the
case when $P$ is a singleton and $E=\C^n$. The part of the proof
in \cite{CHP} that does not follow directly from the implicit
function theorem for holomorphic mappings combines an estimation
of the Taylor series of $\psi_{p}$ at $(x_0, \mu_0)$ with respect
to the variable $\mu\in\C$, and an application of Rouche's theorem
to the functions $\C\ni \mu\mapsto \psi_{p}(x_0, \mu)\in\C$ and
$\C\ni\mu\mapsto \psi_{p}(x, \mu)\in\C$. In these two steps the
variable $x\in E$ can be regarded as a parameter, so the same
argument applies, with obvious changes, to the present situation.
\end{proof}

\medskip

\subsection{Proof of the main result in the case of a $1$-Lipschitz function taking values in $[0,1]$.}

Let us define
$$A=\{\alpha=\{\alpha_n\}_{n=1}^{\infty}\in\ell_{\infty} \,\, : \,\,
1 \leq \alpha_n\leq 1001 \, \textrm{ for all } n\in\N\}.$$ For
every $\alpha\in A$ and $z=\{z_n\}_{n=1}^{\infty}\in
\widetilde{c_{0}}\setminus\{0\}$, let us denote $\alpha
z:=\{\alpha_n z_n \}_{n=1}^{\infty}$, and observe that $\alpha
z\in \widetilde{c_{0}}\setminus\{0\}$. In fact the mapping
$\alpha: \widetilde{c_0}\to \widetilde{c_0}$, defined by
$\alpha(z)=\alpha z$ is a linear isomorphism satisfying
$\|z\|_{\infty}\leq \|\alpha z\|_{\infty}\leq 1001\|z\|_{\infty}$.

\begin{lemma}\label{holomorphic extension of the composition of lambda with Phi}
Let $r\geq 1$, and let $\varphi_n =\varphi_{n, r, \varepsilon} ,
\widetilde{\varphi}_n =\widetilde{\varphi}_{n, r, \varepsilon}$ be
the collections of functions defined in Lemma \ref{construction of
the sup partition of unity}. We have that:
\begin{enumerate}
\item There exists an open
neighborhood $\widetilde{V}_{1}\subset\widetilde{V}$ of $X$ in
$\widetilde{X}$ such that the mapping $\widetilde{\Phi}_{\alpha,
r,
\varepsilon}=\widetilde{\Phi}:\widetilde{V}_{1}\to\widetilde{c_0}$
defined by
    $$
\widetilde{\Phi}(z)=\{\alpha_n\widetilde{\varphi}_{n}(z)\}_{n=1}^{\infty}
    $$
is holomorphic, for any $\alpha\in A$, $r\geq 1$, $\varepsilon \in
(0,1)$.
\item  Fix $\varepsilon\in (0, 1)$. Then there exists an open neighborhood $\widetilde{W}_{\varepsilon}$
of $X$ in $\widetilde{X}$ such that for every $\alpha\in A$,
$r\geq 1$, there is a holomorphic extension
$\widetilde{F}_{\alpha, r, \varepsilon}$ of
$\lambda\circ\Phi_{\alpha, r, \varepsilon}$, defined on
$\widetilde{W}_{\varepsilon}$, where $\lambda$ is the Preiss norm
from section 4.1, and such that
    $$
|\widetilde{F}_{\alpha, r, \varepsilon}(x+iy)-
\lambda\circ\Phi_{\alpha, r, \varepsilon}(x)| \leq\varepsilon
    $$
for every $x, y\in X$ with $x+iy\in \widetilde{W}_\varepsilon$.
\end{enumerate}
\end{lemma}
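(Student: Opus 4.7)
For part (1), the plan is to exploit property (5) of Lemma \ref{construction of the sup partition of unity}: since we are in the regime $r \geq 1$, the numbers $\delta_x := \delta_{x,1}$, $a_x := a_{x,1}$, $n_x := n_{x,1}$ can be chosen independently of $\alpha$, $r$ and $\varepsilon$, and they provide the bound $|\widetilde\varphi_n(x + w)| \leq 1/(n!\, a_x^n M_n)$ whenever $n > n_x$ and $\|w\|_{\widetilde X} < \delta_x$. Define $\widetilde V_1 := \bigcup_{x \in X} B_{\widetilde X}(x, \delta_x) \cap W_{\delta_Q}$. At any point $w_0 \in \widetilde V_1$ lying near some $x \in X$, the components of $\widetilde \Phi(w) = \{\alpha_n \widetilde \varphi_n(w)\}$ are uniformly bounded (using property (7)) and their tails decay like $1001/(n!\, a_x^n M_n) \to 0$, so $\widetilde \Phi$ maps $\widetilde V_1$ into $\widetilde{c_0}$ and is locally bounded. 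Since each coordinate $w \mapsto \alpha_n \widetilde\varphi_n(w)$ is holomorphic, the standard characterization of Banach-space-valued holomorphic maps gives that $\widetilde\Phi : \widetilde V_1 \to \widetilde{c_0}$ is holomorphic, uniformly in $\alpha \in A$, $r \geq 1$, $\varepsilon \in (0,1)$.

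For part (2), the plan is to apply Proposition \ref{refinement of CHP} to the complex implicit equation defining the Preiss norm, after a rescaling that produces a common base point for all values of the parameter. Fix $x_0 \in X$ and set $p = (\alpha, r) \in P := A \times [1, \infty)$. Define $\mu_p^0 := \lambda(\Phi_{\alpha, r, \varepsilon}(x_0)) \in [1, 4004]$ and
\[
\psi_p(w, \mu) := \widetilde C\bigl((\mu\, \mu_p^0)^{-1}\, \widetilde \Phi_{\alpha, r, \varepsilon}(x_0 + w)\bigr) - 1.
\]
Since $(\mu \mu_p^0)^{-1} z = \mu^{-1} (\mu_p^0)^{-1} z$, the base point $(w_0, \mu^0) = (0, 1)$ is independent of $p$ and $\psi_p(0, 1) = C(\Phi(x_0)/\mu_p^0) - 1 = 0$. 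A direct computation, using that $v := \Phi(x_0)/\mu_p^0$ lies on the Preiss unit sphere so that $\sum v_n^{2n} = 1$, yields
\[
\left| \frac{\partial \psi_p}{\partial \mu}(0, 1) \right| = \sum_{n=1}^{\infty} 2n\, v_n^{2n} \geq 2,
\]
providing the uniform lower bound $A = 2$ required by the proposition.

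The upper bound on $|\psi_p|$ over $B := \{(w, \mu) : \|w\|_{\widetilde X} \leq R,\, |\mu - 1| \leq R\}$ with $R := \min\{\delta_{x_0, \varepsilon}, 1/2\}$ is obtained by splitting $\widetilde C((\mu\mu_p^0)^{-1}\widetilde\Phi(x_0+w)) = \sum_{n \leq n_{x_0}} + \sum_{n > n_{x_0}}$: the head has at most $n_{x_0}$ terms, each bounded using property (7) together with $|\mu_p^0| \geq 1$, $|\mu| \geq 1/2$, $|\alpha_n| \leq 1001$; the tail is controlled by the factorial-type decay of property (5), which makes $\sum_{n > n_{x_0}}|(\mu \mu_p^0)^{-1} \alpha_n \widetilde\varphi_n(x_0 + w)|^{2n}$ summable to a constant depending only on $x_0$. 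This gives $|\psi_p| \leq M(x_0)$ on $B$, uniformly in $p$. Proposition \ref{refinement of CHP} then produces $s_0 = s(A, R, M) > 0$ (independent of $p$) and a unique analytic $\mu_p : B_{\widetilde X}(0, s_0) \to \C$ with $\psi_p(w, \mu_p(w)) = 0$ and $\mu_p(0) = 1$. Setting $\widetilde F_{\alpha, r, \varepsilon}(x_0 + w) := \mu_p^0\, \mu_p(w)$ gives a holomorphic extension of $\lambda \circ \Phi_{\alpha, r, \varepsilon}$ near $x_0$, and by uniqueness of the solution to the unscaled equation $\widetilde C(\widetilde F^{-1} \widetilde \Phi(x_0 + w)) = 1$ (applied to real overlaps where both candidate extensions agree with $\lambda \circ \Phi$) these local pieces patch consistently across different $x_0 \in X$.

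For the $\varepsilon$-closeness, one writes $|\widetilde F_{\alpha,r,\varepsilon}(x_0+iy) - \lambda(\Phi(x_0))| = \mu_p^0 |\mu_p(iy) - 1| \leq 4004\,|\mu_p(iy) - 1|$ and uses a Cauchy estimate on the bounded holomorphic function $\mu_p$ (whose sup-norm bound on $B_{\widetilde X}(0, s_0)$ is uniform in $p$) to get $|\mu_p(w) - 1| \leq K(x_0) \|w\|_{\widetilde X}$ with $K(x_0)$ independent of $p$. Choosing $s_{x_0, \varepsilon} := \min\{s_0,\, \varepsilon/(4004\, K(x_0))\}$ and setting $\widetilde W_\varepsilon := \bigcup_{x_0 \in X} B_{\widetilde X}(x_0, s_{x_0, \varepsilon})$ gives the desired neighborhood, independent of $\alpha$ and $r$. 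The main technical obstacle is the uniform-in-$p$ upper bound on $\psi_p$: the mere fact that $\widetilde\Phi(x_0 + w) \in \widetilde{c_0}$ is not enough, because $\widetilde{c_0}$ balls do not preserve tail decay rates, so one must genuinely exploit the precise factorial tail estimate of property (5) of Lemma \ref{construction of the sup partition of unity}, which is uniform with respect to $\alpha$, $r$ and $\varepsilon$.
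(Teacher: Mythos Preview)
Your argument is correct and follows the same overall route as the paper: part (1) via the factorial tail decay of property (5) of Lemma \ref{construction of the sup partition of unity}, and part (2) via Proposition \ref{refinement of CHP} applied to the implicit equation for the Preiss norm, with the upper bound $M$ coming from properties (5) and (7) and the lower bound $A$ from property (3).

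The one noteworthy difference is your rescaling $\mu\mapsto\mu\,\mu_p^0$. The paper applies Proposition \ref{refinement of CHP} directly to $\psi_{\alpha,r,\varepsilon}(w,\mu)=\widetilde C(\mu^{-1}\widetilde\Phi(w))-1$ at the base point $\mu^0=\lambda(\Phi_{\alpha,r,\varepsilon}(x))$, which varies with the parameter $p=(\alpha,r)$; this is harmless since the radii $r,s$ in the proposition depend only on $A,R,M$, but it does stretch the literal statement slightly. Your rescaling fixes the base point at $(0,1)$ once and for all and, because $v=\Phi(x_0)/\mu_p^0$ lies on the Preiss unit sphere, gives the clean uniform bound $A=2$ in place of the paper's much smaller $A_x=4004^{-2m_{x,1}-1}$. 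Conversely, the paper obtains the $\varepsilon$-closeness for free by choosing $R_{x,\varepsilon}<\varepsilon$ and invoking the containment $\widetilde F(B(x,s_x))\subset B(\mu^0,r_x)$ with $r_x<R_{x,\varepsilon}$ that Proposition \ref{refinement of CHP} already guarantees; your Cauchy-estimate step is correct but unnecessary once you simply take $R\le\varepsilon/4004$ from the outset.
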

\begin{proof}
Since $r\geq 1$ we have $a_{x,r}=a_{x,1}$ and
$\delta_{x,r}=\delta_{x,1}$ in Lemma \ref{construction of the sup
partition of unity}, so we will simply denote these numbers by
$a_x$ and $\delta_x$, respectively. Because the numerical series
$\sum_{n=1}^{\infty}1/n!a_{x}^{n}$ is convergent, and bearing in
mind property $(5)$ of Lemma \ref{construction of the sup
partition of unity}, we have that the series of functions
    $$
\sum_{n=1}^{\infty}|\alpha_n \widetilde{\varphi}_{n}\left(
z\right)|
    $$
is uniformly convergent on the ball $B_{\widetilde{X}}(x,
\delta_{x})$, which clearly implies that the series
    $$
\sum_{n=1}^{\infty}\alpha_n\widetilde{\varphi}_{n}(z)e_n
    $$
is uniformly convergent for $z\in B_{\widetilde{X}}(x,
\delta_{x})$, for any $\{a_n\}_{n=1}^{\infty}\in\ell_{\infty}$. If
we set $\widetilde{V}_1=\bigcup_{x\in X}B_{\widetilde{X}}(x,
\delta_{x})$ and note that for each $n\in\N$ the mapping
$$\widetilde{V}\ni z\mapsto \varphi_n (z)e_n
\in\widetilde{c_0}$$ is holomorphic, it is clear that
$\widetilde{\Phi}_{\alpha, r,
\varepsilon}=\sum_{n=1}^{\infty}\widetilde{\varphi}_{n, r,
\varepsilon} e_n$, being a series of holomorphic mappings which
converges locally uniformly in $\widetilde{V}_1$, defines a
holomorphic mapping from $\widetilde{V}_1$ into $\widetilde{c_0}$.
Note that $\widetilde{V}_1$ is independent of $\alpha, r,
\varepsilon$ because so is $\delta_x$.

\medskip

Let us now prove $(2)$. Define the function $\psi_{\alpha, r,
\varepsilon}: \widetilde{V}_1 \times\C\setminus\{0\} \to \C$ by
    $$
\widetilde{\psi}_{\alpha, r, \varepsilon}(w,
\mu)=\sum_{n=0}^{\infty}\left( \frac{\alpha_n
\widetilde{\varphi}_{n, r, \varepsilon}(w)}{\mu}\right)^{2n}-1.
    $$
The function $\widetilde{\psi}_{\alpha, r, \varepsilon}$ is
holomorphic on $\widetilde{V}_1$, because it is the composition of
$\widetilde{\Phi}_{\alpha, r, \varepsilon}:\widetilde{V}_1
\to\widetilde{c}_0$ with the function
$\widetilde{C}:\widetilde{c}_0\times\C\setminus\{0\}\to\C$ defined
by $\widetilde{C}(\{z_n\})=\sum_{n=1}^{\infty} z_{n}^{2n}-1$,
which is also holomorphic on
$\widetilde{c}_0\times\C\setminus\{0\}$.

For $\varepsilon>0$ fixed and $x\in X$ given, we  are going to
apply the preceding Proposition to the family of functions
$\widetilde{\psi}_{\alpha, r, \varepsilon}$ indexed by
$P=\{(\alpha, r) \, : \, \alpha\in A, r\geq 1\}$. So we have to
find appropriate bounds for $\widetilde{\psi}_{\alpha, r,
\varepsilon}$ and $\partial{\psi}_{\alpha, r,
\varepsilon}/\partial \mu$.

For our given $x\in X$, we can apply properties $(5)$ and $(7)$ of
Lemma \ref{construction of the sup partition of unity} to find
$n_x, a_x, \delta_x>0$ (independent of $r, \varepsilon$) and
$\delta_{x, \varepsilon}>0$ (independent of $r$) such that
    $$
|\widetilde {\varphi}_{n, r, \varepsilon} (w)|<\frac{1}{n!
a_x^{n}} \,\, \textrm{ for } \,  n>n_{x}, \, r\geq 1,  \, w\in
\widetilde{X} \textrm{ with } \|w-x\|_{\widetilde{X}}<\delta_x.
    $$
and
    $$
|\widetilde {\varphi}_{n, r, \varepsilon} (w)|\leq 1+2\varepsilon
\,\, \textrm{ for } n\in\N, r\geq 1, \, w\in \widetilde{X}
\textrm{ with }\|w-x\|_{\widetilde{X}}\leq \delta_{x,
\varepsilon}.
    $$
Let us fix $0<R_{x, \varepsilon} <\min\{ \varepsilon, 1/6,
\delta_x, \delta_{x, \varepsilon}\}$, and observe that
$\widetilde{V}_1 \supset B_{\widetilde{X}}(x, R_{x,
\varepsilon})$.

Note that $\lambda(\Phi_{\alpha, r, \varepsilon}(x))\geq
\|\{\alpha_n\varphi_{n}(x)\}_{n=1}^{\infty}\|_{\infty}>1$ by
property $(3)$ of Lemma \ref{construction of the sup partition of
unity}, so in particular $|\mu|\geq \lambda(\Phi_{\alpha, r,
\varepsilon}(x))-R_{x, \varepsilon}\geq 1-1/6=5/6$ whenever $|\mu
-\lambda(\Phi_{\alpha, r, \varepsilon}(x))|\leq R_{x,
\varepsilon}$.

Then we have, for every $(\alpha, r)\in P$, for $
\|w-x\|_{\widetilde{X}}\leq R_{x, \varepsilon}$, and for $|\mu
-\lambda(\Phi_{\alpha, r, \varepsilon}(x))|\leq R_{x,
\varepsilon}$,
    \begin{align*}
    & |\widetilde{\psi}_{\alpha, r, \varepsilon}(w, \mu)|\leq
    \left | \sum_{n=1}^{\infty} \left( \frac{\alpha_n
\widetilde{\varphi}_{n, r, \varepsilon}(w)}{\mu}\right)^{2n}-1
\right|\\
& \leq 1+
\sum_{n=1}^{n_x}\left(\frac{1001(1+2\varepsilon)}{5/6}\right)^{2n}
+\sum_{n=n_x +1}^{\infty}\left(\frac{1}{n! a_{x}^{n}}\right)^{2n}
:= M_{x, \varepsilon}
    \end{align*}
(observe that $M_{x, \varepsilon}$ is independent of $(r,
\alpha)\in P$).

On the other hand we can apply property $(3)$ of Lemma
\ref{construction of the sup partition of unity} to find, for
every $r\geq 1$, numbers
$$m_{x,r}\leq m_{x,1} \, \textrm{ such that } \, \varphi_{m_{x,
r}, r, \varepsilon}(x)>1.$$ Since $\lambda\leq
2\|\cdot\|_{\infty}$ on $c_0$, and using property $(2)$ of Lemma
\ref{construction of the sup partition of unity}, we have
    $$
\lambda(\Phi_{\alpha, r, \varepsilon}(x))\leq 2\|\Phi_{\alpha, r,
\varepsilon}(x)\|_{\infty}\leq 2002\|\Phi_{1, r,
\varepsilon}(x)\|_{\infty}\leq 4004.
    $$
So, if we set $$A_{x}:=\frac{1}{4004^{2m_{x,1}+1}}$$ (clearly
independent of $(r, \alpha)\in P$), and we use the fact that
$m_{x, r}\leq m_{x,1}$ for all $r\geq 1$, we can estimate
    \begin{align*}
    & \left | \frac{\partial \psi_{\alpha, r, \varepsilon}}{\partial\mu}
    (x, \lambda(\Phi_{\alpha, r, \varepsilon} (x))) \right |
    = \left | -\sum_{n=1}^{\infty}2n \left( \frac{\alpha_n \varphi_n (x)}{\lambda(\Phi_{\alpha, r, \varepsilon}(x))}
    \right)^{2n} \cdot \frac{1}{\lambda(\Phi_{\alpha, r,
    \varepsilon}(x))}\right|\\
    & \geq \sum_{n=1}^{\infty} 2n \left( \frac{\varphi_n
    (x)}{4004}
    \right)^{2n} \cdot \frac{1}{4004}\geq
    \frac{2m_{x,r}}{4004}\left(\frac{\varphi_{m_{x,r}}(x)}{4004}\right)^{2m_{x,r}}\\
    & \geq \frac{m_{x,r}}{4004} \left(\frac{1}{4004}\right)^{2m_{x,r}}\geq
    \frac{1}{4004}\left(\frac{1}{4004} \right)^{2m_{x,r}}\geq
    \frac{1}{4004}\left( \frac{1}{4004}\right)^{2m_{x,1}}=A_{x}.
    \end{align*}

Therefore, according to the preceding Proposition, we can find a
number $s_x= s_{x, \varepsilon}>0$ (independent of $\alpha$, $r$)
such that there is a holomorphic solution
$\widetilde{\mu}=\widetilde{F}_{\alpha, r, \varepsilon}(w)$ to the
implicit equation $\widetilde{\psi}_{\alpha, r, \varepsilon}(w,
\mu)=0$, defined on the ball $B_{\widetilde{X}}(x, s_{x})$, for
every $\alpha\in A, r\geq 1$. Since the solution is locally
unique, two holomorphic functions which coincide on a neighborhood
of a point are equal to one another in the connected component of
that point, and the function $w\mapsto
\widetilde{\lambda}(\widetilde{\Phi}_{\alpha, r, \varepsilon}(w))$
also solves the implicit equation $\widetilde{\psi}_{\alpha, r,
\varepsilon}(w, \mu)=0$ for $\mu$ in terms of $w$ in a
neighborhood of $x$, one easily deduces that
$\widetilde{F}_{\alpha, r, \varepsilon}$ can be defined on all of
    $$
\widetilde{W}_{\varepsilon}:=\bigcup_{x\in X}B_{\widetilde{X}}(x,
s_{x, \varepsilon}),
    $$
and that $\widetilde{F}_{\alpha, r, \varepsilon}$ is a holomorphic
extension of $\lambda\circ\Phi_{\alpha, r, \varepsilon}$.

Besides, by the same Proposition, we also have that for every
$\alpha\in A, r\geq 1$ the function $\widetilde{F}_{\alpha, r,
\varepsilon}$ maps the ball $B_{\widetilde{X}}(x, s_{x})$ into a
disc $B_{\C}(\lambda(\Phi_{\alpha, r, \varepsilon}(x)), r_x)$,
where $0<r_x=r_{x, \varepsilon}< R_{x, \varepsilon}\leq
\varepsilon$. Hence we have that $$ |\widetilde{F}_{\alpha, r,
\varepsilon}(x+iy)- \lambda\circ\Phi_{\alpha, r, \varepsilon}(x)|
\leq\varepsilon
    $$
for every $x, y\in X$ with $x+iy\in \widetilde{W}_\varepsilon$.
\end{proof}

\bigskip

Now we turn to the proof of Theorem \ref{analytic Lip
approximation}. We shall first consider the case when $f:X\to [1,
1001]$ is $L$-Lipschitz with $0< L\leq 1$. Define $r=1/L$, so
$r\geq 1$, rename $\varepsilon=\delta \in (0, 10^{-4})$, and apply
Lemma \ref{construction of the sup partition of unity} to find a
corresponding collection of functions $\varphi_n=\varphi_{n,
\frac{1}{L}, \delta}$, $n\in\N$. The functions $\varphi_n$ are
thus $2C/r$-Lipschitz, where we denote $C=\textrm{Lip}(Q)$.

Define a function $g:X\to\mathbb{R}$ by
    $$
g(x)=\frac{\lambda(\{f(x_n)\varphi_{n}(x)\}_{n=1}^{\infty})}{\lambda(\{\varphi_{n}(x)\}_{n=1}^{\infty})}=
\frac{\lambda\circ\Phi_{\alpha, r,
\delta}(x)}{\lambda\circ\Phi_{1, r, \delta}(x)}
    $$
where here we denote $\alpha=\{f(x_n)\}_{n=1}^{\infty}\in A$, and
also $1=(1, 1, 1, ...)\in A\subset \ell_{\infty}$.

The function $g$ is well defined, is real analytic, and has a
holomorphic extension $\widetilde{g}$ defined on the neighborhood
$\widetilde{U}:=\widetilde{W}_{\delta}$ of $X$ in $\widetilde{X}$
provided by Lemma \ref{holomorphic extension of the composition of
lambda with Phi}, given by
    $$
\widetilde{g}(w)=\frac{\widetilde{F}_{\alpha, r,
\delta}(w)}{\widetilde{F}_{1, r, \delta}(w)}.
    $$

Since the functions $\varphi_n$ are $2C/r$-Lipschitz, the norm
$\lambda:c_0\to\R$ is $2$-Lipschitz (with respect to the usual
norm $\|\cdot\|_{\infty}$ of $c_0$), and $1\leq f\leq 1001$, we
have
\begin{align*}
&
|\lambda(\{f(x_n)\varphi_{n}(x)\}_{n=1}^{\infty})-\lambda(\{f(x_n)\varphi_{n}(y)\}_{n=1}^{\infty})|\leq
\\
&  2\| \{f(x_n)(\varphi_{n}(x)-\varphi_{n}(y))\}_{n=1}^{\infty})
\|_{\infty}\leq 2002\|
\{\varphi_{n}(x)-\varphi_{n}(y)\}_{n=1}^{\infty})
\|_{\infty}\leq\\
&  \frac{4004C}{r}\|x-y\|,
\end{align*}
that is the function $\lambda\circ\Phi_{\alpha, r, \delta}$ is
$4004C/r$-Lipschitz on $X$, and is bounded by $2002$. Similarly,
since the function $t\mapsto 1/t$ is $1$-Lipschitz on $[1,
\infty)$ and $\lambda\circ\Phi_{1, r, \delta}$ is bounded below by
$1$, we have that the function $1/\lambda\circ\Phi_{1, r, \delta}$
is also $4004C/r$-Lipschitz on $X$, and bounded above by $1$.
Therefore the product satisfies
    $$
\textrm{Lip}(g)\leq 2002\times
\frac{4004C}{r}+1\times\frac{4004C}{r}=\frac{8020012 C}{r}=8020012
C\textrm{Lip}(f).
    $$
On the other hand, we have
\begin{align*}
\left\vert g(x)-f(x)\right\vert  &  =\left\vert
\frac{\lambda\left( \left\{ f\left(  x_{j}\right)
\varphi_{j}\left(  x\right) \right\}  \right) }{\lambda\left(
\left\{  \varphi_{j}\left( x\right)  \right\}  \right)
}-f\left(  x\right)  \right\vert \\
& \\
&  =\left\vert \frac{\lambda\left(  \left\{  f\left(  x_{j}\right)
\varphi_{j}\left(  x\right)  \right\}  \right)  }{\lambda\left(
\left\{ \varphi_{j}\left(  x\right)  \right\}  \right)
}-\frac{f\left(  x\right) \lambda\left(  \left\{ \varphi_{j}\left(
x\right)  \right\}  \right) }{\lambda\left( \left\{
\varphi_{j}\left(  x\right)  \right\}  \right)
}\right\vert \\
& \\
&  =\frac{1}{\lambda\left(  \left\{  \varphi_{j}\left(  x\right)
\right\} \right)  }|\lambda\left(  \left\{  f\left(  x_{j}\right)
\varphi_{j}\left( x\right)  \right\}  \right)  -\lambda\left(
\left\{  f\left(  x\right)
\varphi_{j}\left(  x\right)  \right\}  \right)  |\\
& \\
&  \leq\frac{1}{\lambda\left(  \left\{  \varphi_{j}\left( x\right)
\right\} \right)  }\lambda\left(  \left\{
f(x_{j})\varphi_{j}\left(  x\right)
\right\}  -\{f\left(  x\right)  \varphi_{j}\left(  x\right)  \}\right) \\
& \\
&  =\frac{1}{\lambda\left(  \left\{  \varphi_{j}\left(  x\right)
\right\} \right)  }\lambda\left(  \{\varphi_{j}\left(  x\right)
\left(  f\left(
x_{j}\right)  -f\left(  x\right)  \right)  \}\right)  \text{.}%
\end{align*}
Now recall that
\begin{align*}
\lambda\left(  \{\varphi_{j}\left(  x\right)  \left(  f\left(
x_{j}\right) -f\left(  x\right)  \right)  \}\right)   &
\leq2\left\Vert \{\varphi _{j}\left(  x\right)  \left(  f\left(
x_{j}\right)  -f\left(  x\right)
\right)  \}\right\Vert _{\infty}\\
& \\
&  =2\max_{j}\left\{  \varphi_{j}\left(  x\right)  \left\vert
f\left(
x_{j}\right)  -f\left(  x\right)  \right\vert \right\}  \text{.}%
\end{align*}
Set $J=\{j:x\in D_{Q}(x_{j},4r)\}$. For $j\in J$, according to
Lemma \ref{separating function Q lemma} we have $\left\Vert
x-x_{j}\right\Vert _{X}<8r$ and so, because $\textrm{Lip}(f)=1/r$,
\[
\varphi_{j}\left(  x\right)  \left\vert f\left(  x_{j}\right)
-f\left( x\right)  \right\vert \leq \varphi_{j}\left(  x\right) 8.
\]
It follows that for $j\in J$%
\[
\frac{\varphi_{j}\left(  x\right)  \left\vert f\left( x_{j}\right)
-f\left( x\right)  \right\vert }{\lambda\left( \left\{
\varphi_{j}\left(  x\right) \right\}  \right)
}\leq\frac{\varphi_{j}\left(  x\right)  8 }{\left\Vert \left\{
\varphi_{j}\left(  x\right)  \right\}  \right\Vert _{\infty}}\leq
8.
\]
On the other hand, for $j\notin J$ we have by part (4) of Lemma
\ref{construction of the sup partition of unity}, that
\[
\varphi_{j}\left(  x\right)  \left\vert f\left(  x_{j}\right)
-f\left( x\right)  \right\vert \leq 2002\varphi_{j}\left( x\right)
\leq 2002 \delta<1.
\]
Hence, given that $\lambda\left(  \left\{  \varphi_{j}\left(
x\right) \right\}  \right)  \geq1$, we have for $j\notin J$
\[
\frac{\varphi_{j}\left(  x\right)  \left\vert f\left( x_{j}\right)
-f\left( x\right)  \right\vert }{\lambda\left( \left\{
\varphi_{j}\left(  x\right) \right\}  \right)  }\leq 1.
\]
It follows that
\[
\left\vert g(x)-f(x)\right\vert \leq 8.
\]

If we reset $C$ to $8020012 C$, this argument proves Theorem
\ref{analytic Lip approximation} in the case when $\varepsilon
=8$, $f:X\to [1, 1001]$, $\textrm{Lip}(f)\leq 1$.

Moreover, according to property $(2)$ of Lemma \ref{holomorphic
extension of the composition of lambda with Phi}, we have
$$
|\widetilde{F}_{\alpha, r, \delta}(x+iy)- \lambda(\Phi_{\alpha, r,
\delta}(x))| \leq\delta
    $$
for every $x, y\in X$ with $x+iy\in \widetilde{W}_\delta$,
$\alpha\in A$. Therefore, taking into account that
$\lambda(\Phi_{\alpha, r, \delta}(x))\leq 2002$ and
$1\leq\lambda(\Phi_{1, r, \delta}(x))\leq 2$, we have
\begin{eqnarray*}
& &|\widetilde{g}(x+iy)-g(x)|=\left | \frac{\widetilde{F}_{\alpha,
r, \delta}(x+iy)}{\widetilde{F}_{1, r, \delta}(x+iy)}-
\frac{\lambda (\Phi_{\alpha, r, \delta}(x))}{\lambda (\Phi_{1, r,
\delta}(x))}
\right | =\\
& & \left |\frac{1}{\widetilde{F}_{1, r,
\delta}(x+iy)\lambda(\Phi_{1, r, \delta}(x))}  \right | \cdot  |
\lambda(\Phi_{1,r, \delta}(x)) \left(\widetilde{F}_{\alpha, r,
\delta}(x+iy)- \lambda(\Phi_{\alpha, r, \delta}(x))\right)
\\ & & +\lambda(\Phi_{\alpha, r, \delta}(x))\left(\lambda(\Phi_{1, r,
\delta}(x))-\widetilde{F}_{1, r, \delta}(x+iy)\right) |  \leq \\
& & \frac{1}{1-\delta}\left( 2\delta
+2002\delta\right)=\frac{\delta}{1-\delta}2004
\end{eqnarray*}
for every $x, y\in X$ with $x+iy\in \widetilde{W}_\delta$,
$\alpha\in A$, $r\geq 1$.

\medskip

Up to scaling and subtracting appropriate constants we have thus
proved the following intermediate result.
\begin{proposition}\label{intermediate result}
Let $X$ be a Banach space having a separating polynomial, and let
$\eta\in (0, 1/2)$. Then there exists $C_{\eta}\geq 1$ (depending
only on $X$ and $\eta$) such that, for every $\delta>0$ there is
an open neighborhood $\widetilde{U}_{\delta, \eta}$ of $X$ in
$\widetilde{X}$ such that, for every Lipschitz function
$f:X\to[0,1]$ with $\textrm{Lip}(f)\leq 1$, there exists a real
analytic function $g:X\to\mathbb{R}$, with holomorphic extension
$\widetilde {g}:\widetilde{U}_{\delta, \eta}\to\mathbb{C}$, such
that
\begin{enumerate}
\item $|f(x)-g(x)|\leq \eta$ for all $x\in X$.
\item $g$ is Lipschitz, with $\textrm{Lip}(g)\leq
C_\eta \textrm{Lip}(f)$.
\item $|\widetilde{g}(x+iy)-g(x)|\leq \delta$ for all $z=x+iy\in\widetilde{U}_{\delta, \eta}$.
\end{enumerate}
\end{proposition}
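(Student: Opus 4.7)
The plan is to reuse the intermediate argument given just above, replacing the specific constant $1001$ by an integer $N=N(\eta)$ and undoing the corresponding affine change of scale.

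First I would observe that every occurrence of the constant $1001$ in the intermediate argument serves only as a uniform upper bound for the sequences $\alpha_{n}=f(x_{n})$. Consequently, for any fixed $N\geq 2$, if one replaces the set $A$ by $A_{N}:=\{\alpha\in\ell_{\infty} : 1\leq\alpha_{n}\leq N\}$ throughout (in Lemma~\ref{holomorphic extension of the composition of lambda with Phi}, in the bounds $M_{x,\varepsilon}$ and $A_{x}$ used to invoke Proposition~\ref{refinement of CHP}, and in the final product-rule estimate), the argument carries over verbatim. A standard product-rule computation then gives, in place of the constant $8020012\,\text{Lip}(Q)$, a constant $C'_{N}=O(N\,\text{Lip}(Q))$; and the extension bound $2004\delta/(1-\delta)$ is replaced by $2(N+1)\delta'/(1-\delta')$. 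The upshot is the following $N$-version: for every $\delta'>0$ there exists an open neighborhood $\widetilde{W}_{\delta',N}$ of $X$ in $\widetilde{X}$ such that every $1$-Lipschitz $F:X\to[1,N]$ admits a real-analytic $C'_{N}$-Lipschitz $G:X\to\R$ with $|G-F|\leq 8$ and holomorphic extension $\widetilde{G}:\widetilde{W}_{\delta',N}\to\C$ satisfying $|\widetilde{G}(x+iy)-G(x)|\leq 2(N+1)\delta'/(1-\delta')$.

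Next, given $\eta\in(0,1/2)$, I would pick $N=\lceil 8/\eta\rceil+1$, so that $N-1\geq 8/\eta$. For the given $f:X\to[0,1]$ with $L:=\text{Lip}(f)\in(0,1]$ (the case $L=0$ is trivial), set
\[
F(x):=(N-1)\,f\!\left(\tfrac{x}{(N-1)L}\right)+1,
\]
so that $F:X\to[1,N]$ and $\text{Lip}(F)=1$. Applying the $N$-version of the intermediate estimate yields $G,\widetilde{G}$; I would then define
\[
g(x):=\frac{G((N-1)Lx)-1}{N-1},\qquad \widetilde{g}(z):=\frac{\widetilde{G}((N-1)Lz)-1}{N-1}.
\]
A one-line rescaling calculation yields $|g-f|\leq 8/(N-1)\leq\eta$, $\text{Lip}(g)=L\cdot\text{Lip}(G)\leq C'_{N}\,\text{Lip}(f)$, and $|\widetilde{g}(x+iy)-g(x)|\leq 2(N+1)\delta'/\bigl((N-1)(1-\delta')\bigr)$. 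For the neighborhood: $\widetilde{g}$ is holomorphic on $\{z\in\widetilde{X} : (N-1)Lz\in\widetilde{W}_{\delta',N}\}$, which, since $L\leq 1$, always contains the $f$-independent set $\widetilde{U}_{\delta,\eta}:=\widetilde{W}_{\delta',N}/(N-1)$; this is an open neighborhood of $X$ in $\widetilde{X}$ depending only on $\delta'$ and $N$. Given the target $\delta>0$, I would pick $\delta'$ small enough that $2(N+1)\delta'/\bigl((N-1)(1-\delta')\bigr)\leq\delta$, and set $C_{\eta}:=C'_{N}$; this constant depends only on $X$ and $\eta$, and all three conclusions of the proposition hold.

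The main work lies in the first step, namely confirming that every uniform bound in the intermediate argument (in particular the bounds $M_{x,\varepsilon}$ and $A_{x}$ used when invoking Proposition~\ref{refinement of CHP}) genuinely remains uniform over $\alpha\in A_{N}$ after the substitution $1001\mapsto N$, with the constants scaling predictably in $N$. Once this is verified, the rest is just the scaling calculation outlined above, which crucially uses $L\leq 1$ to keep the relevant neighborhood independent of $f$.
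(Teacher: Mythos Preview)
Your approach is essentially the same as the paper's: the sentence ``Up to scaling and subtracting appropriate constants'' is exactly the step you are spelling out, and the observation that the whole argument works with any integer $N$ in place of $1001$ (with constants scaling predictably) is correct and is implicitly what the paper uses.

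There is, however, a genuine gap in your scaling step. You normalize so that $\text{Lip}(F)=1$ by setting $F(x)=(N-1)f\bigl(x/((N-1)L)\bigr)+1$, and then claim that the domain of $\widetilde g$, namely $\{z:(N-1)Lz\in\widetilde W_{\delta',N}\}$, contains the $f$-independent set $\widetilde W_{\delta',N}/(N-1)$ because $L\le 1$. That containment is equivalent to saying that $\widetilde W_{\delta',N}$ is star-shaped with respect to the origin in $\widetilde X$, i.e.\ that $w\in\widetilde W_{\delta',N}$ implies $Lw\in\widetilde W_{\delta',N}$ for $L\in(0,1]$. But $\widetilde W_{\delta',N}=\bigcup_{x\in X}B_{\widetilde X}(x,s_{x,\varepsilon})$ with radii $s_{x,\varepsilon}$ coming from Proposition~\ref{refinement of CHP}; these radii depend on the bounds $M_{x,\varepsilon}$ and $A_x$, which in turn depend on $n_x$ and $m_{x,1}$ and have no monotonicity in $\|x\|$. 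So the star-shapedness is not established, and your neighborhood is not shown to be independent of $f$.

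The fix is painless: do not divide by $L$ in the domain. Set $F(x)=(N-1)f\bigl(x/(N-1)\bigr)+1$, so that $F:X\to[1,N]$ with $\text{Lip}(F)=\text{Lip}(f)=L\le 1$. The paper's intermediate argument already handles all $\text{Lip}(F)\in(0,1]$ by taking $r=1/\text{Lip}(F)\ge 1$, and Lemmas~\ref{construction of the sup partition of unity} and~\ref{holomorphic extension of the composition of lambda with Phi} are carefully stated so that the neighborhood $\widetilde W_{\delta',N}$ is independent of $r\ge 1$. Then $g(x)=(G((N-1)x)-1)/(N-1)$ gives $|g-f|\le 8/(N-1)\le\eta$, $\text{Lip}(g)\le C'_N\,\text{Lip}(f)$, and $\widetilde g$ is holomorphic on $\widetilde W_{\delta',N}/(N-1)$, which depends only on $\delta'$ and $N$, hence only on $\delta$ and $\eta$.
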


\medskip

Next we are going to see that this result remains true for all
bounded, nonnegative functions $f$ with $\textrm{Lip}(f)\leq 1$ if
we allow $C$ to be slightly larger and we replace $\eta$ by $1$.

For a $1$-Lipschitz, bounded function $f:X\rightarrow [  0,
+\infty)$ we
 define, for $n\in\N$, the functions
\[
f_{n}(x)=
\begin{cases}
f(x)-n+1 & \text{ if }n-1\leq f(x)\leq n,\\
0 & \text{ if }f(x)\leq n-1,\\
1 & \text{ if }n\leq f(x).
\end{cases}
\]
Note that since $f$ is bounded there exists $N\in\N$ such that
$f_n=0$ for all $n\geq N$. The functions $f_{n}$ are clearly
$1$-Lipschitz and take values in the interval $[0,1]$, so, for any
given $\delta, \eta>0$ (to be fixed later on), by what has been
proved immediately above, there exist $C_{\eta}\geq 1$, a
neighborhood $\widetilde{U}_{\delta, \eta}$ of $X$ in
$\widetilde{X}$ and $C_{\eta}$-Lipschitz, real analytic functions
$g_{n}:X\rightarrow\mathbb{R}$, with holomorphic extensions
$\widetilde{g}_n :\widetilde{U}_{\delta, \eta}\to\C$, such that
for all $n\in\mathbb{N}$ we have that $g_{n}$ is
$C_{\eta}$-Lipschitz, $|g_{n}-f_{n}|\leq \eta$, and
$|\widetilde{g}(x+iy)-g(x)|\leq \delta$ for all
$z=x+iy\in\widetilde{U}_{\delta, \eta}$. Since $f_n=0$ for all
$n\geq N$, we may obviously assume that $g_n=0$ for all $n\geq N$,
as well.

Now define a path $\beta:[ 0,+\infty) \rightarrow \ell_{\infty}$
by,
\[
\beta\left(  t\right)  =\left(  1,\cdots,1,\underset{n^{\text{th}%
}\ \text{place}}{\underbrace{t-n+1}},0,0,\cdots
\right)=\sum_{j=1}^{n-1}e_j +(t-n+1)e_n  \text{ if }  n-1\leq
t\leq n.
\]
Clearly the path $\beta$ is a $1$-Lipschitz injection of $[0,
+\infty)$ into $\ell_{\infty}$, with a uniformly continuous (but
not Lipschitz) inverse $\beta^{-1}:\beta([0,+\infty))\to [0,
+\infty)$.

Define a uniformly continuous (not Lipschitz) function $h$ on the
path $\beta$ by $h\left( \beta\left( t\right) \right) =t$ for all
$t\geq 0$, that is $h(y)=\beta^{-1}(y)$ for $y\in\beta([0,
+\infty))$. Then we have $f\left( x\right) =h\left(
\{f_{n}(x)\}_{n=1}^{\infty}\right)$.

\medskip

\subsection{The gluing tube function.}

Now we are going to construct an open tube of radius $2\eta$ (with
respect to the supremum norm $\|\cdot\|_{\infty}$) around the path
$\beta$ in $\ell_{\infty}$, and a real-analytic approximate
extension (with bounded derivative) $H$ of the function $h$
defined on this tube. This construction is meant to be used as
follows: since $|g_n-f_n|\leq \eta$ and
$\{f_{n}(x)\}_{n=1}^{\infty}$ takes values in the path $\beta$,
then $\{g_{n}(x)\}_{n=1}^{\infty}$ will take values in this tube,
and therefore $g(x):=H(\{g_{n}(x)\}_{n=1}^{\infty})$ will
approximate $H(\{f_{n}(x)\}_{n=1}^{\infty})$, which in turn
approximates $h(\{f_{n}(x)\}_{n=1}^{\infty})=f(x)$. Besides, since
$H$ has a bounded derivative on the tube and the functions $g_n$
are $C_{\eta}$-Lipschitz then $g$ will be $C_{\eta}M$-Lipschitz,
where $M$ is an upper bound of $DH$ on the tube.

\medskip

\begin{lemma}\label{tube}
For every $\varepsilon\in (0,1]$ there exist $r, \delta \in (0,
\varepsilon/64)$ and a real-analytic mapping
$G:\ell_{\infty}\to\ell_{\infty}$ with holomorphic extension
$\widetilde{G}:\widetilde{\ell}_{\infty}\to\widetilde{\ell}_{\infty}$
such that:
\begin{enumerate}
\item $G$ diffeomorphically maps the {\em straight tube} $\mathcal{S}$ defined by $\{x\in\widetilde{\ell}_{\infty} :
|x_n|<r \textrm{ for all } n\geq 2, x_1 >-r\} $ onto a {\em
twisted tube} $\mathcal{T}$ around the path $\beta$ in such a way
that
    $$
\{x\in {\ell}_{\infty} : \textrm{dist}(x,
\beta([0,\infty)))<\frac{r}{2}\}\subseteq \mathcal{T}\subseteq
\{x\in {\ell}_{\infty} : \textrm{dist}(x, \beta([0,\infty)))<
2r\}.
    $$
\item We have
    $
\|x-\beta(t)\|_{\infty}\leq r/2 \implies
|e_{1}^{*}G^{-1}(x)-t|\leq\varepsilon.
    $
\item[{(2')}] There is a function $\alpha:\R\to\R$ with a holomorphic extension
$\widetilde{\alpha}:\C\to\C$ such that the function
$H:=\alpha\circ e_{1}^{*}\circ G^{-1}:\mathcal{T}\to\R$ satisfies
$$\|DH(x)\|\leq(1+\varepsilon), \textrm{ and }$$
    $$
\|x-\beta(t)\|_{\infty}<\frac{r}{2} \implies |H(x)-t|\leq
\varepsilon.
    $$
\item The derivatives of the maps $G_{|_{\mathcal{S}}}$ and
$(G_{|_{\mathcal{S}}})^{-1}$ are bounded by $(2+\varepsilon)$ on
the sets $\mathcal{S}$ and $\mathcal{T}$ respectively.
\item $\|\widetilde{G}(x+iy)-G(x)\|_{\infty}\leq \varepsilon$ for all
$x, y\in\mathcal{S}$ with $\|y\|_{\infty}\leq\delta$.
\item A holomorphic extension
$\widetilde{G^{-1}}$ of $G^{-1}$ is defined from
$\widetilde{\mathcal{T}_{\delta}}:=\{u+iv
: u\in\mathcal{T}, v\in\ell_{\infty}, \|v\|_{\infty}<\delta\}$ into
$\widetilde{\ell_{\infty}}$, with the properties that
$$\|\widetilde{G^{-1}}(u+iv)- G^{-1}(u)\|_{\infty}\leq\varepsilon, \textrm{ and }$$
    $$
|\widetilde{H}(u+iv)|=|\widetilde{\alpha}(\widetilde{e_{1}^{*}}(\widetilde{G^{-1}}(u+iv)))|\leq
2\left(H(u)+1\right)
    $$
     for
all $x\in\mathcal{S}, u\in\mathcal{T}$ and $v\in\ell_{\infty}$
such that $u+iv\in\widetilde{\mathcal{T}_{\delta}}$.
\end{enumerate}
\end{lemma}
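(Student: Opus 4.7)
The plan is to construct $G$ by wrapping the straight tube around a real-analytic smoothing of the broken path $\beta$, controlling transverse directions via a frame that rotates with $x_1$.

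First, fix small parameters $\eta, r > 0$ and construct a real-analytic ramp $\rho: \mathbb{R} \to [0,1]$ approximating the piecewise-linear unit ramp, with $|\rho(s) - \max(0, \min(s,1))| \le \eta$, $0 \le \rho'(s) \le 1 + \eta$, and a holomorphic extension $\widetilde{\rho}$ to a complex strip about $\mathbb{R}$ (one can take $\rho$ to be the convolution of the piecewise-linear ramp with a narrow real-analytic density, shifted to restore $\rho(0)\approx 0$). Define the smoothed path $\widetilde{\beta}(t) = (\rho(t-(j-1)))_{j \ge 1} \in c_0 \subset \ell_\infty$; this is real analytic in $t$, admits a holomorphic extension to the same strip, and satisfies $\|\widetilde{\beta}(t) - \beta(t)\|_\infty \le \eta$ for all $t \ge 0$.

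Next, define $G: \ell_\infty \to \ell_\infty$ by
\[
G(x) = \widetilde{\beta}(x_1) + \sum_{j\ge 2} x_j\, v_j(x_1),
\]
where $\{v_j(x_1)\}_{j\ge 2}$ is a real-analytic family of transverse vectors in $\ell_\infty$; a natural choice is
\[
v_j(x_1) = (1 - \rho(x_1 - (j-1)))\,e_j + \rho(x_1 - (j-1))\,e_{j-1},
\]
which for $x_1 \in (n-1,n)$ collectively cover $\{e_i : i \ne n\}$, thereby avoiding the dominant tangent direction $e_n \approx \widetilde{\beta}'(x_1)$. Since the sums are locally finite in each output coordinate, $G$ is real analytic, with a holomorphic extension $\widetilde{G}$ to a strip in $\widetilde{\ell_\infty}$. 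The Jacobian $DG$ is banded (each row has at most three nonzero entries), and a direct computation shows that for $r$ and $\eta$ small enough, $\|DG\| \le 2 + \varepsilon$ and $DG$ is invertible on $\mathcal{S}$ with $\|DG^{-1}\| \le 2 + \varepsilon$. I would then apply Proposition \ref{refinement of CHP} to the complex implicit equation $\widetilde{G}(x) - y = 0$ at each base point to obtain a holomorphic local inverse $\widetilde{G^{-1}}$ on a uniform complex neighborhood $\widetilde{\mathcal{T}_\delta}$ of $\mathcal{T}$, patching the local inverses into a global one via uniqueness of holomorphic solutions. The tube inclusions in property (1) and the estimate in property (2) then follow by direct computation from the identity $G((t,0,0,\ldots)) = \widetilde{\beta}(t)$ together with $\|\widetilde{\beta}(t) - \beta(t)\|_\infty \le \eta$ and the derivative bound on $G^{-1}$.

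Finally, take $\alpha: \mathbb{R} \to \mathbb{R}$ to be a real-analytic approximation of the identity on the relevant range of $e_1^* \circ G^{-1}$, with a holomorphic extension $\widetilde{\alpha}$ matching the bound required in property (6); then $H := \alpha \circ e_1^* \circ G^{-1}$ satisfies property (2'), because $|\alpha'|$ can be tuned so that $\|DH\| \le |\alpha'|\cdot\|e_1^* DG^{-1}\| \le 1 + \varepsilon$, while $|H(x) - t| \le \varepsilon$ follows from $|\alpha(s) - s| \le \varepsilon$ and the estimate in property (2). Properties (5) and (6) follow by pulling back the uniform complex extensions of $\widetilde{G}$, $\widetilde{G^{-1}}$, and $\widetilde{\alpha}$. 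The main obstacle is ensuring that $\|e_1^* DG^{-1}\|$ is close to $1$ uniformly in $x_1$: because the tangent of $\beta$ reorients at each integer, the rotating frame $\{v_j(x_1)\}$ must be calibrated so that the first row of $DG^{-1}$ essentially extracts the active-coordinate component of its argument, without generating a large $\ell_1$-sum of many small terms. A second delicate point is the tension between making $\rho$ a sharp approximation of the unit ramp (needed for property (2')) and keeping $\rho'$ bounded below on the transition region (needed for uniform invertibility of $DG$), which forces a careful joint calibration of the smoothing scale, the tube radius $r$, and the parameter $\delta$ of complex extension.
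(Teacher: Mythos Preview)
Your overall architecture---smooth the path $\beta$, wrap a transverse frame around it, and read off the arclength parameter via $e_1^*\circ G^{-1}$---is reasonable and is in the same spirit as the paper, but your construction of $G$ and especially your choice of $\alpha$ diverge from the paper's in a way that leaves property $(2')$ unproved.

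The paper does \emph{not} write $G$ as a single closed formula. Instead it builds a $C^\infty$ two-dimensional ``corner-turning'' map $\overline{\Phi}:\R^2\to\R^2$ that is an affine isometry on $[-2r,1-\varepsilon]\times[-2r,2r]$ and again an affine isometry (a quarter-turn) on $[1+\varepsilon,2]\times[-2r,2r]$, then lifts it to maps $\overline{\Phi}_{(n+1,n+2)}$ on $\ell_\infty$ acting in coordinates $(x_1,x_{n+2})$, glues these with a partition of unity $\overline{\theta}(x_1-n)$, and finally replaces $\overline{\varphi},\overline{\psi},\overline{\theta}$ by Gaussian convolutions to get the analytic $G$. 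The crucial payoff of this design is that $G$ (hence $G^{-1}$) is $\varepsilon$-close to an affine \emph{isometry} whenever $|x_1-n|\ge 3\varepsilon/2$; the derivative is only as large as $2+\varepsilon$ on the short intervals near the integers. Your direct formula with the rotating frame $v_j(x_1)$ does not obviously have this ``piecewise-isometry'' feature, and you yourself flag the resulting difficulty with $\|e_1^*DG^{-1}\|$ as unresolved.

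The second, decisive gap is your $\alpha$. Taking $\alpha$ to be ``a real-analytic approximation of the identity'' cannot give $\|DH\|\le 1+\varepsilon$ unless $\|e_1^*DG^{-1}\|\le 1+O(\varepsilon)$ everywhere, which you do not establish. The paper's $\alpha$ is \emph{not} an approximation of the identity: it is the Gaussian smoothing of a staircase function $\overline{\alpha}$ that is \emph{constant} on each interval $[n-2\varepsilon,n+2\varepsilon]$ and has slope $1/(1-4\varepsilon)$ in between. This makes $|\alpha'(t)|\le\varepsilon/4$ precisely where $\|DG^{-1}\|$ is near $2+\varepsilon$ (the corner regions), while elsewhere $\|DG^{-1}\|\le 1+\varepsilon$ by the isometry property, so the product $|\alpha'|\cdot\|DG^{-1}\|$ stays below $1+\varepsilon$ globally. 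Your sentence ``$|\alpha'|$ can be tuned'' is exactly this idea, but it is incompatible with $\alpha$ being uniformly $\varepsilon$-close to the identity unless the bad regions have length $O(\varepsilon)$---and arranging that is what forces the paper's two-scale construction ($r\sim\varepsilon/64$, isometry outside an $\varepsilon$-window), which your formula does not provide.

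A minor point: for the holomorphic inverse the paper uses a uniform quantitative inverse function theorem (bounding $DG$, $(DG)^{-1}$, and $D^2\widetilde{G}$ on a strip), not Proposition~\ref{refinement of CHP}; your use of the latter is plausible but you would need to verify its hypotheses uniformly along the infinite tube.
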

\noindent {\em Proof.} Let $r\in (0, 1/8)$, and define
$S=[0,2]\times [-r, r]$, $T=[0,1+r]\times [-r,r]\,\cup\,
[1-r,1+r]\times [-r,1]$. Consider the mapping $F:S\to T$ defined
by
    $$
F(x,y)=
  \begin{cases}
    \left(x-yx, y\right) & \text{ if } 0\leq x\leq 1, -r\leq y\leq r, \\
    \left(1-y, x-1+y(2-x)\right) & \text{ if } 1\leq x\leq 2, -r\leq y\leq
    r.
  \end{cases}
    $$
It is clear that $F$ is a Lipschitz homeomorphism from $S$ onto
$T$, with inverse
    $$
F^{-1}(u,v)=
  \begin{cases}
    \left( \frac{u}{1-v}, v\right) & \text{ if } (u,v)\in [0, 1+r]\times [-r,r], u+v\leq 1,  \\
    \left(\frac{2u+v-1}{u}, 1-u\right) & \text{ if } (u,v)\in [1-r, 1+r]\times [-r,1], u+v\geq 1.
  \end{cases}
    $$
Moreover, from these expressions it is immediately checked that
$F$ is $C^{\infty}$ smooth except on the set $\{x=1\}$, that
$F^{-1}$ is also $C^{\infty}$ smooth except on the set
$\{u+v=1\}$, and that $F, F^{-1}\in W^{1, \infty}$. In fact, using
the norm $\|\cdot\|_{\infty}$ in $\mathbb{R}^{2}$, the derivatives
$DF$ and $DF^{-1}$ satisfy that
    $$
\| DF\|\leq 2+r \, \textrm{ a. e. on } S,  \,\,\, \textrm{ and }
\,\,\,
 \| DF^{-1}\|\leq \frac{2}{(1-r)^2} \, \textrm{ a. e. on
} S.
    $$
This implies that $F:S\to T$ is $(2+r)$-Lipschitz (because $S$ is
convex), and that $F^{-1}$ is locally $2/(1-r)^2$-Lipschitz
(because $T$ is locally convex). Given $\varepsilon\in (0, 1/8)$,
by using convolution with mollifiers and observing that
$\lim_{r\to 0}2/(1-r)^2=2=\lim_{r\to 0}(2+r)$, for a given
$\varepsilon\in (0,1)$, we can find $r=\varepsilon/64$ and
$\overline{F}, \overline{G}\in C^{\infty}(\mathbb{R}^{2},
\mathbb{R}^{2})$ satisfying
    $$
\|\overline{F}_{|_S}-F\|_\infty<\frac{\varepsilon}{10} \textrm{ on
}S, \,\, \|\overline{G}_{|_S}-F\|_\infty<\frac{\varepsilon}{10}
\textrm{ on } T,
    $$
and such that $\overline{F}$ is a diffeomorphism from $T$ onto
$\overline{F}(T)$ with inverse
$\overline{G}_{|_{\overline{F}(T)}}$, and $\overline{F}$ and
$\overline{G}$ are locally $(2+\varepsilon/4)$Lipschitz (with
respect to the norm $\|\cdot\|_{\infty}$). Next, by scaling,
translating and appropriately gluing with the isometries $(x, y)$
and $(1-y, x-1)$ (allowing $r$ to be smaller if necessary), one
can obtain a $C^\infty$ mapping $\overline{\Phi}:\R^2\to\R^2$ such
that
\begin{enumerate}
\item[{(i)}] $\overline{\Phi}$ is the identity on the rectangle $[-2r,1-\varepsilon]\times [-2r,
2r]$;
\item[{(ii)}] $\overline{\Phi}$ maps isometrically the rectangle $[1+\varepsilon, 2]\times [-2r,
2r]$ onto the rectangle $[1-2r, 1+2r]\times [\varepsilon,1]$, and
$D\Phi(s,t)(1,0)=(0,1)$ for all $(s,t)\in [1+\varepsilon, 2]\times
[-2r, 2r]$;
\item[{(iii)}] $\overline{\Phi}$ is a diffeomorphism from $[0,2]\times [-2r,
2r]$ onto $\overline{\Phi}\left( [0,2]\times [-2r, 2r]\right)$;
\item[{(iv)}] considering, by abusing notation, that
$\beta:[0,2]\to\R^2\subset\ell_{\infty}$, we have
\begin{eqnarray*}
& & \{(u,v)\in\R^2 : \textrm{dist}((u,v),
\beta([0,2]))<\frac{r}{2}+\frac{r}{64}\}\subseteq\\
& &\overline{\Phi}([0,2]\times [-r, r])\subseteq \{(s,t)\in\R^2
: \textrm{dist}((s,t),
\beta([0,2]))<r-\frac{r}{64}\}, \\
& & \|\overline{\Phi}(t,0)-\beta(t)\|\leq \frac{\varepsilon}{5}
\textrm{ for all } t\in [0,2];
\end{eqnarray*}
\item[{(v)}] $\overline{\Phi}$ is $(2+\varepsilon/2)$-Lipschitz on
the set $[0,2]\times [-r,r]$, and $\overline{\Phi}^{-1}$ is
$(2+\varepsilon/2)$-locally Lipschitz on the set $\{(u,v)\in\R^2
: \textrm{dist}((u,v),
\beta([0,2]))<r\}$
\item[{(vi)}] $\overline{\Phi}(u,v)=(0,0)$ if $|u|, |v|\geq
10$.
\end{enumerate}
Notice that the local isometry property in $(ii)$ is provided by a
$\pi/2$-rotation followed by a translation, which is an affine
isometry with respect to the norm $\|\cdot\|_{\infty}$ as well as
to the Euclidean norm in $\R^2$. In fact this local isometry is
given by $(x,y)\mapsto (1-y, x-1)$.

Observe that such a mapping $\overline{\Phi}$ also satisfies
\begin{enumerate}
\item[{(vii)}] $|e_{1}^{*}\circ\overline{\Phi}^{-1}(x)-t|\leq 3\varepsilon$ whenever
$\|x-\beta(t)\|_{\infty}\leq r, \, t\in [0,2].$
\end{enumerate}

\medskip

Denote $\overline{\Phi}=(\overline{\varphi}, \overline{\psi})$.
Since the coordinate functions $\overline{\varphi},
\overline{\psi}$ have bounded support, their derivatives (of all
orders) are bounded, and in particular $\Phi$ is a Lipschitz
mapping with a Lipschitz derivative.

Now set
    $$
\overline{\Phi}_{(1,2)}(x)=\overline{\varphi}(x_1, x_{2})e_{1}+
\overline{\psi}(x_1, x_{2})e_{2}+\sum_{k\geq 3}x_k e_k,
    $$
and, for $n\geq 1$, define $\overline{\Phi}_{(n+1,
n+2)}:\ell_{\infty}\to\ell_{\infty}$ by
\begin{eqnarray*}
& & \overline{\Phi}_{(n+1, n+2)}(x)=\\
& & \sum_{j=1}^{n}(1-x_{j+1})e_j + \overline{\varphi}(x_1-n,
x_{n+2})e_{n+1} + \overline{\psi}(x_1-n, x_{n+2})e_{n+2}
+\sum_{k\geq n+3}x_k e_k.
\end{eqnarray*}

One can check that the mappings $\overline{\Phi}_{(n, n+1)}$ are
$C^{\infty}$ smooth on $\ell_{\infty}$, satisfy
$\overline{\Phi}_{(n, n+1)}(x)=\overline{\Phi}_{(n+1, n+2)}(x)$
whenever $n +1/4\leq x_1\leq n+3/4$ and $|x_j|<1/6$ for all $j\geq
2$. Moreover, there exists $N>0$ such that
    $$
\|\overline{\Phi}_{(n, n+1)}(x)\|_{\infty} \leq N \textrm{ for all
} x\in\mathcal{S}, n\in\N
    $$
and
    $$
\|D^{k}\overline{\Phi}_{(n, n+1)}(x)\|\leq N \textrm{ for all }
x\in\ell_{\infty}, n\in\N, k\in\{1,2\}.
    $$

Then define $\overline{G}:\mathcal{S}\subset
\ell_{\infty}\to\ell_{\infty}$ by
    $$
\overline{G}(x)=\sum_{n=0}^{\infty}\overline{\theta}(x_1-n)\overline{\Phi}_{(n+1,
n+2)}(x),
    $$
where $\overline{\theta}:\R\to\R$ is a $C^\infty$ function such
that
\begin{enumerate}
\item[{}] $\overline{\theta}(t)=0$ if and only if $t\in (-\infty, -1/2]\cup [3/4, +\infty)$;
\item[{}] $\overline{\theta}(t)=1$ if and only if $t\in [-1/4, 1/2]$;
\item[{}] $\overline{\theta}'(t)>0$ if and only if $t (-1/2, -1/4)$;
\item[{}] $\overline{\theta}(t)=1-\overline{\theta}(t-1)$ if $t\in (1/2, 3/4)$.
\end{enumerate}
Note in particular that the collection of functions
$t\mapsto\overline{\theta}(t-n)$, $n\in\N$, form a $C^\infty$
partition of unity on $\R$. Observe also that, for every $x\in
\ell_{\infty}$ such that $|x_j|\leq 2r$ for $j\geq 2$, there exist
$n_x\in\mathbb{N}$ and a neighborhood $U_x$ of $x$ so that
$\overline{G}(y)=\overline{\Phi}_{n_x, n_x +1}(y)$ for all $y\in
U_x$.

It is not difficult to check that the mapping
$\overline{G}:\mathcal{S}\subset\ell_{\infty}\to\ell_{\infty}$ has
properties $(1), (2)$ and $(3)$ of the statement (with the
slightly sharper bound $\frac{1}{2}+\frac{1}{8}$, instead of
$\frac{3}{4}$, in $(2)$). In particular $\overline{G} ^{-1}$ is
uniformly continuous and there exists $m>0$ such that
    $$
\|D\overline{G}(x)(h)\|\geq m\|h\| \textrm{ for all } x,
h\in\ell_{\infty}.
    $$
But of course $\overline{G}$ is not real analytic. In order to
obtain a required real-analytic function $G$ we shall substitute
$\overline{\varphi}, \overline{\psi}$ and $\overline{\theta}$ with
real-analytic approximations of these functions defined by
appropriate integral convolutions with Gaussian kernels in $\R^2$
and $\R$. Namely, let us define
\begin{enumerate}
\item[{}] $\varphi(s,t)=a_{\kappa}\int_{\R^2}\overline{\varphi}(u,v)\exp\left(-\kappa \left[
(s-u)^{2}+(t-v)^{2}\right]\right)dudv$,
\item[{}] $\psi(s,t)=a_{\kappa}\int_{\R^2}\overline{\psi}(u,v)\exp\left(-\kappa \left[
(s-u)^{2}+(t-v)^{2}\right]\right)dudv$,
\item[{}] $\theta(t)=b_{\kappa}\int_{-\infty}^{+\infty}\overline{\theta}(u)e^{-\kappa
(t-u)^{2}}du$,
\item[{}] ${\Phi}_{(n+1, n+2)}(x)=
\sum_{j=1}^{n}(1-x_{j+1})e_j + {\varphi}(x_1-n, x_{n+2})e_{n+1}
+\\ +{\psi}(x_1-n, x_{n+2})e_{n+2} +\sum_{k\geq n+3}x_k e_k.$
\end{enumerate}
where
\begin{enumerate}
\item[{}] $a_{\kappa}=1/\int_{\R^2}\exp\left(-\kappa \left[
u^{2}+v^{2}\right]\right)dudv, \textrm{ and }$
\item[{}] $b_{\kappa}=1/\int_{-\infty}^{+\infty}e^{-\kappa v^{2}}dv.$
\end{enumerate}
By taking $\kappa$ large enough we may assume that the functions
$\varphi, \psi, \theta$ and their first and second derivatives are
as close to $\overline{\varphi}, \overline{\psi},
\overline{\theta}$ and their first and second derivatives,
respectively, as we want. Therefore the mappings
$\overline{\Phi}_{(n, n+1)}$ and their first and second
derivatives can be taken as close as needed  to $\Phi_{(n, n+1)}$
and their first and second derivatives, say
    $$
\|D^{(k)}\Phi_{(n, n+1)}(x)-D^{(k)}\overline{\Phi}_{(n,
n+1)}(x)\|_{\infty}\leq\varepsilon/2
    $$
for all $x\in\ell_{\infty}$, $k=0,1,2$. Moreover, it is easily
checked that the holomorphic extensions $\widetilde{\varphi},
\widetilde{\psi}$ of $\varphi, \psi$ to $\C^{2}$, as well as the
holomorphic extension of $\theta$ to $\C$ (defined by the same
formulae by letting $s,t\in\C$) satisfy
\begin{eqnarray*}
& &|\varphi(s+ir,t+iw)|\leq e^{\kappa(r^2+w^2)}|\varphi(s,t)|,\\
& &|\psi(s+ir,t+iw)|\leq e^{\kappa(r^2+w^2)}|\psi(s,t)|, \\
& &|\theta(a+ib)|\leq e^{\kappa b^2}|\theta(a)|
\end{eqnarray*}
and in particular we have
\begin{eqnarray*}
& &|\varphi(s+ir,t+iw)|\leq (1+\varepsilon)|\varphi(s,t)|,\\
& &|\psi(s+ir,t+iw)|\leq (1+\varepsilon)|\psi(s,t)|, \\
& &|\theta(a+ib)|\leq (1+\varepsilon)|\theta(a)|
\end{eqnarray*}
provided that $\max\{|r|^{2}+|w|^{2},
|b|^{2}\}<\frac{\log(1+\varepsilon)}{\kappa}$. It follows that the
corresponding holomorphic extension $\widetilde{\Phi}_{(n, n+1)}$
of $\Phi_{(n, n+1)}$ to $\widetilde{\ell}_{\infty}$ (defined by a
similar formula just replacing $\varphi, \psi$ with
$\widetilde{\varphi}, \widetilde{\psi}$) satisfies
    $$
\|\widetilde{\Phi}_{(n, n+1)}(x+iz)\|_{\infty}\leq
(1+\varepsilon)\|\Phi_{(n, n+1)}(x)\|_{\infty}
    $$
for all $x,z\in\ell_{\infty}$ such that $\|z\|_{\infty}\leq
\sqrt{\frac{\log(1+\varepsilon)}{2\kappa}}$.

Now define $$G(x)=\sum_{n=0}^{\infty}\theta(x_1-n)\Phi_{(n+1,
n+2)}(x).$$ Let us see that $G$ is real analytic and has a
holomorphic extension $\widetilde{G}$ to
$\widetilde{\ell}_{\infty}$ defined by
$$\widetilde{G}(x+iz)=\sum_{n=0}^{\infty}\widetilde{\theta}(x_1-n+iz_1)\widetilde{\Phi}_{(n+1, n+2)}(x+iz).$$
It is enough to check that the series of holomorphic functions
defining $\widetilde{G}$ is locally uniformly absolutely
convergent. Taking into account that the mappings
$\widetilde{\Phi}_{(n, n+1)}$ are clearly uniformly bounded on
bounded sets (meaning that for every $R>0$ there exists $K>0$ such
that $\|\widetilde{\Phi}_{(n, n+1)}(x+iz)\|\leq K$ for all $x,
z\in B(0, R)$ and all $n\in\N$), this amounts to showing that
    $$
\sum_{n=0}^{\infty}|\widetilde{\theta}(t-n+is)|<+\infty
    $$
locally uniformly for $t+is\in\C$. Assume $|t|, |s|<R\leq n$,
$u\in [-1/2, 3/4]$, then we have $-\kappa |t-n|^2+2\kappa
(t-n)u\leq -\kappa (n-R)^{2}+\frac{3}{2}\kappa (n+R)$, hence
\begin{eqnarray*}
& &0\leq \theta(t-n)=\frac{1}{\int_{-\infty}^{\infty}e^{-\kappa
v^{2}}dv} \int_{-\infty}^{\infty}\overline{\theta}(u)e^{-\kappa
(t-n-u)^{2}}du\leq\\
& &\frac{1}{\int_{-\infty}^{\infty}e^{-\kappa v^{2}}dv}
e^{-\kappa\left[(n-R)^{2}-\frac{3}{2}
(n+R)\right]}\int_{-1/2}^{3/4}e^{-\kappa u^{2}}du\leq
e^{-\kappa\left[(n-R)^{2}-\frac{3}{2} (n+R)\right]},
\end{eqnarray*} and consequently
    $$
|\widetilde{\theta}(t-n+is)|\leq \theta(t-n)e^{\kappa s^{2}}\leq
e^{-\kappa\left[(n-R)^{2}-\frac{3}{2} (n+R) -R^{2} \right]}.
    $$
Since
    $$
\sum_{n\geq R}^{\infty}e^{-\kappa\left[(n-R)^{2}-\frac{3}{2} (n+R)
-R^{2}\right]}<+\infty,
    $$
it is then clear that the series $
\sum_{n=0}^{\infty}|\widetilde{\theta}(t-n+is)|<
    $
is bounded on $\{t+is: |t|, |s|<R\}$ by an absolutely convergent
numerical series, and therefore it is locally uniformly
convergent.

\medskip

Now let us show that, given $\varepsilon>0$, $G$ satisfies
$\|G(x)-\overline{G}(x)\|_{\infty}\leq\varepsilon$ for all
$x\in\mathcal{S}$, provided $\kappa>0$ is large enough. Let us
first observe that we can take $\kappa>0$ sufficiently large so
that
    $$
\sum_{n=0}^{\infty}|\theta(t-n)-\overline{\theta}(t-n)|\leq\frac{\varepsilon}{2(N+1)}.
    $$
Indeed, on the one hand, if $|t-n|\geq 2$ we have
$\overline{\theta}(t-n)=0$, and
$|t-n|^{2}-\frac{3}{2}|t-n|=|t-n|\left(|t-n|-\frac{3}{2}\right)\geq\frac{1}{2}|t-n|$,
hence
\begin{eqnarray*}
& &|\theta(t-n)-\overline{\theta}(t-n)|= \theta(t-n)=
\frac{1}{\int_{-\infty}^{\infty}e^{-\kappa v^{2}}dv}
\int_{-\infty}^{\infty}\overline{\theta}(u)e^{-\kappa
(t-n-u)^{2}}du\leq\\
& &\frac{1}{\int_{-\infty}^{\infty}e^{-\kappa v^{2}}dv}
e^{-\kappa\left(|n-t|^{2}-\frac{3}{2}
|t-n|\right)}\int_{-1/2}^{3/4}e^{-\kappa u^{2}}du\leq e^{-\kappa
|t-n|/2},
\end{eqnarray*}
consequently
    $$
\sum_{|t-n|\geq 2}|\theta(t-n)-\overline{\theta}(t-n)|\leq
\sum_{|t-n|\geq 2}e^{-\kappa |t-n|/2}\leq 2\sum_{n=2}^{\infty}
e^{-\kappa n/2},
    $$
and because $2\sum_{n=2}^{\infty} e^{-\kappa n/2}\to 0 \, \textrm{
as } \kappa\to +\infty$, we may assume $\kappa$ is large enough so
that
    $$
\sum_{|t-n|\geq
2}|\theta(t-n)-\overline{\theta}(t-n)|\leq\frac{\varepsilon}{4(N+1)}.
    $$
On the other hand, we may also assume $\kappa>0$ is large enough
so that $|\theta(u)-\overline{\theta}(u)|\leq\varepsilon/16(N+1)$
for all $u\in\R$, and clearly there are at most four integers $n$
with $|t-n|<2$, so we have
    $$
\sum_{|t-n|<2}|\theta(t-n)-\overline{\theta}(t-n)|\leq
4\frac{\varepsilon}{16(N+1)}=\frac{\varepsilon}{4(N+1)}.
    $$
By combining the two last inequalities we get
    $$
\sum_{n=0}^{\infty}|\theta(t-n)-\overline{\theta}(t-n)|\leq
\frac{\varepsilon}{2(N+1)}
    $$
as required.

Now, for every $x\in\mathcal{S}$ we can estimate
\begin{eqnarray*}
& &\|G(x)-\overline{G}(x)\|_{\infty}\leq
\|\sum_{n=0}^{\infty}\left(\theta(x_1-n)-\overline{\theta}(x_1-n)\right)
\Phi_{n+1,
n+2}(x)\|+\\
& &+\|\sum_{n=0}^{\infty}\overline{\theta}(x_1-n)\left(
\Phi_{n+1, n+2}(x)-\overline{\Phi}_{n+1, n+2}(x)\right)\| \\
& &\leq \sum_{n=0}^{\infty}\|\Phi_{n+1, n+2}(x)\| \,
|\theta(x_1-n)-\overline{\theta}(x_1-n)| + \\ & & +
\sum_{n=0}^{\infty}\overline{\theta}(x_1-n)\|\Phi_{n+1,
n+2}(x)-\overline{\Phi}_{n+1, n+2}(x)\| \\ & & \leq
(N+1)\sum_{n=0}^{\infty}|\theta(x_1-n)-\overline{\theta}(x_1-n)|+
\frac{\varepsilon}{2}\sum_{n=0}^{\infty}\overline{\theta}(x_1-n)\leq
\frac{\varepsilon}{2}+\frac{\varepsilon}{2}=\varepsilon.
\end{eqnarray*}

By using the facts that the derivatives $\theta', \theta''$ of
$\theta$ decrease exponentially at infinity and approximate
$\overline{\theta}', \overline{\theta}''$, and by performing
similar calculations, one can also show that
    $$
\|DG(x)-D\overline{G}(x)\|\leq \varepsilon, \,\, \textrm{ and }
\,\,
     \|D^2 G(x)-D^2
\overline{G}(x)\|\leq \varepsilon, \textrm{ for all }
x\in\mathcal{S}
    $$
provided $\kappa>0$ is large enough.

\medskip

Let us now see that $G$ is a diffeomorphism from $\mathcal{S}$
onto $\mathcal{T}:=G(\mathcal{S})$. We know that $\overline{G}$ is
of class $C^\infty$ with bounded derivatives of all orders, in
particular $\overline{G}$ is Lipschitz and has a Lipschitz
derivative. Moreover $D\overline{G}(x)$ is a linear isomorphism
for all $x\in \mathcal{S}$ and the mapping $x\mapsto
\|D\overline{G}(x)^{-1}\|$ is bounded above on $\mathcal{S}$ by a
number $M>0$. This implies that if
$L:\ell_{\infty}\to\ell_{\infty}$ is a linear mapping such that
$\|L\|<1/M$ then $D\overline{G}(x)+L$ is a linear isomorphism as
well, for every $x\in \mathcal{S}$. Since we may assume that
$\|D\overline{G}(x)-DG(x)\|<1/M$ for all $x\in\mathcal{S}$, we
have that $DG(x)$ is a linear isomorphism for all
$x\in\mathcal{S}$

Now we recall that a standard proof of the inverse mapping theorem
(the one that uses the fixed point theorem for contractive
mappings) shows that if a function $F$ (defined on an open set $U$
of a Banach space $X$ and taking values in $X$) has the property
that $DF(x)$ is a linear isomorphism for all $x$, the mappings
$x\mapsto DF(x)$ and $x\mapsto DF(x)^{-1}$ are bounded, and the
mapping $x\mapsto DF(x)$ is Lipschitz, then there exist uniform
lower bounds $r, s>0$ (depending only on $\textrm{Lip}(DF)$ and
the bounds for $DF, (DF)^{-1}$) such that $F$ maps
diffeomorphically each open ball $B(x,r)$ contained in $U$ onto an
open subset of $X$ containing the ball $B(F(x), s)$; see for
instance \cite[Proposition 2.5.6]{AMR}. Since $D\overline{G},
D^{2}\overline{G}$ are bounded, and $DG, D^2 G$ are being assumed
to be close enough to these functions for large $\kappa$, it
follows that $DG, D^2 G$ are uniformly bounded as well for all
$\kappa>0$ large enough, and we can apply the mentioned fact with
$F=G$ to conclude that there exists $r>0$ such that, for all
$\kappa>0$ large enough, $G$ is a diffeomorphism from each open
ball $B(x,r)\subset\mathcal{S}$ onto an open subset of
$\ell_{\infty}$, and in particular $G$ is uniformly locally
injective (at distances less that $r$). The same property is of
course true of $\overline{G}$.

Thus, in order to prove that $G$ maps $\mathcal{S}$
diffeomorphically onto its image it only remains to be seen that
$G$ is globally injective as well (at distances greater than $r$,
at least for $\kappa>0$ large enough). Clearly we have $0<\inf\{
\|\overline{G}(v)-\overline{G}(w)\|_{\infty} : v, w\in\mathcal{S},
\|v-w\|_{\infty}\geq r\}$, so we can take $\varepsilon$ with
$$0<\varepsilon<\frac{1}{4}\inf\{ \|\overline{G}(v)-\overline{G}(w)\|_{\infty} : v, w\in\mathcal{S},
\|v-w\|_{\infty}\geq r\},$$ and of course we may assume $\kappa>0$
is large enough so that
$\|G(x)-\overline{G}(x)\|_{\infty}<\varepsilon$ for all
$x\in\mathcal{S}$. Now take $x, y\in \mathcal{S}$ with $x\neq y$.
If $G(x)=G(y)$ then necessarily $\|x-y\|_{\infty}\geq r$, and we
have
\begin{eqnarray*}
& & \|\overline{G}(x)-\overline{G}(y)\|_{\infty}=\\
& &\|\overline{G}(x)-G(x)+G(y)-\overline{G}(y)\|_{\infty}\leq
\|\overline{G}(x)-G(x)\|_{\infty}+\|G(y)-\overline{G}(y)\|_{\infty}\\
& & \leq 2\varepsilon
< \inf\{ \|\overline{G}(v)-\overline{G}(w)\|_{\infty} : v, w\in\mathcal{S},
\|v-w\|\geq r\},
\end{eqnarray*}
a contradiction.

This proves the first part of $(1)$. The second part of $(1)$
follows easily from the definitions of $\overline{\Phi}$,
$\overline{G}$ and the fact that $G$ approximates $\overline{G}$
(we may assume $0<\varepsilon<1/64$ and $\kappa>0$ large enough so
that $\|G(x)-\overline{G}(x)\|\leq\varepsilon$ for all
$x\in\mathcal{S}$).

Now define, for $t\in\R$, $z\in\C$, the functions
    $$
\overline{\alpha}(t)=
  \begin{cases}
    0 & \text{ if } t\leq 0\\
    n & \text{ if } t\in [n, n+2\varepsilon], \\
    n +\frac{1}{1-4\varepsilon}(t-n-2\varepsilon) & \text{ if } t\in [n+2\varepsilon, n+1-2\varepsilon], \\
    n+1 & \text{ if } t\in [n+1-2\varepsilon, n+1], \, n=0,1, 2,
    ...,
  \end{cases}
    $$
$$
\widetilde{\alpha}(z)=\frac{\int_{\mathbb{R}}\overline{\alpha}(s)e^{-\kappa(z-s)^2}ds}{\int_{\mathbb{R}}e^{-\kappa
s^2}ds},
$$
and put $\alpha=\widetilde{\alpha}_{|_{\R}}$. Observe that the
functions $\overline{\alpha}$ and $\alpha$ are
$\frac{1}{1-4\varepsilon}$-Lipschitz. Also, we have
$|\overline{\alpha}(t)-t|\leq 2\varepsilon$, and hence
$|\alpha(t)- t|\leq 3\varepsilon$  for all $t\geq 0$, if $\kappa$
is large enough. Besides $\overline{\alpha}'(t)=0$ whenever
$|t-n|< 2\varepsilon$, hence we can also assume $\kappa$ is large
enough so that $|\alpha'(t)|\leq \varepsilon/4$ whenever
$|t-n|\leq3\varepsilon/2$.

Set $\overline{H}:=\alpha\circ e_{1}^{*}\circ \overline{G}^{-1}$,
and $H=\alpha\circ e_{1}^{*}\circ {G}^{-1}$. From property $(vii)$
above we get
    $$
|e_{1}^{*}\circ \overline{G}^{-1}(x)-t|\leq 4\varepsilon \textrm{
whenever } \|x-\beta(t)\|\leq r/2,
    $$
which in particular shows $(2)$. Hence we have, for
$\|x-\beta(t)\|_{\infty}\leq r/2$,
    $$
|\overline{H}(x)-t|\leq |\alpha(e_1^{*}(\overline{G}^{-1}(x)))-
e_1^{*}(G^{-1}(x))|+|e_1^{*}(\overline{G}^{-1}(x))-t|\leq
3\varepsilon+4\varepsilon=7\varepsilon,
    $$
By using the fact that $G^{-1}$ approximates $\overline{G}^{-1}$
for $\kappa>0$ large enough\footnote{This is a consequence of the
following exercise: if $f_k$ is a sequence of diffeomorphisms
which uniformly converges to a diffeomorphism $f$ with a uniformly
continuous inverse $f^{-1}$, then $f_{k}^{-1}$ uniformly converges
to $f^{-1}$.}, we also deduce $$ |H(x)-t|=
|\alpha(e_1^{*}(G^{-1}(x)))-t|\leq 8\varepsilon, \textrm{ whenever
} \|x-\beta(t)\|_{\infty}\leq r/2.
    $$
Up to a change of $\varepsilon$ and $r$, this shows the second
part of property $(2')$.

Next, the bounds in $(3)$ can be obtained from similar bounds for
$D\overline{G}, (D\overline{G})^{-1}$, and the facts that these
derivatives are uniformly approximated by $DG$, $(DG)^{-1}$,
respectively, for $\kappa>0$ large enough.\footnote{This is a
consequence of the following (easy to prove) fact: if $f_k$ is a
sequence of diffeomorphisms which uniformly converges to a
diffeomorphism $f$ such that there exists $m>0$ with
$\|Df(x)(h)\|\geq m\|h\|$ for all $x, h$, then the sequence of
derivatives $D(f_{k}^{-1})$ converges to $D(f^{-1})$.} Then we can
also show the first part of $(2')$: if $|e_{1}^{*}G^{-1}(x)-n|\leq
3\varepsilon/2$ for some $n\in\N$ then
    $$
\|DH(x)\|\leq|\alpha'(e_{1}^{*}G^{-1}(x))|\|DG^{-1}(x)\|\leq\frac{\varepsilon}{4}(2+\varepsilon)\leq\varepsilon,
    $$
while, if $|e_{1}^{*}G^{-1}(x)-n|\geq 3\varepsilon/2$ for all
$n\in\N$ then we have that $\overline{G}^{-1}$ is an affine
isometry on a neighborhood of $x$, hence
    $$
\|DG^{-1}(x)\|\leq
\|D\overline{G}^{-1}(x)\|+\varepsilon=1+\varepsilon,
    $$
which implies that $\|DH(x)\|\leq
\frac{1+\varepsilon}{1-4\varepsilon}$. Since $\lim_{\varepsilon\to
0}\frac{1+\varepsilon}{1-4\varepsilon}=1$, the first part of
$(2')$ follows up to a change of $\varepsilon$ and $r$.

\medskip

As for property $(4)$, we shall first check that $D\widetilde{G}$
is bounded on a neighborhood of $\mathcal{S}$ in
$\widetilde{\ell}_{\infty}$ of the form $\{x+iz: x\in \mathcal{S},
z\in\ell_{\infty}, \|z\|\leq 1\}$, so $\widetilde{G}$ is Lipschitz
on this set. Indeed, on the one hand, if $t,s\in\R, |s|\leq 1$, we
have
\begin{eqnarray*}
&
&|(\widetilde{\theta})'(t+is-n)|=\\
& &\left|\frac{1}{\int_{-\infty}^{\infty}e^{-\kappa v^{2}}dv}
\int_{-\infty}^{\infty}\overline{\theta}(u)2(t+is-n-u)e^{-\kappa
(t+is-n-u)^{2}}du\right| \leq\\
& &\frac{1}{\int_{-\infty}^{\infty}e^{-\kappa v^{2}}dv}
2e^{\kappa}\left(|t-n|+\frac{3}{4}+1\right)e^{-\kappa\left(|t-n|^{2}-\frac{3}{2}
|t-n|\right)}\int_{-1/2}^{3/4}e^{-\kappa u^{2}}du\leq\\
&
&2e^{\kappa}\left(|t-n|+2\right)e^{-\kappa\left(|t-n|^{2}-\frac{3}{2}
|t-n|\right)}.
\end{eqnarray*}
For $|t-n|<2$ this expression is bounded by $8e^{4\kappa}$, and
since there are at most four integers $n$ with $|t-n|<2$ we have
    $$
\sum_{|t-n|<2}|(\widetilde{\theta})'(t+is-n)|\leq 32e^{4\kappa}.
    $$
And for $|t-n|\geq 2$ we can estimate
\begin{eqnarray*}
& & \sum_{|t-n|\geq 2}|(\widetilde{\theta})'(t+is-n)|\leq
\sum_{|t-n|\geq
2}2e^{\kappa}\left(|t-n|+2\right)e^{-\kappa\left(|t-n|^{2}-\frac{3}{2}
|t-n|\right)}\leq\\
& & \sum_{|t-n|\geq 2}2e^{\kappa}\left(|t-n|+2\right)e^{-\kappa
|t-n|/2}\leq 2e^{\kappa} \sum_{m=1}^{\infty}(m+2)e^{-\kappa
m/2}<+\infty.
\end{eqnarray*}
Therefore there exists $0<C:=32e^{4\kappa}+2e^{\kappa}
\sum_{m=1}^{\infty}(m+2)e^{-\kappa m/2}<+\infty$ such that
    $$
\sum_{n=0}^{\infty}|(\widetilde{\theta})'(t+is-n)|\leq C
    $$
for all $t,s\in\R$ with $|s|\leq 1$. We also have
    $$
\sum_{n=0}^{\infty}|\widetilde{\theta}(t+is-n)|\leq C' \,\,
\textrm{ for all } \, t,s\in\R, \, |s|\leq 1
    $$
for some $C'>0$.

On the other hand, it is easy to see that there exists $C''>0$
such that
    $$
\|D\widetilde{\Phi}(x+iz)\|\leq C'' \,\, \textrm{ and } \, \,
\|\widetilde{\Phi}(x+iz)\|\leq C''
    $$
for all $x\in\mathcal{S}$, $z\in\ell_{\infty}$, $\|z\|\leq 1$.
Therefore we get
\begin{eqnarray*}
& & \|D\widetilde{G}(x+iz)\|\leq
\sum_{n=0}^{\infty}|(\widetilde{\theta})'(x_1+iz_1-n)| \,
\|\widetilde{\Phi}_{(n+1, n+2)}(x+iz)\|+\\
& &+\sum_{n=0}^{\infty}|\widetilde{\theta}(x_1+iz_1-n)| \,
\|D\widetilde{\Phi}_{(n+1, n+2)}(x+iz)\|\leq\\
& & \sum_{n=0}^{\infty}|(\widetilde{\theta})'(x_1+iz_1-n)| C''+
\sum_{n=0}^{\infty}|\widetilde{\theta}(x_1+iz_1-n)|C''\leq
C''(C+C')
\end{eqnarray*}
for all $x\in\mathcal{S}, z\in\ell_{\infty}, \|z\|\leq 1$.

Since $D\widetilde{G}$ is bounded on the convex set $\{x+iz: x\in
\mathcal{S}, z\in\ell_{\infty}, \|z\|\leq 1\}$, it immediately
follows from the mean vale theorem that for every $\varepsilon>0$
there exists $\delta>0$ such that if $x\in\mathcal{S},
z\in\ell_{\infty}$ and $\|z\|\leq\delta$ then
$\|\widetilde{G}(x+iz)-G(x)\|\leq\varepsilon$. This shows $(4)$.

\medskip

Similar calculations show that $D^{2}\widetilde{G}$ is also
bounded on $\{x+iz: x\in \mathcal{S}, z\in\ell_{\infty}, \|z\|\leq
1\}$. And we already know that $DG$ and $(DG)^{-1}$ are bounded on
$\mathcal{S}$. Then, according to Proposition 2.5.6 of \cite{AMR}
(or rather, by an analogous statement for holomorphic mappings,
which can be proved in the same fashion by refining a standard
proof of the inverse mapping theorem for holomorphic mappings),
there exist $r', s'>0$ (depending only on the bounds for $DG$ and
$(DG)^{-1}$ on $\mathcal{S}$ and on the bound of
$D^{2}\widetilde{G}$ on $\{x+iz: x\in \mathcal{S},
z\in\ell_{\infty}, \|z\|\leq 1\}$) such that, for every
$x\in\mathcal{S}$, the mapping $\widetilde{G}$ is a holomorphic
diffeomorphism from the ball $B_{\widetilde{\ell_{\infty}}}(x,
r')$ onto an open subset of $\widetilde{\ell_{\infty}}$ which
contains the ball $B_{\widetilde{\ell_{\infty}}}(G(x), s')$. In
particular, for every $y=G(x)\in\mathcal{T}$ there exists a
holomorphic extension $(\widetilde{G^{-1}})_{y}$ of $G^{-1}$
defined on the ball $B_{\widetilde{\ell_{\infty}}}(G(x), s')$ and
which maps this ball diffeomorphically within the ball
$B_{\widetilde{\ell_{\infty}}}(x, r')$.

Now define
$\widetilde{G^{-1}}(u+iv)=(\widetilde{G^{-1}})_{y}(u+iv)$ if
$u+iv\in B_{\widetilde{\ell_{\infty}}}(y, s')$ for some
$y\in\mathcal{T}$. This mapping is well defined. {\bf For}, if
$w=u+iv\in B_{\widetilde{\ell_{\infty}}}(y_1, s')\cap
B_{\widetilde{\ell_{\infty}}}(y_2, s')$ with $y_1,
y_2\in\mathcal{T}, y_1\neq y_2$, then there exist
$y_3=G(x_3)\in\mathcal{T}$ and $t\in (0, s')$ such that
$B_{\widetilde{\ell_{\infty}}}(y_3, t)\subset
B_{\widetilde{\ell_{\infty}}}(y_1, s')\cap
B_{\widetilde{\ell_{\infty}}}(y_2, s')$, and since $\widetilde{G}$
maps diffeomorphically the ball
$B_{\widetilde{\ell_{\infty}}}(x_3, r')$ onto an open set
containing $B_{\widetilde{\ell_{\infty}}}(y_3, t)$, and the
mappings $(\widetilde{G^{-1}})_{y_1}, (\widetilde{G^{-1}})_{y_2},
(\widetilde{G^{-1}})_{y_3}$ are local inverses of $\widetilde{G}$
defined on $B_{\widetilde{\ell_{\infty}}}(y_3, t)$ and taking
values in $B_{\widetilde{\ell_{\infty}}}(x_3, r')$, we necessarily
have $(\widetilde{G^{-1}})_{y_1}=(\widetilde{G^{-1}})_{y_2}=
(\widetilde{G^{-1}})_{y_3}$ on the open ball
$B_{\widetilde{\ell_{\infty}}}(y_3, t)$, by uniqueness of the
inverse. But then, by the identity theorem for holomorphic
mappings, we must have
$(\widetilde{G^{-1}})_{y_1}=(\widetilde{G^{-1}})_{y_2}$ on the
open connected set $B_{\widetilde{\ell_{\infty}}}(y_1, s')\cap
B_{\widetilde{\ell_{\infty}}}(y_2, s')$, and in particular
$(\widetilde{G^{-1}})_{y_1}(w)=(\widetilde{G^{-1}})_{y_2}(w)$.
Therefore $\widetilde{G^{-1}}$ is a holomorphic extension of
$G^{-1}$ to the open neighborhood
$\widetilde{\mathcal{T}_{s'}}:=\{u+iv
: y\in\mathcal{T}, v\in\ell_{\infty}, \|v\|< s'\}$ of
$\mathcal{T}$ which maps $\widetilde{\mathcal{T}_{s}}$ into
$\widetilde{\mathcal{S}_r'}:=\{x+iz : x\in\mathcal{S},
z\in\ell_{\infty}, \|z\|<r'\}$. Hence
    $$
\|\widetilde{G^{-1}}(u+iv)-G^{-1}(u)\|<r', \eqno(*)
    $$
for all $u\in\mathcal{T}$, $v\in\ell_{\infty}$ with $\|v\|<s'$.
Obviously we can assume $r'<\varepsilon$, so the first part of
$(5)$ is proved. Finally, in order to check the second part of
$(5)$, observe that, for all $t, \nu\in\R$, we have
    $$
|\widetilde{\alpha}(t+i\nu)|\leq e^{\kappa \nu^{2}}\alpha(t).
    $$
Therefore, for all $t+s+i\nu\in\C$, with $|s|\leq
1/\textrm{Lip}(\alpha)$ we have
    $$
|\widetilde{\alpha}(t+s+i\nu)|\leq e^{\kappa \nu^{2}}\alpha(t+s)
\leq e^{\kappa \nu^{2}}\left(
\alpha(t)+\textrm{Lip}(\alpha)|s|\right)\leq e^{\kappa
\nu^{2}}\left(\alpha(t)+1\right).
    $$
We can also assume $0<r'\leq\min\{\frac{1}{\textrm{Lip}(\alpha)},
\sqrt{\frac{\log 2}{\kappa}}\}$, so we get
    $$
|\widetilde{\alpha}(t+z)|\leq 2(\alpha(t)+1) \, \textrm{ if } \,
t\in\R, \, z\in\C, \, |z|\leq r'.
    $$
By combining this estimation with $(*)$ we obtain
$|\widetilde{\alpha}(\widetilde{e_{1}^{*}}(\widetilde{G^{-1}}(u+iv)))|\leq
2\left(\alpha(e_{1}^{*}(G^{-1}(u)))+1\right)$ for all
$u\in\mathcal{T}$, $v\in\ell_{\infty}$ with $\|v\|<s'$, which
establishes the second part of $(5)$. \hspace{8.4cm} $\boxtimes$

\bigskip

\subsection{Proof of the main result in the case of a
$1$-Lipschitz, bounded function.}

Let us continue with the proof of our main result in the case of a
nonnegative, bounded, $1$-Lipschitz function. Fixing
$\varepsilon=1$, there exist $\delta_1, r_1>0$ so that properties
$(4)$ and $(5)$ of the preceding Lemma hold. Now, given a bounded
$1$-Lipschitz function $f:X\to\R$, we may assume (up to the
addition of a constant) that $f$ takes values in the interval $[0,
+\infty)$. Let $N\in\N$ be such that $N\geq f(x)\geq 0$ for all
$x\in X$. By Proposition \ref{intermediate result}, applied with
$\eta=r_1/2$ and this $\delta_1$, we can find real analytic
functions $g_1, ..., g_N$, with holomorphic extensions
$\widetilde{g}_1, ... \widetilde{g}_N$ defined on
$\widetilde{U}:=\widetilde{U}_{\delta_1, r_{1}/2}\supset X$, such
that
\begin{enumerate}
\item $|f_i(x)-g_i(x)|\leq r_{1}/2$ for all $x\in X$.
\item $g_i$ is $C_{r_1/2}$-Lipschitz.
\item $|\widetilde{g_i}(x+iy)-g_i(x)|\leq \delta_1$ for all $z=x+iy\in\widetilde{U}$.
\end{enumerate}

Now let us define $g$ by $g=\alpha\circ e_{1}^{*}\circ G^{-1}
\circ\{g_i\}_{i=1}^{\infty}=H\circ\{g_i\}_{i=1}^{\infty}$, where
we understand $g_i=0$ for all $i>N$.

Note that, in spite of $G^{-1}$ having a bounded derivative,
because $\mathcal{T}$ is not convex we cannot deduce that $G^{-1}$
is Lipschitz. As a matter of fact this mapping is not Lipschitz,
though it is uniformly continuous. Nevertheless, since $X$ is
convex and the derivative of the function $H$ is bounded by
$1+\varepsilon=2$, we do have that $g$ is Lipschitz. And since the
mapping
    $$
X\ni x\mapsto \left( g_1(x), ..., g_N(x), 0, 0,
...\right)\in\ell_{\infty}
    $$
is obviously $C_{r_1/2}$-Lipschitz, it follows that
$g=H\circ\{g_i\}_{i=1}^{\infty}$ is $2C_{r_1/2}$-Lipschitz.

Because $\left(  f_{1}\left(  x\right)  ,\cdots, f_{N}\left(
x\right), 0, 0, \cdots \right)  \in\beta\left(  \left[  0,N\right]
\right) ,$ and also $\left\vert f_{i}-g_{i}\right\vert \leq
r_{1}/2,$ we have $\left( g_{1}\left(  x\right) ,\cdots,
g_{N}\left( x\right), 0, 0, \cdots \right)  \in \mathcal{T}$ and,
by property $(2)$ of the preceding Lemma,
    $$
|g(x)-f(x)|=|H(\{g_n(x)\}_{n=1}^{\infty})-h(\{f_n(x)\}_{n=1}^{\infty})|\leq
\varepsilon=1.
    $$
The function $g$ is clearly real analytic, with holomorphic
extension $\widetilde{g}(z)=(\widetilde{\alpha}\circ
\widetilde{e}_{1}^{*}\circ
\widetilde{G^{-1}})(\widetilde{g}_{1}(z), \widetilde{g}_{2}(z),
..., \widetilde{g}_{N}(z), 0, 0, ...)$ defined on $\widetilde{U}$.
And, because $|\widetilde{g_i}(x+iy)-g_i(x)|\leq \delta$ for all
$z=x+iy\in\widetilde{U}$, we have, using property $(5)$ of the
preceding Lemma, that $|\widetilde{g}(x+iy)|\leq 2(|g(x)|+1)$ for
all $x+iy\in\widetilde{U}$.

By resetting $C_{r_1/2}$ to $C:=2C_{r_1/2}$, we have thus proved
the following version of our main result for bounded functions.

\begin{proposition}
Let $X$ be a Banach space having a separating polynomial. Then
there exist $C\geq 1$ and an open neighborhood $\widetilde{U}$ of
$X$ in $\widetilde{X}$ such that, for every $1$-Lipschitz, bounded
function $f:X\to\R$, there exists a real analytic function
$g:X\to\mathbb{R}$, with holomorphic extension $\widetilde
{g}:\widetilde{U}\to\mathbb{C}$, such that
\begin{enumerate}
\item $|f(x)-g(x)|\leq 1$ for all $x\in X$.
\item $g$ is $C$-Lipschitz.
\item $|\widetilde{g}(x+iy)|\leq 2(|g(x)|+ 1)$ for all $z=x+iy\in\widetilde{U}$.
\end{enumerate}
\end{proposition}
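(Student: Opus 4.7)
The plan is to reduce the bounded case to the $[0,1]$-valued case already handled by Proposition \ref{intermediate result}, by slicing $f$ into a sequence of unit-interval-valued 1-Lipschitz pieces and then reassembling their real analytic approximations via the gluing tube function of Lemma \ref{tube}. First, by translating we may assume $f\geq 0$, and since $f$ is bounded we pick $N\in\N$ with $0\leq f\leq N$ and define the slices
\[
f_n(x)=\max\bigl\{0,\min\{1,f(x)-n+1\}\bigr\},
\]
so each $f_n:X\to[0,1]$ is $1$-Lipschitz and $f_n\equiv 0$ for $n>N$. The identity $f(x)=h(\{f_n(x)\}_n)$ holds with $h=\beta^{-1}$, where $\beta:[0,+\infty)\to\ell_\infty$ is the isometric path introduced above, and $(f_n(x))_n$ always lies on the image of $\beta$.

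Next, I would first fix $\varepsilon=1$ in Lemma \ref{tube} to obtain the tube radius $r_1$, complex thickness $\delta_1$, twisted tube $\mathcal{T}$, and the real analytic map $H=\alpha\circ e_1^*\circ G^{-1}$ with holomorphic extension $\widetilde{H}$ on $\widetilde{\mathcal{T}_{\delta_1}}$ satisfying $\|DH\|\leq 2$, the approximation inequality $|H(x)-t|\leq 1$ whenever $\|x-\beta(t)\|_\infty\leq r_1/2$, and the bound $|\widetilde{H}(u+iv)|\leq 2(H(u)+1)$ on $\widetilde{\mathcal{T}_{\delta_1}}$. With $r_1,\delta_1$ thus fixed \emph{before} invoking Proposition \ref{intermediate result}, I would then apply that proposition with parameters $\eta=r_1/2$ and $\delta=\delta_1$ to each $f_n$, obtaining a common constant $C_{r_1/2}$, a common complex neighborhood $\widetilde{U}:=\widetilde{U}_{\delta_1,r_1/2}$, and real analytic $g_n:X\to\R$ with holomorphic extensions $\widetilde g_n:\widetilde U\to\C$ such that $|g_n-f_n|\leq r_1/2$, $\mathrm{Lip}(g_n)\leq C_{r_1/2}$, and $|\widetilde g_n(x+iy)-g_n(x)|\leq\delta_1$; set $g_n\equiv 0$ for $n>N$. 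Define
\[
g(x)=H\bigl(g_1(x),\ldots,g_N(x),0,0,\ldots\bigr),\qquad \widetilde g(z)=\widetilde H\bigl(\widetilde g_1(z),\ldots,\widetilde g_N(z),0,0,\ldots\bigr).
\]

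The three properties will then be read off mechanically. Since $(f_n(x))_n\in\beta([0,\infty))$ and $\|(g_n(x))_n-(f_n(x))_n\|_\infty\leq r_1/2$, the point $(g_n(x))_n$ lies within $r_1/2$ of $\beta$, hence in $\mathcal T$ by Lemma \ref{tube}(1); property (2) of that lemma then gives $|g(x)-f(x)|\leq 1$. For property (2) of the proposition, the map $x\mapsto(g_n(x))_n$ is $C_{r_1/2}$-Lipschitz into $\ell_\infty$ (only finitely many coordinates vary, each $C_{r_1/2}$-Lipschitz), and since $X$ is convex and $\|DH\|\leq 2$ on $\mathcal T$, the composition $g$ is $2C_{r_1/2}$-Lipschitz, so we set $C:=2C_{r_1/2}$. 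For property (3), $|\widetilde g_n(x+iy)-g_n(x)|\leq\delta_1$ places $(\widetilde g_n(x+iy))_n$ in $\widetilde{\mathcal T_{\delta_1}}$, whence Lemma \ref{tube}(5) yields $|\widetilde g(x+iy)|\leq 2(|g(x)|+1)$.

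The main obstacle, and the reason the argument is arranged in this order, is the \emph{uniformity} in $f$ of both $C$ and $\widetilde U$: neither may depend on the bound $N$. This is ensured by the fact that the tube parameters $r_1,\delta_1$ are extracted from Lemma \ref{tube} with the fixed absolute choice $\varepsilon=1$, and the intermediate Proposition \ref{intermediate result} is then invoked with parameters $\eta,\delta$ that depend only on $r_1,\delta_1$, not on $N$ or on $f$. The slicing produces functions $f_n$ that are all $1$-Lipschitz with range in $[0,1]$ regardless of $N$, so the constants $C_{r_1/2}$ and the neighborhood $\widetilde U_{\delta_1,r_1/2}$ supplied by the intermediate result are themselves independent of $f$, which propagates through to the final $C$ and $\widetilde U$.
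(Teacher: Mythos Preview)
Your proposal is correct and follows essentially the same route as the paper: slice $f$ into $[0,1]$-valued $1$-Lipschitz pieces $f_n$, fix $\varepsilon=1$ in Lemma~\ref{tube} to obtain absolute tube parameters $r_1,\delta_1$, invoke Proposition~\ref{intermediate result} with $\eta=r_1/2$ and $\delta=\delta_1$ to get the $g_n$, and set $g=H\circ\{g_n\}$; the three properties follow exactly as you describe, and your discussion of why $C$ and $\widetilde U$ are independent of $N$ matches the paper's reasoning. The only cosmetic slip is that the approximation $|H(x)-t|\leq 1$ is item~(2$'$) of Lemma~\ref{tube}, not~(2), though the paper itself makes the same abbreviation.
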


\bigskip

\subsection{Proof of the main result in the case of an unbounded Lipschitz function.}

Let $\varepsilon\in (0,1)$ be fixed, and let $f:X\to\R$ be an
unbounded $1$-Lipschitz function. For $n\in\N, n\geq 2$, let us
define the {\em crowns}
    $$
C_n=C_n^{\varepsilon} := \{x\in X \, : \,
\frac{2^{n-1}}{\varepsilon}\leq Q(x) \leq
\frac{2^{n+1}}{\varepsilon}\},
    $$
and for $n=1$ set
    $
C_1=C_1^{\varepsilon}=\{ x\in X \, : \, Q(x)\leq 4/\varepsilon\}.
    $
Let $f_n$ denote a bounded, $1$-Lipschitz extension of
$f_{|_{C_n}}$ to $X$ (defined for instance by $x\mapsto\max\{
-\|f_{|_{C_n}}\|_{\infty}, \min\{ \|f_{|_{C_n}}\|_{\infty}, \,
\inf_{y\in C_n}\{ f(y)+\|x-y\|\}\, \}\, \}$). According to the
preceding Proposition there exist an open neighborhood
$\widetilde{U}=\widetilde{U}_{\varepsilon}$ of $X$ in
$\widetilde{X}$, and real analytic functions $g_n :X\to \R$, with
holomorphic extensions $\widetilde{g_n}: \widetilde{U}\to \C$,
such that
\begin{enumerate}
\item $|f_n(x)-g_n(x)|\leq 1$ for all $x\in X$.
\item $g_n$ is $C$-Lipschitz.
\item $|\widetilde{g_n}(x+iy)|\leq 2(|g_n(x)|+ 1)$ for all $z=x+iy\in\widetilde{U}$.
\end{enumerate}

\medskip

For $n=1$, let $\overline{\theta}_{1}:\R\to [0,1]$ be a $C^\infty$
function such that
\begin{enumerate}
\item $\overline{\theta}_{1}(t)=1 \, \iff \, t\in [0, 2/\varepsilon]$;
\item $\overline{\theta}_{1}(t)>0 \iff t\in (-1/\varepsilon, 4/\varepsilon)$;
\item $\overline{\theta}_{1}'(t)<0 \iff t\in (2/\varepsilon, 4/\varepsilon)$;
\item $\textrm{Lip}(\overline{\theta}_{1})\leq 1$.
\end{enumerate}
Now let $\overline{\theta}_n:\R\to [0,1]$, $n\in\N$, $n\geq 2$, be
a sequence of $C^\infty$ functions with the following properties:
\begin{enumerate}
\item $\overline{\theta}_{n}(t)>0 \, \iff \, t\in (2^{n-1}/\varepsilon,
2^{n+1}/\varepsilon)$;
\item $\overline{\theta}_{n}'(t)>0 \iff t\in (2^{n-1}/\varepsilon, 2^{n}/\varepsilon)$;
\item $\overline{\theta}_{n}'(t)<0 \iff t\in (2^n/\varepsilon, 2^{n+1}/\varepsilon)$;
\item  $\overline{\theta}_{n}(2^n/\varepsilon)=1$;
\item $\overline{\theta}_{n}(t)=1-\overline{\theta}_{n-1}(t)$ whenever $t\in (2^{n-1}/\varepsilon,
2^{n}/\varepsilon)$;
\item $\textrm{Lip}(\overline{\theta}_{n})\leq \varepsilon/2^{n-2}$.
\end{enumerate}
The functions $\overline{\theta}_n$ form a partition of unity on
$\R$, and
    $$
\sum_{n=1}^{\infty}\textrm{Lip}(\overline{\theta}_n)\leq
3\varepsilon.
    $$
Then the functions $x\mapsto \overline{\theta}_{n}(Q(x))$,
$n\in\N$ also form a partition of unity subordinated to the
covering $\bigcup_{n\in \N}C_n =X$, and the sum of the Lipschitz
constants of these functions is bounded by
$3\varepsilon\textrm{Lip}(Q)$.

Define real analytic functions $\theta_n :\R\to [0,1]$ by
    $$
\theta(t)=a_n \int_{\R}\overline{\theta}_{n}(s)e^{-\kappa_n
(t-s)^{2}}ds,
    $$
where
    $
a_n := \int_{\R}e^{-\kappa_n s^{2}}ds,
    $
and $\kappa_n$ is large enough so that
\begin{enumerate}
\item $e^{-\kappa_n} \leq \varepsilon/2^{n+3}(1+\|g_n\|_{\infty})$;
\item $|\theta_{n}(t)- \overline{\theta}_{n}(t)|\leq \varepsilon/2^{n+3}(1+\|g_n\|_{\infty})$ for all $t\in\R$;
\item $|\theta_{n}'(t)-\overline{\theta}_{n}'(t)|\leq
\varepsilon/2^{n+3}\textrm{Lip}(Q)(1+\|g_n\|_{\infty})$ for all
$t\in\R$.
\end{enumerate}
Also,
    $
\textrm{Lip}(\theta_n)=\textrm{Lip}(\overline{\theta}_n),
    $
and hence $\sum_{n=1}^{\infty}\textrm{Lip}(\theta_n)\leq
3\varepsilon$. Let us define $g:X\to\R$ and
$\widetilde{g}:\widetilde{U}\to\C$ by
    $$
g(x)=\sum_{n=1}^{\infty}\theta_n (Q(x)) g_n (x), \,\, \textrm{ and
} \,\, \widetilde{g}(z)=\sum_{n=1}^{\infty}\widetilde{\theta}_n
(\widetilde{Q}(z)) \widetilde{g}_n (z),
    $$
where
    $
\widetilde{\theta}(u+iv)= a_n
\int_{\R}\overline{\theta}_{n}(s)e^{-\kappa_n (u+iv-s)^{2}}ds.
    $

In order to see that $g$ is well defined and real-analytic, with
holomorphic extension $\widetilde{g}$ defined on $\widetilde{U}$,
it is enough to show that the series of holomorphic mappings
defining $\widetilde{g}$ is locally uniformly and absolutely
convergent on $\widetilde{U}$. And, since
$|\widetilde{g_n}(z)|\leq 2(\|g_n\|_{\infty} +1)$ for all
$z\in\widetilde{U}$, it is sufficient to check that
    $$
\sum_{n=1}^{\infty}|\widetilde{\theta}_{n}(\widetilde{Q}(z))|\,
(1+\|g_{n}\|_{\infty})<+\infty
    $$
locally uniformly on $\widetilde{U}$.

We may assume that $\textrm{Lip}(\widetilde{Q})\leq C$. Then, for
each $x\in X$, according to Lemma \ref{separating function Q
lemma}, we can write
    $$
\widetilde{Q}(x+z)=Q(x)+Z_x, \textrm{ where } Z_x \in\C, \,
|Z_x|\leq C\|z\|_{\widetilde{X}}.
    $$
Fix $x\in X$. There exists $n_x \in\N$ so that $x\in C_{n_x}$ and
in particular $Q(x)\leq 2^{n_x +1}/\varepsilon$.

Now, if $n\geq n_x +3$ and $s\in
\textrm{support}(\overline{\theta}_n)=[2^{n-1}/\varepsilon,
2^{n+1}/\varepsilon]$ we have $s\geq 2^{n_x +2}/\varepsilon$ and
therefore, for $\|z\|_{\widetilde{X}}\leq 1/2C$,
\begin{align*}
&  \operatorname{Re}\left(  \widetilde{Q}\left(  x+z\right)
-s\right)  ^{2}\\
& \\
&  =\left(  Q\left(  x \right)  -s \right)  ^{2}+2\left( Q\left(
x \right)  -s \right)  \operatorname{Re}Z_{x}+\operatorname{Re}%
(Z_{x}^{2})\\
& \\
&=\left(Q(x)-s+ \operatorname{Re}Z_{x} \right)^{2}- \left(
\operatorname{Re}Z_{x} \right)^{2}+\operatorname{Re}(Z_{x}^{2})\\
&\geq \left(Q(x)-s+ \operatorname{Re}Z_{x} \right)^{2}-\frac{1}{2}
\\
& \geq \frac{1}{2}(s-Q(x)-
\operatorname{Re}Z_{x})^{2}+\frac{1}{2}(\frac{2^{n_x
+2}}{\varepsilon}- \frac{2^{n_x +1}}{\varepsilon}
-\frac{1}{2})^{2}-\frac{1}{2}\\
& \geq \frac{1}{2}(s-Q(x)- \operatorname{Re}Z_{x})^{2} + 1.
\end{align*}
Hence, for  $n\geq n_x +3$ and $\|z\|_{\widetilde{X}}\leq 1/2C$,
we have, according to the choice of $\kappa_n$, that
\begin{eqnarray*}
& &|\widetilde{\theta}_{n}(\widetilde{Q}(x+z))| \leq a_n
\int_{\R}\overline{\theta}_{n}(s)e^{-\kappa_n
\operatorname{Re}(\widetilde{Q}(x+z)-s)^{2}}ds\\
& &\leq a_n \int_{\R} e^{-\kappa_n \left( \frac{1}{2}(s-Q(x)-
\operatorname{Re}Z_{x})^{2} + 1 \right)}ds\\
& & = e^{-\kappa_n} a_{n} \int_{\R} e^{-\kappa_n
\frac{1}{2}(s-Q(x)- \operatorname{Re}Z_{x})^{2}}ds= e^{-\kappa_n}
a_{n} \int_{\R}
e^{-\kappa_n u^{2}} \sqrt{2}du\\
& &=\sqrt{2}e^{-\kappa_n}\leq
\sqrt{2}\frac{1}{2^{n}(1+\|g_n\|_{\infty})}.
\end{eqnarray*}
Then it is clear that the series
    $$
\sum_{n=1}^{\infty}|\widetilde{\theta}_{n}(\widetilde{Q}(x+z))|\,
(1+\|g_{n}\|_{\infty})
    $$
is uniformly convergent for $\|z\|_{\widetilde{X}}\leq 1/2C$.
Since we can obviously assume $\widetilde{U}\subset \{ x+iy : x\in
X, \|y\|\leq 1/2C\}$ (and in fact this is always the case, see the
proof of Lemma \ref{construction of the sup partition of unity}),
this argument shows that the series
    $$
\sum_{n=1}^{\infty}\widetilde{\theta}_n (\widetilde{Q}(z))
\widetilde{g}_n (z)
    $$
defines a holomorphic function $\widetilde{g}$ on $\widetilde{U}$.

\medskip

Next let us show that the real analytic function $g$ approximates
$f$ on $X$. Let us define an auxiliary $C^\infty$ function
$\overline{g}$ by
    $$
\overline{g}(x)=\sum_{n=1}^{\infty}\overline{\theta}_{n}(Q(x))g_n(x).
    $$
Since the functions $\overline{\theta}_n (Q(x))$ form a partition
of unity subordinated to the covering $\bigcup_{n\in \N}C_n=X$,
$|g_n -f_n|\leq 1$, and $f_n=f$ on $C_n$, it is immediately
checked that
    $$
|\overline{g}(x)-f(x)|\leq 1 \textrm{ for all } x\in X.
    $$
On the other hand we have
\begin{eqnarray*}
& & |g(x)-\overline{g}(x)|=|\sum_{n=1}^{\infty}(\theta_n (Q(x))
-\overline{\theta}_n (Q(x)))g_n (x)|\\
& &\leq \sum_{n=1}^{\infty}\|g_{n}\|_{\infty} |\theta_n(Q(x))-
\overline{\theta}_n (Q(x))|
\\
& &\leq\sum_{n=1}^{\infty}\frac{1}{2^n}=1.
\end{eqnarray*}
By combining the last two estimations we get
    $$
|g(x)-f(x)|\leq 2 \textrm{ for all } x\in X.
    $$

In order to estimate $\textrm{Lip}(g)$, observe first that,
because support$(\overline{\theta}_n)=C_n$ and $f_n = f$ on $C_n$,
we can write, for every $x, y\in X$,
    $$
f(x)=\sum_{n=1}^{\infty}f(x)\overline{\theta}_n
(Q(y))=\sum_{n=1}^{\infty}f_n(x)\overline{\theta}_n (Q(y)).
    $$
On the one hand, for every $x, y\in X$,
\begin{eqnarray*}
& &
\overline{g}(x)-\overline{g}(y)=\sum_{n=1}^{\infty}\overline{\theta}_n
(Q(x))g_n (x) -\sum_{n=1}^{\infty}\overline{\theta}_n (Q(y))g_n
(y)=\\
& &\sum_{n=1}^{\infty}\left( g_n(x)- f_n(x)\right)
\left(\overline{\theta}_n (Q(x))- \overline{\theta}_n
(Q(y))\right) +
\sum_{n=1}^{\infty}(g_n(x)-g_n(y))\overline{\theta}_n (Q(y))\\
& &\leq \sum_{n=1}^{\infty}
\textrm{Lip}(\overline{\theta}_n)\textrm{Lip}(Q)\|x-y\|+\sum_{n=1}^{\infty}
C\|x-y\|\overline{\theta}_n (Q(y))\\
& &\leq (3\varepsilon\textrm{Lip}(Q)+C)\|x-y\|,
\end{eqnarray*}
so $\overline{g}$ is $(C+3\varepsilon\textrm{Lip}(Q))$-Lipschitz.
And on the other hand,
\begin{eqnarray*}
& & \textrm{Lip}(g-\overline{g})\leq
\sum_{n=1}^{\infty}\textrm{Lip}(Q)\textrm{Lip}(\theta_n
-\overline{\theta}_n)\|g_n\|_{\infty}+\sum_{n=1}^{\infty}\|\theta_n
-\overline{\theta}_n\|_{\infty}2\\
& &\leq \frac{\varepsilon}{2}+\frac{\varepsilon}{2}=\varepsilon,
\end{eqnarray*}
which implies
$$
\textrm{Lip}(g)\leq\textrm{Lip}(\overline{g})+\varepsilon\leq
C+4\varepsilon\textrm{Lip}(Q).
$$
Up to a replacement of $4\varepsilon\textrm{Lip}(Q)$ with
$\varepsilon$, we have thus shown that for every $\varepsilon>0$
and every $1$-Lipschitz function $f:X\to\R$ there exists a
$(C+\varepsilon)$-Lipschitz, real analytic function $g:X\to\R$
such that $|f-g|\leq 2$.

Finally, for arbitrary $\varepsilon \in (0, 2)$ and
$\textrm{Lip}(f):=L\in (0, \infty)$, we consider the function
$F:X\to\R$ defined by
$F(x)=\frac{2}{\varepsilon}f(\frac{\varepsilon}{2L} x)$, which is
$1$-Lipschitz so, by what we have just proved there exists a
$(C+\varepsilon)$-Lipschitz, real analytic function $G:X\to\R$
such that $|F-G|\leq 2$. If we
 define $g(x)=\frac{\varepsilon}{2} G(\frac{2L
}{\varepsilon}x)$, we get a real analytic function $g:X\to\R$ with
$\textrm{Lip}(g) \leq (C+\varepsilon)\textrm{Lip}(f)$, and such
that $|g-f|\leq\varepsilon$. This concludes the proof of Theorem
\ref{Lip approximation on Hilbert space}. $\square$

\begin{remark}
{\em However, note that the neighborhood of $X$ in $\widetilde{X}$
on which a holomorphic extension $\widetilde{g}$ of $g$ is defined
is $\widetilde{U}_{L,
\varepsilon}:=\frac{\varepsilon}{2L}\widetilde{U}$. In particular,
as $L=\textrm{Lip}(f)$ increases to $\infty$ (or as $\varepsilon$
decreases to $0$), the neighborhood $\widetilde{U}_{L,
\varepsilon}$ of $X$ where a holomorphic extension of the
approximation $g$ of $f$ is defined shrinks to $X$.

This unfortunate dependence of $\widetilde{U}_{L, \varepsilon}$ on
$L, \varepsilon$ (which, according to the Cauchy-Riemann
equations, is an inherent disadvantage of any conceivable scaling
procedure, not just the one we have used) prevents our going
further and combining our main result with a real analytic
refinement of Moulis' techniques \cite{M} in order to prove the
following conjecture: that every $C^1$ function defined on a
Banach space having a separating polynomial can be $C^1$-finely
approximated by real analytic functions.

It is no use, either, trying to avoid scaling by choosing $r$ of
the order of $\varepsilon/L$ and reworking the statements and the
proofs of the results following Lemma \ref{construction of the sup
partition of unity} in order to allow $r\in (0,1)$, because in
such case the neighborhood $\widetilde{V}$ where the holomorphic
extensions $\widetilde{\varphi}_n$ of the functions $\varphi_n$
are defined and conveniently bounded will be contained in
$\widetilde{V}_r=\{x+iy
: x, y\in X, \|y\|<r\}$, which also shrinks to $X$ as $r$ decreases to $0$.}
\end{remark}
Despite this difficulty, the real analytic sup partitions of unity
constructed in Lemma \ref{construction of the sup partition of
unity} can be used, together with Theorem \ref{analytic Lip
approximation}, to prove Theorem \ref{derivative approximation},
see \cite{AFK3}. In fact, an analysis of the proof of \cite{AFK3}
(in view of Proposition \ref{refinement of CHP} and following the
lines of Lemma \ref{holomorphic extension of the composition of
lambda with Phi} above) shows that the domain where a holomorphic
extension of the function $g$ in Theorem \ref{derivative
approximation} is defined and $\varepsilon$-close to $g$ only
depends on $\|f\|_\infty$, on $\varepsilon$, and on the modulus of
continuity of $f'$. Namely, we have the following sharp version of
Theorem \ref{derivative approximation}.

\begin{theorem}\label{sharp version of derivative approximation}
Let $X$ be a separable Banach space with a separating polynomial,
and let $M, K, \varepsilon>0$ be given. Then there exists an open
neighborhood $\widetilde{U}$ of $X$ in $\widetilde{X}$, depending
only on $M, K, \varepsilon$, such that for every function $f\in
C^{1,1}(X)$ with $\|f\|_\infty\leq M$ and $\textrm{Lip}(f')\leq K$
there exists a real analytic function $g:X\to\mathbb{R}$, with
holomorphic extension $\widetilde {g}:\widetilde{U}\to\mathbb{C}$,
such that
\begin{enumerate}
\item $|f(x)-g(x)|\leq \varepsilon$ for all $x\in X$.
\item $|f'(x)-g'(x)|\leq\varepsilon$ for all $x\in X$.
\item $|\widetilde{g}(x+iy)-g(x)|\leq \varepsilon$ for all $z=x+iy\in\widetilde{U}$.
\end{enumerate}
\end{theorem}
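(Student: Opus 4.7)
The plan is to revisit the proof of Theorem \ref{derivative approximation} given in \cite{AFK3}, which is a sup-partition-of-unity construction in the spirit of the proof of Theorem \ref{analytic Lip approximation}, enhanced with first-order Taylor polynomials of $f$, and to track the dependence of the holomorphic extension domain on the data $M,K,\varepsilon$ using Proposition \ref{refinement of CHP} in the spirit of Lemma \ref{holomorphic extension of the composition of lambda with Phi}. Fix a dense sequence $\{x_n\}\subset X$ and choose parameters $r=r(M,K,\varepsilon)$ and $\eta=\eta(M,K,\varepsilon)\in(0,1)$ to be determined, then apply Lemma \ref{construction of the sup partition of unity} to obtain a real analytic sup-partition $\{\varphi_n\}=\{\varphi_{n,r,\eta}\}$ with holomorphic extensions $\{\widetilde\varphi_n\}$ on the strip $\widetilde V=W_{\delta_Q}$. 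After an additive shift by a constant $c=c(M,K,\varepsilon)$ that makes the first-order Taylor polynomials
$$T_n(x)\;:=\;f(x_n)+c+f'(x_n)(x-x_n)$$
bounded below by $1$ for all $n$ with $\varphi_n(x)$ not negligible, define
$$g(x)\;=\;\frac{\lambda\bigl(\{T_n(x)\varphi_n(x)\}_{n=1}^\infty\bigr)}{\lambda\bigl(\{\varphi_n(x)\}_{n=1}^\infty\bigr)}\;-\;c,$$
where $\lambda$ is the Preiss norm on $c_0$. Each $T_n$ is affine and extends to an affine holomorphic $\widetilde T_n$ on $\widetilde X$.

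For properties (1) and (2), revisit the $C^0$ computation from the proof of Theorem \ref{analytic Lip approximation} with the constant $f(x_n)$ replaced by the polynomial $T_n(x)$: on $D_Q(x_n,4r)$ Lemma \ref{separating function Q lemma} gives $\|x-x_n\|\le 8r$, so Taylor's theorem yields the quadratic estimate $|T_n(x)-f(x)-c|\le\tfrac{K}{2}(8r)^2=32Kr^2$; off $D_Q(x_n,4r)$ one uses the bound $\varphi_n(x)\le\eta$ from property (4) of Lemma \ref{construction of the sup partition of unity}, combined with the tail decay built into property (5) to control the contribution from indices $n$ for which $|T_n(x)|$ grows linearly in $\|x_n\|$. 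Together this gives $\|g-f\|_\infty=O(Kr^2)+O(\eta)$. For the $C^1$ estimate, write $g+c=N/D$ and differentiate, noting that $T_n'(x)=f'(x_n)$ and that $\|f'(x_n)-f'(x)\|\le 8Kr$ on $D_Q(x_n,4r)$. Exploiting the $1$-homogeneity of $\lambda$ (which gives $\lambda'(y)(y)=\lambda(y)$) together with the same approximation $T_n(x)\approx f(x)+c$ used in the $C^0$ estimate, one verifies that to leading order $g'(x)=f'(x)$, with a remainder of order $O(Kr)+O(\eta\,\mathrm{Lip}(Q)/r)$. Selecting first $r$ and then $\eta$ small in terms of $M,K,\varepsilon$ secures both (1) and (2).

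For property (3) and the uniform holomorphic extension, follow the strategy of Lemma \ref{holomorphic extension of the composition of lambda with Phi}. The map $z\mapsto\{\widetilde T_n(z)\widetilde\varphi_n(z)\}_n$ is holomorphic into $\widetilde c_0$ on a neighborhood of $X$ independent of $f$: on bounded regions of $\widetilde X$ we have $|\widetilde T_n(z)|\le M+c+K(\|x_n\|+O(1))$ (using that $\|f'\|_\infty\le 2\sqrt{KM}$ is controlled by $M,K$ alone), while property (5) of Lemma \ref{construction of the sup partition of unity} yields the decay $|\widetilde\varphi_n(x+z)|\le 1/(n!\,a_x^n M_n)$ with $M_n=e^{2K^2\eta}(1+\|x_n\|)$, so the $(1+\|x_n\|)$ factor built into $M_n$ absorbs the linear growth of $|\widetilde T_n|$ in $\|x_n\|$. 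Apply Proposition \ref{refinement of CHP} to the complex implicit equation
$$\widetilde\psi_f(w,\mu)\;=\;\sum_{n=1}^\infty\bigl(\mu^{-1}\widetilde T_n(w)\widetilde\varphi_n(w)\bigr)^{2n}-1\;=\;0,$$
treating $f$ as the parameter. The upper bound on $|\widetilde\psi_f|$ on a polydisc and the lower bound on $|\partial\widetilde\psi_f/\partial\mu|$ at the base point are verified uniformly in $f$, exactly as in the proof of Lemma \ref{holomorphic extension of the composition of lambda with Phi}, using property (3) of Lemma \ref{construction of the sup partition of unity} and the uniform bounds just established. Applying the same procedure to the denominator and forming the quotient yields a holomorphic extension $\widetilde g=\widetilde N/\widetilde D-c$ defined on a neighborhood $\widetilde U=\widetilde U(M,K,\varepsilon)$ of $X$ that is independent of $f$; estimate (3) then follows from the pointwise bounds furnished by Proposition \ref{refinement of CHP}, just as in part (2) of Lemma \ref{holomorphic extension of the composition of lambda with Phi}.

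The main obstacle will be the meticulous bookkeeping needed to confirm that every bound entering the argument depends only on $M$ and $K$, never on quantities such as $|f(x_n)|$ or $\|f'(x_n)\|$ for specific large $n$. The decisive ingredient is the $(1+\|x_n\|)$ factor in the constant $M_n$ of property (5) of Lemma \ref{construction of the sup partition of unity}, reserved in the present paper's remark for use in \cite{AFK3} precisely for this purpose: it compensates for the linear growth in $\|x_n\|$ of the Taylor polynomial $T_n$ and permits the uniform implicit function theorem in Proposition \ref{refinement of CHP} to yield a domain of holomorphic extension depending only on $M$, $K$, and $\varepsilon$.
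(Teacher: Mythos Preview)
Your proposal is correct and follows essentially the same approach as the paper. Indeed, the paper does not give a self-contained proof of this theorem at all: it merely remarks that an analysis of the proof in \cite{AFK3}, carried out in the light of Proposition~\ref{refinement of CHP} and along the lines of Lemma~\ref{holomorphic extension of the composition of lambda with Phi}, yields the stated uniform dependence on $M$, $K$, $\varepsilon$. Your sketch is precisely that analysis, and you correctly identify the key technical point---that the factor $M_n=e^{2K^2\eta}(1+\|x_n\|)$ in property~(5) of Lemma~\ref{construction of the sup partition of unity}, explicitly reserved in the remark following that lemma for use in \cite{AFK3}, is what absorbs the linear growth of the Taylor polynomials $T_n$ in $\|x_n\|$ and makes the implicit-function bounds uniform in $f$. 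One small correction: the coefficient of $\|x_n\|$ in your bound for $|\widetilde T_n(z)|$ should be $\|f'\|_\infty\le 2\sqrt{KM}$ rather than $K$, but this does not affect the argument since it is still a constant depending only on $M$ and $K$.
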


\section{Proof of Theorem \ref{Lip approximation on Hilbert
space}}

Finally, by combining Theorem \ref{sharp version of derivative
approximation} with the Lasry-Lions sup-inf convolution
regularization technique \cite{LaLi} and with the above gluing
tube function and crown arguments, we will prove Theorem \ref{Lip
approximation on Hilbert space}.

We start considering the special case when $f$ is bounded.

\subsection{Proof of Theorem \ref{Lip approximation on Hilbert
space} in the case of a function $f:X\to [0,M]$.}

Given an $L$-Lipschitz function $f:X\to [0,M]$ defined on a
separable Hilbert space $X$, set
    $$
f_\lambda (x)=\inf\{ f(u) +\frac{1}{2\lambda}|x-u|^2 \, : \, u\in
X\}=\inf\{ f(x-u) +\frac{1}{2\lambda}|u|^2 \, : \, u\in X\}
    $$
$$
f^\mu (x)=\sup\{ f(u) -\frac{1}{2\lambda}|x-u|^2 \, : \, u\in
X\}=\sup\{ f(x-u) -\frac{1}{2\lambda}|u|^2 \, : \, u\in X\}.
    $$
Since the supremum (and the infimum) of a family of $L$-Lipschitz
functions is $L$-Lipschitz, it is clear that $f_\lambda$ and
$f^\mu$ are $L$-Lipschitz.

Now, since $f$ is bounded and uniformly continuous, according to
\cite{LaLi}, the function
    $$
g_{\lambda, \mu}(x):=(f_\lambda)^{\mu}(x)=\sup_{z\in X}\inf_{y\in
X}\{f(y)+\frac{1}{2\lambda}|z-y|^{2}-\frac{1}{2\mu}|x-z|^{2}\}
    $$
is well defined and has a Lipschitz derivative on $X$ satisfying
    $$
\textrm{Lip}(g_{\lambda, \mu}')\leq\max\{\frac{1}{\mu},
\frac{1}{\lambda-\mu}\},
    $$
for all $0<\mu<\lambda$ small enough, and converges to $f(x)$,
uniformly on $X$, as $0<\mu<\lambda\to 0$. In fact, as noted in
\cite{LaLi}, the rate of convergence of $g_{\lambda, \mu}$ to $f$
only depends on $\textrm{Lip}(f)$, so for every $\varepsilon>0$
there exists $\lambda_0>0$ (only depending on $\varepsilon$ and
$L$) so that $|g_{\lambda, \mu}(x)-f(x)|\leq\varepsilon/2$ for all
$x\in X$, $0<\mu<\lambda\leq\lambda_0$.

Also, according to the above observations, this function is
$L$-Lipschitz. Therefore we have
    $$
\|g_{\lambda, \mu}'(x)\|\leq L, \textrm{ and } |f(x)-g_{\lambda,
\mu}(x)|\leq\frac{\varepsilon}{2}
    $$
for all $x\in X$, for some $0<\mu<\lambda$ small enough. Now fix
$\lambda,\mu$ with
    $$
0<\lambda<\lambda_0, \,\,\, \mu:=\frac{\lambda}{2},
    $$
and apply Theorem \ref{derivative approximation} to obtain a real
analytic function $g:X\to\mathbb{R}$ such that
$$|g_{\lambda, \mu}(x)-g(x)|\leq\frac{\varepsilon}{2} \textrm{ and
} |g_{\lambda, \mu}'(x)-g'(x)|\leq\frac{\varepsilon}{2}$$ for all
$x\in X$. By combining the last two inequalities we get
$|f(x)-g(x)|\leq\varepsilon$ for all $x\in X$ and
$\textrm{Lip}(g)\leq L+\varepsilon$. Moreover $g_{\lambda, \mu}$
has a holomorphic extension to a neighborhood $\widetilde{U}$ of
$X$ in $\widetilde{X}$ which only depends on $L$, on
$\textrm{Lip}(g_{\lambda, \mu}')$, on $M$, and on $\varepsilon$.
Since $\textrm{Lip}(g_{\lambda, \mu}')\leq \max\{1/\mu,
1/(\lambda-\mu)\}=2/\lambda$ and in turn $\lambda$ only depends on
$\varepsilon$ and on $\textrm{Lip}(f)\leq L$, we have thus proved
the following.

\begin{proposition}\label{almost preserving Lip approx in case of a function taking values in 01}
Let $X$ be a separable Hilbert space. For every $L, M,
\varepsilon>0$ there exists a neighborhood
$\widetilde{U}:=\widetilde{U}_{L, M, \varepsilon}$ of $X$ in
$\widetilde{X}$ such that, for every $L$-Lipschitz function
$f:X\to [0,M]$ there exists a real analytic function
$g:X\to\mathbb{R}$, with holomorphic extension $\widetilde
{g}:\widetilde{U}\to\mathbb{C}$, such that
\begin{enumerate}
\item $|f(x)-g(x)|\leq \varepsilon$ for all $x\in X$.
\item $g$ is $(L+\varepsilon)$-Lipschitz.
\item $|\widetilde{g}(x+iy)-g(x)|\leq \varepsilon$ for all $z=x+iy\in\widetilde{U}$.
\end{enumerate}
\end{proposition}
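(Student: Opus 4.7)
The plan is to combine the Lasry--Lions sup-inf convolution with the sharp version of the derivative approximation theorem (Theorem \ref{sharp version of derivative approximation}), which is the essential ingredient that makes the domain of holomorphy independent of the particular $f$.

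First, I would regularize $f$ via
$$
g_{\lambda,\mu}(x)=\sup_{z\in X}\inf_{y\in X}\bigl\{f(y)+\tfrac{1}{2\lambda}|z-y|^{2}-\tfrac{1}{2\mu}|x-z|^{2}\bigr\}.
$$
From \cite{LaLi} this function is $C^{1,1}$ with $\textrm{Lip}(g_{\lambda,\mu}')\leq \max\{1/\mu,\,1/(\lambda-\mu)\}$ whenever $0<\mu<\lambda$ is small, and the convergence $g_{\lambda,\mu}\to f$ is uniform on $X$ with a rate depending only on $\textrm{Lip}(f)\leq L$. Since an infimum (respectively supremum) of $L$-Lipschitz functions is $L$-Lipschitz, $f_\lambda$ and $(f_\lambda)^\mu=g_{\lambda,\mu}$ are both $L$-Lipschitz; and since $0\leq f\leq M$ we also get $0\leq g_{\lambda,\mu}\leq M$. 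Thus for any prescribed $\varepsilon>0$ I can choose $\lambda_0=\lambda_0(L,\varepsilon)$ so that $|g_{\lambda,\mu}-f|\leq \varepsilon/2$ uniformly on $X$ whenever $0<\mu<\lambda\leq \lambda_0$.

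Next I fix $\lambda\in(0,\lambda_0)$ and $\mu=\lambda/2$, so that $\textrm{Lip}(g_{\lambda,\mu}')\leq 2/\lambda$ is a bound depending only on $L$ and $\varepsilon$. I then apply Theorem \ref{sharp version of derivative approximation} to $g_{\lambda,\mu}$ with parameters $M$, $K:=2/\lambda$, and tolerance $\varepsilon/2$. This yields a neighborhood $\widetilde U:=\widetilde U_{L,M,\varepsilon}$ of $X$ in $\widetilde X$ (depending only on $M,K,\varepsilon$, hence only on $L,M,\varepsilon$) and a real analytic function $g:X\to\R$ with holomorphic extension $\widetilde g:\widetilde U\to\C$ such that $|g-g_{\lambda,\mu}|\leq\varepsilon/2$ and $\|g'-g_{\lambda,\mu}'\|\leq\varepsilon/2$ on $X$, and $|\widetilde g(x+iy)-g(x)|\leq\varepsilon/2$ on $\widetilde U$.

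Combining these estimates, property (1) follows from $|f-g|\leq|f-g_{\lambda,\mu}|+|g_{\lambda,\mu}-g|\leq\varepsilon$; property (2) follows from $\textrm{Lip}(g)\leq\textrm{Lip}(g_{\lambda,\mu})+\varepsilon/2\leq L+\varepsilon/2<L+\varepsilon$; and property (3) is immediate (after adjusting $\varepsilon$ by a harmless constant). The only nontrivial input is the uniform control over the domain of holomorphy, which is precisely what Theorem \ref{sharp version of derivative approximation} supplies; with that in hand no further obstacle arises, and there is no need to invoke the separating polynomial construction or the gluing tube machinery directly here, since those have already been absorbed into Theorem \ref{sharp version of derivative approximation}.
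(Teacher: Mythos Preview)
Your proposal is correct and follows essentially the same route as the paper: Lasry--Lions regularization to produce a $C^{1,1}$, $L$-Lipschitz approximation $g_{\lambda,\mu}$ with $\textrm{Lip}(g_{\lambda,\mu}')\leq 2/\lambda$, followed by the sharp derivative approximation theorem to obtain the real analytic $g$ with the required uniform control on the domain of holomorphy. The paper cites Theorem~\ref{derivative approximation} and then separately argues that the holomorphic neighborhood depends only on $L$, $\textrm{Lip}(g_{\lambda,\mu}')$, $M$, and $\varepsilon$, which is exactly the content of Theorem~\ref{sharp version of derivative approximation} that you invoke directly; the two presentations are equivalent.
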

In particular this establishes Theorem \ref{Lip approximation on
Hilbert space} in the case when $f$ is bounded.

Now, fix $L=M=1$ and $\varepsilon>0$, and let $r, \delta\in (0,
\varepsilon/64)$ be as in Lemma \ref{tube}. Given a bounded
$1$-Lipschitz function $f:X\to\R$, we may assume that $f$ takes
values in the interval $[0, +\infty)$. Let $N\in\N$ be such that
$N\geq f(x)\geq 0$ for all $x\in X$. By Proposition \ref{almost
preserving Lip approx in case of a function taking values in 01},
we can find real analytic functions $g_1, ..., g_N$, with
holomorphic extensions $\widetilde{g}_1, ... \widetilde{g}_N$
defined on $\widetilde{U}:=\widetilde{U}_{1, 1, \min\{r/2,
\delta\}}\supset X$, such that
\begin{enumerate}
\item $|f_i(x)-g_i(x)|< r/2$ for all $x\in X$.
\item $g_i$ is $(1+\varepsilon)$-Lipschitz.
\item $|\widetilde{g_i}(x+iy)-g_i(x)|\leq \delta$ for all $z=x+iy\in\widetilde{U}$.
\end{enumerate}

Let us define $g=e_{1}^{*}\circ G^{-1}
\circ\{g_i\}_{i=1}^{\infty}=H\circ\{g_i\}_{i=1}^{\infty}$, where
we understand $g_i=0$ for all $i>N$.

Since $\textrm{Lip}(g_i)\leq 1+\varepsilon$ for all $i$, and $DH$
is bounded by $1+\varepsilon$, it follows that
$g=H\circ\{g_i\}_{i=1}^{\infty}$ is $(1+\varepsilon)^2$-Lipschitz.

Because $\left(  f_{1}\left(  x\right)  ,\cdots, f_{N}\left(
x\right), 0, 0, \cdots \right)  \in\beta\left(  \left[  0,N\right]
\right) ,$ and also $\left\vert f_{i}-g_{i}\right\vert <r/2,$ we
have $\left( g_{1}\left(  x\right) ,\cdots, g_{N}\left(  x\right),
0, 0, \cdots \right)  \in \mathcal{T}$ and, by property $(2)$ of
the preceding Lemma,
    $$
|g(x)-f(x)|=|H(\{g_n(x)\}_{n=1}^{\infty})-h(\{f_n(x)\}_{n=1}^{\infty})|\leq
\varepsilon.
    $$
The function $g$ is clearly real analytic, with holomorphic
extension $\widetilde{g}(z)=(\widetilde{e}_{1}^{*}\circ
\widetilde{G^{-1}})(\widetilde{g}_{1}(z), \widetilde{g}_{2}(z),
..., \widetilde{g}_{N}(z), 0, 0, ...)$ defined on $\widetilde{U}$.
And, because $|\widetilde{g_i}(x+iy)-g_i(x)|\leq \delta$ for all
$z=x+iy\in\widetilde{U}$, we have, using property $(5)$ of the
Lemma, that $|\widetilde{g}(x+iy)|\leq 2(|g(x)|+ 1)$ for all
$x+iy\in\widetilde{U}$.

Up to a change of $\varepsilon$ we have thus proved the following.

\begin{proposition}
Let $X$ be a separable Hilbert space. For every $\varepsilon>0$
there exists an open neighborhood
$\widetilde{U}=\widetilde{U}_{\varepsilon}$ of $X$ in
$\widetilde{X}$ such that, for every $1$-Lipschitz, bounded
function $f:X\to\R$, there exists a real analytic function
$g:X\to\mathbb{R}$, with holomorphic extension $\widetilde
{g}:\widetilde{U}\to\mathbb{C}$, such that
\begin{enumerate}
\item $|f(x)-g(x)|\leq \varepsilon$ for all $x\in X$.
\item $g$ is $(1+\varepsilon)$-Lipschitz.
\item $|\widetilde{g}(x+iy)|\leq 2(|g(x)|+ 1)$ for all $z=x+iy\in\widetilde{U}$.
\end{enumerate}
\end{proposition}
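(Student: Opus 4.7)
The plan is to assemble the statement from two tools already proved: Proposition~\ref{almost preserving Lip approx in case of a function taking values in 01}, which provides a real analytic approximation of a $[0,M]$-valued function with Lipschitz constant close to that of $f$ but on a neighborhood whose size depends on $M$, and the real analytic gluing tube function $H$ of Lemma~\ref{tube}, which allows a bounded function to be encoded by a sequence of functions taking values in $[0,1]$. The key observation is that by slicing $f$ into unit-height pieces we invoke Proposition~\ref{almost preserving Lip approx in case of a function taking values in 01} only with $L=M=1$, so the neighborhood obtained depends only on $\varepsilon$, not on $\|f\|_\infty$.

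Given $\varepsilon>0$, the first step is to choose $r,\delta\in(0,\varepsilon/64)$ as in Lemma~\ref{tube}, producing $H=\alpha\circ e_1^{*}\circ G^{-1}$ on the twisted tube $\mathcal{T}$ with $\|DH\|\leq 1+\varepsilon$ and a holomorphic extension $\widetilde{H}$ controlled on $\widetilde{\mathcal{T}_\delta}$. Given a bounded $1$-Lipschitz $f:X\to\R$, translate so that $f\geq 0$, pick $N\in\N$ with $f\leq N$, and form the slices $f_n$ as in the construction preceding the present subsection; each $f_n$ is $1$-Lipschitz with values in $[0,1]$, vanishes for $n>N$, and $f=h\circ\{f_n\}$ where $h=\beta^{-1}$. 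Then apply Proposition~\ref{almost preserving Lip approx in case of a function taking values in 01} with $L=M=1$ and tolerance $\min\{r/2,\delta\}$ to obtain a \emph{single} neighborhood $\widetilde{U}:=\widetilde{U}_{1,1,\min\{r/2,\delta\}}$ and real analytic $(1+\varepsilon)$-Lipschitz approximations $g_n$ with holomorphic extensions $\widetilde{g}_n:\widetilde{U}\to\C$ satisfying $|g_n-f_n|<r/2$ on $X$ and $|\widetilde{g}_n(x+iy)-g_n(x)|\leq\delta$ on $\widetilde{U}$; set $g_n\equiv 0$ for $n>N$. Finally define $g:=H\circ(g_1,\ldots,g_N,0,\ldots)$ with holomorphic extension $\widetilde{g}:=\widetilde{H}\circ(\widetilde{g}_1,\ldots,\widetilde{g}_N,0,\ldots)$ on $\widetilde{U}$.

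The verifications are then routine. The mapping $x\mapsto(g_1(x),\ldots,g_N(x),0,\ldots)\in\ell_\infty$ is $(1+\varepsilon)$-Lipschitz because the sup-norm is the maximum of the coordinate Lipschitz constants; composing with $H$, which has derivative bounded by $1+\varepsilon$ on the convex hull of its values along $X$, gives $\textrm{Lip}(g)\leq(1+\varepsilon)^2$, which reduces to $1+\varepsilon$ after a final rescaling $\varepsilon\mapsto\varepsilon/3$. Since $(f_1(x),\ldots,f_N(x),0,\ldots)\in\beta([0,N])$ and $|g_n-f_n|<r/2$, the point $(g_1(x),\ldots,g_N(x),0,\ldots)$ lies in $\mathcal{T}$ within sup-distance $r/2$ of $\beta(f(x))$, so property~(2) of Lemma~\ref{tube} yields $|g(x)-f(x)|\leq\varepsilon$. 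The bound $|\widetilde{g}(x+iy)|\leq 2(|g(x)|+1)$ on $\widetilde{U}$ comes directly from property~(5) of Lemma~\ref{tube}, because $|\widetilde{g}_n(x+iy)-g_n(x)|\leq\delta$ keeps the argument of $\widetilde{H}$ inside $\widetilde{\mathcal{T}_\delta}$.

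The only genuine difficulty, which is already absorbed into the lemmas, is ensuring that the neighborhood obtained is independent of $\|f\|_\infty$ while simultaneously achieving a Lipschitz constant arbitrarily close to $1$; if one used Theorem~\ref{analytic Lip approximation} in place of Proposition~\ref{almost preserving Lip approx in case of a function taking values in 01}, one would pick up a large fixed constant $C$ at the gluing stage. Here the Lasry--Lions-based Proposition~\ref{almost preserving Lip approx in case of a function taking values in 01} delivers $(1+\varepsilon)$-Lipschitz approximations slice-by-slice, and the near-isometric nature of $H$ (namely $\|DH\|\leq 1+\varepsilon$ from Lemma~\ref{tube}(2')) lets the product of the two factors remain $(1+\varepsilon)^2$, small enough to be re-absorbed into $\varepsilon$.
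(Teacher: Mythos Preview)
Your proposal is correct and follows essentially the same approach as the paper: slice $f$ into unit-height pieces $f_n$, approximate each by Proposition~\ref{almost preserving Lip approx in case of a function taking values in 01} with $L=M=1$ on the fixed neighborhood $\widetilde{U}_{1,1,\min\{r/2,\delta\}}$, glue via $H$ from Lemma~\ref{tube}, and verify (1)--(3) from properties (2$'$) and (5) of that lemma, absorbing the $(1+\varepsilon)^2$ Lipschitz constant into $\varepsilon$. The only cosmetic remark is that the approximation estimate $|H(x)-t|\leq\varepsilon$ is strictly speaking property~(2$'$) rather than~(2), and the Lipschitz bound on $g$ comes from bounding $Dg$ pointwise on the convex domain $X$ (not from $H$ being Lipschitz on $\mathcal{T}$, which it is not).
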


Then, by using exactly the same argument as in section 5.6 above
(just noting that now we are lucky to have $C=1+\varepsilon$), one
immediately obtains Theorem \ref{Lip approximation on Hilbert
space} in full generality.

\bigskip


\end{document}